\definecolor{linkcol}{rgb}{0,0,0.4} 
\definecolor{citecol}{rgb}{0.5,0,0}
\definecolor{urlcol}{rgb}{0,0,0.75}
\theoremstyle{definition}
\newtheorem{definition}{Definition}
\newtheorem{remark}{Remark}
\newtheoremstyle{mytheorem}{0.5cm}{0.2cm}{\slshape}{ }{\bfseries}{.}{ }{}
\theoremstyle{mytheorem}
\newtheorem{theorem}{Theorem}
\newtheorem{lemma}{Lemma}
\newtheorem{proposition}{Proposition}
\newtheorem{corollary}{Corollary}
\begin{document}

\title{Random Measurable Sets and Covariogram Realisability Problems\thanks{This is a preprint version of the paper that has been accepted in \textit{Advances in Applied Probability}.}}

\author{Bruno Galerne and Rapha\"{e}l Lachi\`{e}ze-Rey\thanks{Laboratoire MAP5 (UMR CNRS 8145), Université Paris Descartes, Sorbonne Paris Cité ; bruno.galerne@parisdescartes.fr ; raphael.lachieze-rey@parisdescartes.fr ;} }

\maketitle

\begin{abstract}
We provide a characterization of realisable  set covariograms, bringing a rigorous yet abstract solution to the  $S_2$ problem in materials science. Our method is based   on the covariogram functional for random measurable sets (RAMS) and on a result about the representation of positive operators on a non-compact space. 
RAMS are an alternative to the classical   random closed sets in stochastic geometry and geostatistics,  they provide a weaker framework allowing to manipulate more irregular functionals, such as the perimeter.  We therefore use the illustration provided by the $S_{2}$ problem to advocate the use of RAMS for solving theoretical problems of geometric nature.   Along the way, we extend the theory of random measurable sets, and in particular the local approximation of the perimeter by local covariograms.
\end{abstract}

{\it Keywords:} {Random measurable sets; Realisability; $S_{2}$ problem; Covariogram; Perimeter; Truncated moment problem} 

{\it MSC2010 subject classification:} Primary 60D05; Secondary 28C05

\renewcommand{\L}{\mathcal{L}}
\newcommand{\E}{\mathbf{E}}
\newcommand{\R}{\mathbb{R}}
\newcommand{\C}{\mathbb{C}}
\newcommand{\Z}{\mathbb{Z}}
\newcommand{\N}{\mathbb{N}}
\newcommand{\Q}{\mathbb{Q}}
\newcommand{\EE}{\mathcal{X}}
\newcommand{\Prob}[1]{\mathbf{P}\{#1\}}
\newcommand{\one}{\mathbbm{1}}
\newcommand{\Ind}{\mathbbm{1}} 
\newcommand{\onev}{{\mathbf{1}}}
\newcommand{\card}{card}
\newcommand{\dmet}{\mathsf{d}}
\renewcommand{\P}{\mathbf{P}}
\newcommand{\PerB}{\Per_{\mathbf{B}}}
\newcommand{\LipB}{\Lip_{\mathbf{B}}}
\newcommand{\sB}{\mathcal{B}}
\newcommand{\sC}{\mathcal{C}}
\newcommand{\sK}{\mathcal{K}}
\newcommand{\sF}{\mathcal{F}}
\newcommand{\F}{\mathcal{F}}
\newcommand{\sM}{\mathcal{M}}
\newcommand{\M}{\mathcal{M}} 
\newcommand{\sX}{\mathbb{X}}    
\newcommand{\sXX}{\sX\times\sX}
\newcommand{\sN}{\mathcal{N}}
\newcommand{\sYY}{\mathbb{Y}}
\newcommand{\sR}{\mathcal{R}}
\newcommand{\sP}{\mathcal{P}}
\newcommand{\loc}{{\operatorname{loc}}}
\newcommand{\Lip}{\operatorname{Lip}}
\newcommand{\W}{\mathcal{W}} 

 \newcommand{\dom}{\operatorname{dom}}
\renewcommand{\H}{\mathcal{H}}
\newcommand{\B}{\mathcal{B}}
\newcommand{\K}{\mathcal{K}}

\newcommand{\Cc}{\mathsf{C}_{\mathrm{o}}}
\newcommand{\fnc}{\Phi}
\newcommand{\gspace}{\mathsf{G}}
\newcommand{\espace}{\mathsf{E}}
\newcommand{\Vspace}{\mathsf{V}}
\newcommand{\Hspace}{\mathsf{H}}

\newcommand{\fF}{\mathfrak{F}}
\newcommand{\leb}{\mathcal{L}^d}
\newcommand{\eps}{\varepsilon}
\renewcommand{\phi}{\varphi}
\renewcommand{\kappa}{\varkappa}

\newcommand{\SymD}{\Delta }

\newcommand{\supp}{{\rm{supp}}}
\newcommand{\Sd}{\mathcal{S}^{d-1}}
\newcommand{\Per}{{\operatorname{Per}}}

\newcommand{\thf}[1][1]{{\textstyle\frac{#1}{2}}}
\newcommand{\pack}[1]{P_{#1}(\sX)}
\newcommand{\packs}[2]{P_{#1}(#2)}
 \newcommand{\eae}{\stackrel{a.e}{=}}
\newcommand{\Var}{{\rm Var}}
\newcommand{\RM}{regularity modulus}

 \section{Framework and main results}
\label{sec:framework}

\subsection{Introduction}

An old and difficult problem in materials science is the $S_{2}$ problem, often posed in the following terms:
Given a real function $S_{2} :\mathbb{R}^{d}\rightarrow [0,1]$, is there a stationary random  set $X\subset \mathbb{R}^{d}$ whose standard two point correlation function is $S_{2}$, that is, such that 
\begin{equation}
\label{eq:S2-definition}
\P(x,y\in X)=S_{2}(x-y),\;x,y\in \mathbb{R}^{d}~?
\end{equation}
The $S_{2}$ problem is a \emph{realisability problem} concerned with the existence of a (translation invariant) probability measure satisfying some prescribed marginal conditions. 

This question is the stationary version of the problem of characterizing functions $S(x,y)$ satisfying 
$$
\begin{aligned}
S(x,y)=\P(x,y\in X)=\E \Ind_{X}(x)\Ind_{X}(y).
\end{aligned}
$$
The right-hand term is the second order moment of the random indicator field $x\mapsto \Ind_{X}(x)$, which justifies the term of \emph{realisability problems}, concerned with the existence of a positive measure satisfying some prescribed moment conditions.

One can see the $S_{2}$ problem as a truncated version of the general \emph{moment problem} that deals with the existence of a process for which all moments are prescribed.
The main difficulty in only considering the moments up to some finite order is that this sequence of moments does not uniquely determine the possible solution.
The appearance of second order realisability problems for random sets goes back to the 1950's, see for instance~\cite{McM55} in the field of telecommunications. 
There are applications in materials science and geostatistics, and marginal problems in general are present under different occurrences in fields as various as quantum mechanics, computer science, or game theory, see the recent work~\cite{FriCha13} and references therein.

Reconstruction of heterogeneous materials from a knowledge of limited microstructural information (a set of lower-order correlation functions) is a crucial issue in many applications. Finding a constructive solution to the realisability problem described above should allow one to test whether an estimated covariance indeed corresponds to a random structure, and propose an adapted reconstruction procedure. 
Studying this problem can serve many other purposes, especially in spatial modeling, where one needs to know necessary admissibility conditions to propose new covariance models.
A series of works by Torquato and his coauthors in the field of materials science gathers known necessary conditions and illustrate them for many 2D and 3D theoretical models, along with reconstruction procedures, see~\cite{JiaStiTor07} and the survey~\cite[Sec.  2.2]{Tor02} and references therein.
This question was developed alongside in the field of geostatistics, where some authors do not tackle directly this issue, but address the realisability problem within some particular classes of models, e.g. Gaussian, mosaic, or Boolean model, see~\cite{Mas72, ChiDel99, Lantuejoul_geostatistical_simulation_2002, Eme10}.

A related question concerns the \textit{specific covariogram} of a stationary random set $X$, defined for all non empty bounded open sets $U\subset\R^d$ by
\begin{equation}
\gamma^s_{X}(y) 
= \frac{\E \leb(X\cap (y+X)\cap U)}{\leb(U)}
= \E \leb(X\cap (y+X)\cap (0,1)^{d}),
\label{eq:def_specfic_covariogram}
\end{equation}
where $\leb$ denotes the Lebesgue measure on $\R^d$.
The associated realisability problem, which consists in determining  whether there exists a stationary random set $X$ whose specific covariogram is a given function, is the \emph{(specific) covariogram realisability problem}.
Note that a straightforward Fubini argument gives that for any stationary random closed set $X$
\begin{equation}
\label{eq:equiv-2p-covariogram}
\gamma^{s}_{X}(y)=\int_{(0,1)^{d}}\P(x\in X,x-y\in X)dx=S_{2}(-y)=S_{2}(y),
\end{equation}
and thus the $S_2$ realisability problem and the specific covariogram problem are fundamentally the same.

Our main result provides an abstract and fully rigorous characterization of this problem for random measurable sets (RAMS) having locally finite mean perimeter.
Furthermore, in the restrictive one-dimensional case ($d=1$), results can be passed on to the classical framework of random closed sets.
It will become clear in this paper why the covariogram approach in the framework of random measurable sets is more adapted to a rigorous mathematical study.
Random measurable sets are an alternative to the classical random closed sets in stochastic geometry and geostatistics,
they provide a weaker framework allowing to manipulate more irregular functionals, such as the perimeter.
We therefore use the illustration provided by the $S_{2}$ problem to advocate the use of RAMS for solving theoretical problems of geometric nature.
Along the way, we extend the theory of random measurable sets, and in particular the local approximation of the perimeter by local covariograms.  
Remark that the framework of RAMS is related to the one of ``random sets of finite perimeter'' proposed recently by Rataj~\cite{Rataj_random_sets_of_finite_perimeter_2014}. 
However it is less restrictive since RAMS do not necessarily have finite perimeter.

Our main result uses a fundamental relation between the Lipschitz property of the covariogram function of a random set, and the finiteness of its mean variational perimeter, unveiled in \cite{Galerne_computation_perimeter_covariogram_2011}.
Like in~\cite[Th. 3.1]{LacMol} about point processes, 
we prove that the realisability of a given function $S_{2} : \mathbb{R}^{d}\rightarrow \R$ can be characterized by two independent conditions : a positivity condition, and a regularity condition, namely the Lipschitz property of $S_{2}$.
The positivity condition deals with the positivity of a linear operator extending $S_2$ on an appropriate space, and is of combinatorial nature.
The proof of this main result relies on a theorem dealing with positive operators on  a non-compact space recently derived in~\cite{LacMol} to treat realisability problems for point processes.
This general method therefore proves here its versatility by being applied in the framework of random sets in a very similar manner.

Checking whether $S_2$ satisfies the positivity condition is completely distinct from the concerns of this paper.
It is a difficult problem that has a long history.
It is more or less implicit in many  articles, and has been, to the best of the authors' knowledge, first  addressed directly by Shepp~\cite{She63}, later on by Matheron \cite{Mat93}, and more recently in~\cite{Qui08,Lac13b}. It is equivalent to the study of the \emph{correlation polytope} in the discrete geometry literature, see for instance the works of  Deza and Laurent~\cite{DezLau97}.
Still, a deep mathematical understanding of the problem remains out of reach.

The plan of the paper is as follows.
We give in the subsections below a quick overview of the mathematical objects involved here, namely random measurable sets, positivity, perimeter, and realisability problems, and we also state the main result of the paper dealing with the specific covariogram realisability problem for stationary random measurable sets with finite specific perimeter.
In Section~\ref{sec:rams}, we develop the theory of random measurable sets, define different notions of perimeter, and explore the relations with random closed sets, while Section~\ref{sec:perimeter_approximation} is devoted to the local covariogram functional and its use for perimeter approximation.
In Section~\ref{sec:proof}, we give the precise statement and the proof of the main result.
We also show that our main result extends to the framework of one-dimensional stationary RACS.

\subsection{Random measurable sets and variational perimeter}

Details about random measurable sets are presented in Section~\ref{sec:rams}, and we give here the essential notation for stating the results. 
Call $\sM$ the class of Lebesgue measurable sets of $\mathbb{R}^d$. 
A \textit{random measurable set} (RAMS) $X$ is a random variable taking values in $\sM$ endowed with the Borel $\sigma$-algebra induced by the local convergence in measure, which corresponds to the $L^1_\loc(\R^d)$-topology for the indicator functions, see Section~\ref{subsec:definition_rams} for details.
Remark that under this topology, one is bound to identify two sets $A$ and $B$ lying within the same Lebesgue class (that is, such that their symmetric difference $A\SymD B$ is Lebesgue-negligible), and we indeed perform this identification on $\sM$.
Say furthermore that a RAMS is \textit{stationary} if its law is invariant under translations of $\mathbb{R}^{d}$.
 
One geometric notion that can be extended to RAMS is that of perimeter.
For a deterministic measurable set $A$,
the perimeter of $A$ in an open set $U\subset\R^d$ is defined as the variation of the indicator function $\Ind_{A}$ in $U$, that is,
\begin{equation}
\label{eq:def-per-intro}
\Per(A;U) = 
\sup\left\{ \int_{U} \Ind_{A} (x) \operatorname{div}\varphi(x)dx:
~\varphi\in\mathcal{C}^1_c(U,\R^d),~\|\varphi(x)\|_2 \leq 1~\text{for all $x$} \right\},
\end{equation}
where $\mathcal{C}^1_c\left(U,\R^d\right)$
denotes the set of continuously differentiable functions $\varphi:U\rightarrow \R^d$ with compact support and $\|\cdot \|_{2}$ is the Euclidean norm~\cite{Ambrosio_Fusco_Pallara_functions_of_bounded_variation_free_discontinuity_problems_2000}, see Section~\ref{sec:rams-finite-perimeter} for a discussion and some properties of variational perimeters.
If $X$ is a RAMS, then, for all open sets $U\subset\R^d$, $\Per(X;U)$ is a well-defined random variable because the map $A\mapsto \Per(A;U)$ is lower semi-continuous for the local convergence in measure in $\R^d$~\cite[Proposition 3.38]{Ambrosio_Fusco_Pallara_functions_of_bounded_variation_free_discontinuity_problems_2000}.
Besides, if $X$ is stationary, then $U\mapsto \E(\Per(X;U))$ extends into a translation-invariant measure, and thus proportional to the Lebesgue measure.
One calls \textit{specific perimeter} or (\textit{specific variation}~\cite{Galerne2014_rfbv}) of $X$ the constant of proportionality that will be denoted by $\Per^{s}(X)$ and that is given by $\Per^{s}(X) = \E \Per(X;(0,1)^{d})$.
We refer to~\cite{Galerne2014_rfbv} for the computation of the specific perimeter of some classical random set models (Boolean models and Gaussian level sets).

\subsection{Covariogram realisability problems}

For a deterministic set $A$, one calls \textit{local covariogram} of $A$ the map
\begin{equation}
\label{eq:local-cov}
\begin{array}{ccl}
  \delta_{y;W}(A)  = \L^d(A\cap(y+A)\cap W), (y,W)\in \R^d\times \W,
\end{array}
\end{equation}
where $\W$ denotes the set of \textit{observation windows} defined by
$$
\W = \left\{ W\subset\R^d~\text{bounded open set such that}~\L^d(\partial W)=0 \right\}.
$$
Given a RAMS $X$, we denote by $\gamma_X(y;W) = \E \delta_{y;W}(X)$ the \textit{(mean) local covariogram} of $X$.
If $X$ is stationary, then the map $W\mapsto \gamma_X(y;W)$ is translation invariant and extends into a measure proportional to the Lebesgue measure.
Hence, one calls \textit{specific covariogram} of $X$ and denotes by $y\mapsto \gamma^s_X(y)$,
the map such that $\gamma_X(y;W) = \E \delta_{y;W}(X) = \gamma^s_X(y) \L^d(W)$.
Note that one simply has $\gamma^s_X(y) = \gamma_X(y,(0,1)^d)$.

We are interested in this paper in the specific covariogram realisability problem:
Given a function $S_2 : \mathbb{R}^{d} \rightarrow \R$, does there exists a stationary random measurable set $X\in \M $ such that
$S_2(y)  = \gamma^{s} _{X}(y)$ for all  $y\in \mathbb{R}^{d}$ ?

The specific covariogram candidate $S_2$ has to verify some structural necessary condition to be realisable.

%
%
 
\begin{definition}[Covariogram admissible functions]
\label{def:covariogram_admissible_measurable}
A function $\gamma : \R^d\times \W\rightarrow \R$ is said to be \emph{$\M $-local covariogram admissible}, or just \emph{admissible},
if for all 5-tuples $(q\geq 1, (a_i)\in \mathbb{R}^{q}  , (y_i)\in (\mathbb{R}^{d})^{q} , (W_i)\in \W^q, c\in  \mathbb{R})$,
$$
\left[\forall A\in \M,\quad c+\sum_{i=1}^{q}a_{i}\delta_{y_{i}; W_i}(A)\geq 0\right]
\Rightarrow
c+\sum_{i=1}^{q}a_{i}\gamma(y_i;W_i)\geq 0.
$$
A function $S_2:\R^d \rightarrow \R$ is said to be \emph{$\M $-specific covariogram admissible}, or just \emph{admissible}, if
the function $(y;W)\mapsto S_2(y) \L^d(W)$ is \emph{$\M $-local covariogram admissible}. 
\end{definition}

It is an immediate consequence of the positivity and linearity of the mathematical expectation that a realisable  $S_2$ function is necessarily admissible.
Checking whether a given $S_2$ is admissible, a problem of combinatorial nature, is difficult. It will not be addressed here, but as emphasized in equation \eqref{eq:equiv-2p-covariogram}, it is directly related  to the positivity problem for two-point covering functions, which is studied in numerous works, see \cite{DezLau97,She63,Mat93,Qui08,Lac13b}, and references therein.
Remark that being admissible is a strong constraint on $S_2$ that conveys the usual properties of covariogram functions, and in particular
$S_2(y)\geq 0$ for all $y\in\R^d$ (since for all $y\in\R^d$, $W\in\W$ and $A\in\M $, $\delta_{y;W}(A)\geq 0$).

In general, the admissibility of $S_{2}$ is not sufficient for $S_2$ to be realisable.
Consider the linear operator $\Phi$
\begin{equation}
\label{eq:Phi-def}
\Phi \left(c+\sum_{i=1}^{q}a_{i}\delta _{y_{i};W_{i}}\right)=c+\sum_{i=1}^{q}a_{i}S_2  (y_{i})\leb(W_{i}) 
\end{equation}
on the subspace of functionals on $\M $ generated by the constant functions and the covariogram evaluations $A\mapsto \delta_{y;W}(A)$, $y\in\R^d$, $W\in \W$.
The realisability of $S_{2}$ corresponds to the existence of a probability measure $\mu$ on $\M$ representing $\Phi$, i.e. such that 
$\Phi(g)=\int_{\M} g d\mu$ for $g$ in the aforementioned subspace.
In a non-compact space such as   $\M$, the positivity of $\Phi $, i.e. the admissibility of $S_{2}$, is not sufficient to represent it by a probability measure, as the $\sigma $-additivity  is also needed.

It has been shown in~\cite{LacMol} that in such non-compact frameworks, the realisability problem should better be accompanied with an additional regularity condition formulated in terms of a function called a \emph{regularity modulus}, see Section~\ref{sec:proof} for details. The perimeter function fulfills this role here, mostly because it can be approximated by linear combinations of covariograms, and has compact level sets.
The well-posed realisability problem with regularity condition we consider here deals with the existence of a stationary random measurable set $X\in \M$ such that 
$$
\begin{cases}
S_2(y) = \gamma^{s}_{X}(y),\quad y\in\R^d,\\
\Per^{s}(X) = \E \Per(X;(0,1)^{d}) < \infty.
\end{cases} 
$$

The main result of this paper is the following.
\begin{theorem}
\label{thm:intro-result}
Let $S_2:\R^d \mapsto \R$ be a function.
Then $S_2$ is the specific covariogram of a stationary random measurable set $X\in \M $ such that $\Per^{s}(X)<\infty$
if and only if $S_2$  is admissible and Lipschitz at $0$ along the $d$ canonical directions.
\end{theorem}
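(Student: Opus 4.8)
The plan is to treat the two implications separately, the necessity being short and the sufficiency being the substantial part built on the abstract representation theorem of \cite{LacMol}. For necessity, suppose $S_2 = \gamma^{s}_{X}$ for a stationary RAMS $X$ with $\Per^{s}(X) < \infty$. Admissibility is then immediate: for any $5$-tuple as in Definition~\ref{def:covariogram_admissible_measurable}, if $c + \sum_i a_i \delta_{y_i;W_i}(A) \geq 0$ for every $A \in \M$, then taking expectations over $X$ and using linearity together with $\gamma_X(y_i;W_i) = S_2(y_i)\leb(W_i)$ yields $c + \sum_i a_i S_2(y_i)\leb(W_i) \geq 0$. The Lipschitz property at $0$ is where the perimeter enters: using the identity relating the covariogram decrement $\leb(A)-\delta_{y;W}(A)$ to the symmetric difference $A\SymD(y+A)$, together with the approximation of directional variation by covariogram increments established in Section~\ref{sec:perimeter_approximation} (following \cite{Galerne_computation_perimeter_covariogram_2011}), I would bound $S_2(0) - S_2(re_i)$ by $\tfrac{r}{2}$ times the mean directional variation of $X$, which is in turn controlled by $\Per^{s}(X)$. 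Hence the difference quotients stay bounded and $S_2$ is Lipschitz at $0$ in each canonical direction.

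For sufficiency I would realise $S_2$ as a representing measure of the positive functional $\Phi$ of~\eqref{eq:Phi-def}. Admissibility of $S_2$ is exactly the positivity of $\Phi$ on the subspace $V$ spanned by the constant functionals and the covariogram evaluations $\delta_{y;W}$. On the non-compact space $\M$ positivity alone is insufficient, so I would invoke the abstract theorem of \cite{LacMol}, whose hypothesis is the existence of a \emph{regularity modulus}: a functional with compact sublevel sets (for the local convergence in measure) that is approximable from below by elements of $V$ and on which $\Phi$ is finite. The natural candidate is the perimeter $A \mapsto \Per(A;U)$: its sublevel sets are compact by the compactness theorem for functions of bounded variation (using, if needed, a window-exhausting version to get compactness in the local topology on all of $\R^d$), and by the perimeter-approximation results of Section~\ref{sec:perimeter_approximation} it is a monotone limit of covariogram combinations, so its $\Phi$-value is the corresponding limit of $S_2$-increments.

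The Lipschitz hypothesis at $0$ is precisely what guarantees this limit is finite, i.e. $\Phi(\Per) < \infty$. With the regularity modulus in hand, the abstract theorem produces a measure $\mu$ on $\M$ representing $\Phi$; the normalisation $\Phi(1)=1$ ensures $\mu$ is a probability measure, and $\Phi(\Per)<\infty$ forces $\mu$-almost every set to have finite perimeter and the mean perimeter to be finite. Verifying afterwards that $\gamma^{s}_{X}=S_2$ holds for \emph{every} $y$, and not merely for the $y$ appearing in the construction, is a routine density/continuity check once the representation is secured.

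It remains to upgrade the representing measure to a \emph{stationary} law, and this I expect to be the main obstacle. Since $\Phi$ only sees $S_2$ through the difference $y$ and the window $W$, it is translation invariant, and the regularity modulus (perimeter) is translation invariant as well; I would therefore either carry out the representation within the translation-invariant category, or else stationarise the obtained measure by a spatial averaging argument, checking in the latter case that both the specific covariogram and the finiteness of the specific perimeter are preserved under this operation. The delicate points are the exact compatibility between the window-dependent local functional and the specific (per-unit-volume) normalisation, and ensuring the abstract representation can be performed so as to yield genuine stationarity rather than only a translation-invariant mean covariogram; this is where I expect the real work to lie.
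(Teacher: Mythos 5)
Your overall strategy coincides with the paper's: necessity via positivity of the expectation plus the identity $\Lip_j(\gamma^s_X,0)=\tfrac12 V^s_{e_j}(X)$ (Proposition~\ref{prop:specific_covariogram_directional_lipschitz_constant}), and sufficiency via the representation theorem of~\cite{LacMol} with a variational perimeter as regularity modulus, followed by a stationarisation step. The necessity half is essentially complete. But in the sufficiency half there is a genuine gap at the central step. The hypothesis of Theorem~\ref{thm_lacmol_realisability} is not ``$\Phi$ extends finitely to the regularity modulus''; it is the sup--inf condition~\eqref{eq:sup-inf-condition}, $\sup_{g\in G}\inf_{A\in\M}\bigl(\chi(A)-g(A)\bigr)+\Phi(g)\leq r$, which couples $\chi$, $g$ and $\Phi$ for \emph{every} $g$ in the test space. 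Since $\chi\notin G$, one cannot simply write $\inf_A(\chi-g)(A)\leq\Phi(\chi-g)$ and conclude. The paper's proof of Theorem~\ref{thm:realisability_over_Rd_perBbeta} gets around this by (i) restricting to the pixelized test functionals $G_n$, (ii) showing that on pixelized sets the anisotropic perimeter is \emph{exactly} a finite linear combination of local covariograms (Proposition~\ref{prop:per_approx_local_covariogram} and Lemma~\ref{lem:weighted_perimeter_gnp_functional-bis}), and (iii) proving that any such combination attains its infimum on a pixelized set, via the extreme points of the correlation polytope (Lemma~\ref{lem:extrema_local_covariogram}); only then does positivity of $\Phi$ yield $\inf_A(g_{n,p}-g)(A)\leq\Phi(g_{n,p}-g)$ and hence the bound by $r$. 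None of this discretization machinery appears in your sketch, and without it the appeal to~\cite{LacMol} does not go through.

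Two further points. First, the regularity modulus cannot be $\Per(\cdot;U)$ for a single window: bounded perimeter in one $U$ gives no compactness in $L^1_{\loc}(\R^d)$, which is why the paper uses the weighted sum $\PerB^\beta=\sum_n\beta_n\PerB(\cdot;U_n)$. You note this parenthetically, but it has a consequence you miss: $\PerB^\beta$ is \emph{not} translation invariant (the weights break invariance), only sub-invariant in the sense of~\eqref{eq:sub-stationarity}, and this is precisely what forces the paper to prove the variant Theorem~\ref{thm:markov-kakutani} of the Markov--Kakutani argument rather than working ``within the translation-invariant category''. Your alternative of spatial averaging is plausible in principle, but you would still need tightness of the averaged laws, which again rests on the sub-invariant modulus; as written, the stationarisation step is acknowledged but not carried out. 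Second, minor: the statement concerns $\Per^s(X)<\infty$ while the machinery controls $\PerB^s(X)$; the equivalence~\eqref{eq:relation-Per-PerB} should be invoked explicitly to close the loop.
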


This result is analogous to the one obtained in~\cite{LacMol} for point processes, since the realisability condition is shown to be a positivity condition plus a regularity condition, namely the Lipschitz property of $S_2$.
As already discussed, a realisable function $S_2$ is necessarily admissible.
Besides, extending results from~\cite{Galerne_computation_perimeter_covariogram_2011}, 
we show that a stationary RAMS $X$ has a finite specific perimeter if and only if its specific covariogram $\gamma^s_X$ is Lipschitz, and we obtain an explicit relation between the Lipschitz 
constant of $S_2$ and the specific perimeter, see Proposition~\ref{prop:specific_covariogram_directional_lipschitz_constant}.
Hence the direct implication of Theorem~\ref{thm:intro-result} is somewhat straightforward.
The real difficulty consists in proving the converse implication. 
To do so we adapt the techniques of~\cite{LacMol} to our context which involves several technicalities regarding the approximation of the perimeter by linear combination of local covariogram functional.
We first establish the counterpart of Theorem~\ref{thm:intro-result} for the realisability of local covariogram function $\gamma :\R^d\times \W \rightarrow\R$ (see Theorem~\ref{thm:realisability_over_Rd_perBbeta}) and we then extend this result to the case of specific covariogram of stationary RAMS, see Theorem~\ref{thm:stationary-realisability_over_Rd_perB}.

In addition, we study the links between RAMS and the more usual framework of random closed sets (RACS), which \emph{in fine} enables us to obtain a result analogous to Theorem~\ref{thm:intro-result} for RACS of the real line (see Theorem~\ref{thm:realisability_for_1dRACS}), such a result was out of reach with previously developped methods.

\section{Random measurable sets}
\label{sec:rams}


\subsection{Definition of random measurable sets}
\label{subsec:definition_rams}

Random measurable sets (RAMS) are defined as random variables taking value in the set $\M$ of Lebesgue (classes of) sets of $\R^d$ endowed with the Borel $\sigma$-algebra $\B(\M)$ induced by the natural topology, the so-called local convergence in measure.
We recall that a sequence of measurable sets $(A_n)_{n\in\N}$ locally converges in measure to a measurable set $A$ if for all bounded open sets $U\subset \R^d$, the sequence $\L^d\left( (A_n\Delta A)\cap U\right)$ tends to $0$, where $\Delta$ denotes the symmetric difference.
The local convergence in measure simply corresponds to the convergence of the indicator functions $\Ind_{A_n}$ towards $\Ind_A$ in the space of locally integrable functions $L^1_\loc(\R^d)$, and consequently $\M$ is a complete metrizable space$^1$\footnote{$^1$
This is a consequence of the facts that $L^1_\loc(\R^d)$ is a complete metrizable space and that the set of indicator functions is closed in $L^1_\loc(\R^d)$.}.

\begin{definition}[Random measurable sets]
A \emph{random measurable set} (RAMS) $X$ is a measurable map
$X:\omega \mapsto X(\omega)$ from $(\Omega,\mathcal{A})$ to $\left(\M,\B(\M)\right)$,
where $\B(\M)$ denotes the Borel $\sigma$-algebra induced by the local convergence in measure.
\end{definition}

Note that if $X$ is a RAMS, then $\omega\mapsto \Ind_{X(\omega)}$ is a random locally integrable function.
This concept of random measurable (class of) set(s) is not standard, 
and, to the best of the authors knowledge, it was first introduced in~\cite{Straka_Stepan_random_sets_in_01_1987}
for random subsets of the real interval $[0,1]$, as mentioned in~\cite{Molchanov_theory_random_sets_2005}.

In the remaining part of this section, we will discuss the link between RAMS and other classical random objects, namely random Radon measures, measurable subsets of $\Omega\times \R^d$, and random closed sets.

%

\paragraph{Random Radon measures associated with random measurable sets}
Following the usual construction of random objects,
a random Radon measure is defined as a measurable function from a probability space $(\Omega,\mathcal{A},\P)$
to the space $\mathbf{M}^{+}$ of positive Radon measures on $\R^d$ equipped with the smallest $\sigma$-algebra for which the evaluation maps
$
\mu \mapsto \mu(B),~B\in\B(\R^d)~\text{relatively compact},
$
are measurable, see \textit{e.g}~\cite{Daley_Vere_Jones_introduction_theory_point_process_II_2008, Kallenberg_random_measures_1986, Schneider_Weil_stochastic_and_integral_geometry_2008}.
Any RAMS $X\subset\R^d$ canonically defines a random Radon measure that is the restriction to $X$ of the Lebesgue measure, that is, 
$B \mapsto \leb(X\cap B)$ for Borel set $B\in \B(\R^d)$.
The measurability of this restriction results from the observation that, for all $B\in \B(\R^d)$, the map $f\mapsto \int_B f(x) dx$ is measurable for the $L^1_\loc$-topology.

\paragraph{Existence of a measurable graph representative} 
For a RAMS $X:\Omega\rightarrow \M$,
one can study the measurability properties of the graph $Y=\{(\omega ,x): x\in X(\omega)\}\subset \Omega \times \mathbb{R}^{d}$.

\begin{definition}[Measurable graph representatives]
A subset $Y\subset \Omega\times \R^d$ is a \textit{measurable graph representative} of a RAMS $X$ if
\begin{enumerate}
\item $Y$ is a measurable subset of $\Omega\times \R^d$
(\emph{i.e.} $Y$ belongs to the product $\sigma$-algebra $\mathcal{A}\otimes\mathcal{B}(\R^d)$),
\item For a.a. $\omega\in\Omega$, the $\omega$-section $Y(\omega) = \{x\in\R^d:~(\omega,x)\in Y\}$ is equivalent in measure to $X(\omega)$, \emph{i.e.} $\leb(Y(\omega)\Delta X(\omega))=0$.
\end{enumerate}
\end{definition}

\begin{proposition}
\label{prop_random_measurable_sets_jointly_measurable_sets}
Any measurable set $Y\in\mathcal{A}\otimes\mathcal{B}(\R^d)$ canonically defines a RAMS by considering the Lebesgue class of its $\omega$-sections:
$$
\omega \mapsto Y(\omega) = \{x\in\R^d:~(\omega,x)\in Y\}.
$$
Conversely, any RAMS $X$ admits measurable graph representatives
$Y \in \mathcal{A}\otimes\mathcal{B}(\R^d)$.
\end{proposition}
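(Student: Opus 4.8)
The plan is to treat the two inclusions separately, the key preliminary observation being a convenient description of the Borel $\sigma$-algebra $\B(\M)$. First I would record that, for every bounded Borel set $B\subset\R^d$, the evaluation map $e_B:A\mapsto \leb(A\cap B)$ is continuous for the local convergence in measure (if $\Ind_{A_n}\to\Ind_A$ in $L^1_\loc$, then for any bounded open $U\supset B$ one has $|\leb(A_n\cap B)-\leb(A\cap B)|\le\int_U|\Ind_{A_n}-\Ind_A|\,dx\to 0$), hence $\B(\M)$-measurable; conversely, writing a metrizing distance through symmetric differences with fixed reference sets $C$ and expanding $\leb((A\SymD C)\cap B_n)=\leb(A\cap B_n)+\leb(C\cap B_n)-2\,\leb(A\cap(C\cap B_n))$, each open ball $\{A:\dmet(A,C)<r\}$ is seen to lie in the $\sigma$-algebra generated by the $e_B$. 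Since $\M$ is a separable metric space, $\B(\M)$ is generated by countably many such balls, so $\B(\M)$ coincides with $\sigma(\{e_B:B\text{ bounded Borel}\})$.

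For the direct statement, given $Y\in\mathcal{A}\otimes\B(\R^d)$, each section $Y(\omega)$ is Borel and hence lies in $\M$. To see that $\omega\mapsto Y(\omega)$ is a RAMS it then suffices, by the description of $\B(\M)$ above, to check that $\omega\mapsto e_B(Y(\omega))$ is $\mathcal{A}$-measurable for every bounded Borel $B$. But $e_B(Y(\omega))=\int_B\Ind_Y(\omega,x)\,dx$ with $\Ind_Y$ a nonnegative $\mathcal{A}\otimes\B(\R^d)$-measurable function, so Tonelli's theorem yields measurability in $\omega$, which settles this direction (consistently with the random Radon measure remark made earlier).

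The converse is where the real work lies. The naive route, namely approximating $X$ by countably valued maps and passing to a jointly measurable limit of the associated graphs, breaks down because $L^1_\loc$ convergence only gives pointwise-in-$x$ convergence along a subsequence depending on $\omega$, which obstructs joint measurability of the limit. To circumvent this I would use a dyadic averaging scheme. For $n\in\N$ and $x\in\R^d$ let $Q_n(x)$ be the half-open dyadic cube of side $2^{-n}$ containing $x$, and set
$$
a_n(\omega,x)=\frac{\leb(X(\omega)\cap Q_n(x))}{\leb(Q_n(x))}\in[0,1].
$$
For fixed $n$ this function is constant in $x$ on each dyadic cube $Q$, with value $e_Q(X(\omega))/\leb(Q)$, which is $\mathcal{A}$-measurable in $\omega$ because $X$ is a RAMS and $e_Q$ is $\B(\M)$-measurable; hence each $a_n$ is $\mathcal{A}\otimes\B(\R^d)$-measurable. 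The decisive point is that, for every fixed $\omega$, the dyadic Lebesgue differentiation (martingale) theorem applied to the locally integrable function $\Ind_{X(\omega)}$ gives $a_n(\omega,x)\to\Ind_{X(\omega)}(x)$ for $\leb$-a.e.\ $x$ along the \emph{full} sequence. It then suffices to set $g(\omega,x)=\limsup_n a_n(\omega,x)$, which is jointly measurable, and $Y=\{(\omega,x):g(\omega,x)\ge \tfrac12\}\in\mathcal{A}\otimes\B(\R^d)$; for every $\omega$ one has $Y(\omega)=X(\omega)$ up to a $\leb$-negligible set, giving the desired measurable graph representative. The main obstacle throughout is exactly this measurable-selection-type difficulty in the converse, and the dyadic differentiation theorem is what makes the recovered representative both explicit and jointly measurable.
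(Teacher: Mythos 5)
Your proof is correct, and its core idea -- recover a jointly measurable version of $\Ind_{X(\omega)}$ by differentiating the measure $B\mapsto\leb(X(\omega)\cap B)$ -- is the same as the paper's, but the technical route differs in a way worth noting. The paper treats the direct statement as trivial and, for the converse, invokes a Radon--Nikodym theorem for random measures (proved in its appendix): it forms $f_n(\omega,x)=\mu(\omega,B(x,r_n))/(\kappa_d r_n^d)$ with Euclidean balls, establishes joint measurability by showing each $f_n$ is a Carath\'eodory function (measurable in $\omega$, continuous in $x$, the continuity requiring the observation that spheres are $\mu(\omega,\cdot)$-negligible by absolute continuity), and then applies the Besicovitch derivation theorem to pass to the limit. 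You replace balls by dyadic cubes: your $a_n(\omega,x)$ is piecewise constant in $x$ on the dyadic grid, so joint measurability is immediate from countable additivity with no Carath\'eodory or continuity argument needed, and the a.e.\ convergence along the full sequence comes from the dyadic martingale/Lebesgue differentiation theorem rather than Besicovitch. This makes your converse somewhat more elementary and self-contained, at the price of being tailored to this specific situation, whereas the paper's appendix theorem is stated for general random signed Radon measures absolutely continuous with respect to $\leb$ and is reused elsewhere (in the proof of Proposition~\ref{prop_1d_racs_representative}). Your treatment of the direct statement, via the identification of $\B(\M)$ with the $\sigma$-algebra generated by the evaluation maps $e_B$ and Tonelli, is a correct and fuller version of what the paper dismisses as trivial.
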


\begin{proof}
The first point is trivial. Let us prove the second point.
Consider the random Radon measure $\mu$ associated to $X$, that is
$$
\mu(\omega,B) = \leb(X(\omega)\cap B) = \int_B \Ind_{X(\omega)}(x) dx.
$$
By construction this random Radon measure is absolutely continuous with respect to the Lebesgue measure.
Then, according to the Radon-Nikodym theorem for random measures (see Theorem~\ref{thm_radon_nikodym_random_measure_ac_lebesgue} in Appendix),
there exists a jointly measurable map
$g :(\Omega\times\R^d, \mathcal{A} \otimes \B(\R^d)) \rightarrow \R$
such that for all $\omega\in\Omega$,
$$
\mu(\omega,B) = \int_B g(\omega,x) dx, \quad B\in\B(\R^d).
$$
Hence for all $\omega\in\Omega$, $\Ind_{X(\omega)}(\cdot)$ and $g(\omega,\cdot)$ are both Radon-Nikodym derivatives of $\mu(\omega,\cdot)$ and thus are equal almost everywhere.
In particular, for a.a. $x\in\R^d$, $g(\omega,x)\in\{0,1\}$.
Consequently, the function $(\omega,x) \mapsto \Ind(g(\omega,x)=1)$ is also jointly measurable and is a Radon-Nikodym derivative of $\mu(\omega,\cdot)$ for all $\omega\in\Omega$, and thus
the set
$$
Y = \{ (\omega,x) \in \Omega\times\R^d:~g(\omega,x)=1\}
$$
is a measurable graph representative of $X$.
\end{proof}


\paragraph{Random measurable sets and random closed sets}
Recall that $(\Omega, \mathcal{A},\P)$ denotes our probability space.
Let $\F = \F\left( \R^d \right)$ be the set of all closed subsets of $\R^d$.
Following~\cite[Definition 1.1]{Molchanov_theory_random_sets_2005} a random closed set is defined as follows.

\begin{definition}[Random closed sets]
A map $Z:\Omega \rightarrow \F$ is called a \emph{random closed set} (RACS) if for every compact set $K\subset \R^d$,
$\{ \omega:~Z(\omega)\cap K \neq \emptyset \} \in \mathcal{A}$.
\end{definition}

The framework of random closed sets is standard in stochastic geometry~\cite{Matheron_random_sets_and_integral_geometry_1975, Molchanov_theory_random_sets_2005}.
Let us  reproduce a result of C.J. Himmelberg that allows to link the different notions of random sets, see~\cite[Theorem 2.3]{Molchanov_theory_random_sets_2005} or the original paper~\cite{Himmelberg_measurable_relations_1975} for the complete theorem.
\begin{theorem}[Himmelberg]
\label{thm_himmelberg}
Let $(\Omega, \mathcal{A},\P)$ be a probability space and $Z:\Omega \rightarrow Z(\omega)\in \F$ 
be a map taking values into the set of closed subsets of $\R^d$.
Consider the two following assertions:
\begin{enumerate}
\item[(i)] $\{\omega:~Z\cap F \neq \emptyset\} \in \mathcal{A}$ for every closed set $F\subset\R^d$,
\item[(ii)] The graph of $Z$, \textit{i.e.} the set $\{ (\omega,x)\in\Omega\times \R^d:~x\in Z(\omega)\}$,
belongs to the product $\sigma$-algebra $\mathcal{A}\otimes \mathcal{B}(\R^d)$,
\end{enumerate}
Then the implication $(i)\Rightarrow(ii)$ is always true, and if the probability space $(\Omega, \mathcal{A},\P)$ is complete, one has the equivalence $(i)\Leftrightarrow(ii)$.
\end{theorem}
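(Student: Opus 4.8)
The plan is to route both implications through the distance-to-$Z$ function. I would define $\rho:\Omega\times\R^d\to[0,\infty)$ by $\rho(\omega,x)=\operatorname{dist}(x,Z(\omega))=\inf_{z\in Z(\omega)}\|x-z\|_2$. Since each $Z(\omega)$ is closed, one has $x\in Z(\omega)$ if and only if $\rho(\omega,x)=0$, so that the graph of $Z$ coincides with the zero set $\rho^{-1}(\{0\})$. The whole of the direction (i)$\Rightarrow$(ii) then reduces to showing that $\rho$ is $\mathcal{A}\otimes\B(\R^d)$-measurable, for then its zero set, being the preimage of a Borel set, lies in the product $\sigma$-algebra.

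For the joint measurability of $\rho$ I would argue that it is a Carath\'eodory function, \emph{i.e.} measurable in $\omega$ for each fixed $x$ and continuous in $x$ for each fixed $\omega$. Continuity in $x$ is immediate since $x\mapsto\operatorname{dist}(x,Z(\omega))$ is $1$-Lipschitz. For measurability in $\omega$, fix $x$ and $r>0$ and observe that $\{\omega:\rho(\omega,x)<r\}=\{\omega:Z(\omega)\cap B(x,r)\neq\emptyset\}=\bigcup_{n\geq 1}\{\omega:Z(\omega)\cap \bar{B}(x,r-1/n)\neq\emptyset\}$, a countable union of sets lying in $\mathcal{A}$ by hypothesis (i) applied to the closed balls $\bar{B}(x,r-1/n)$; hence $\omega\mapsto\rho(\omega,x)$ is measurable. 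A Carath\'eodory function is jointly measurable (approximate $x$ by its nearest point in a dense countable grid, so that $\rho$ appears as a pointwise limit of measurable functions by continuity in $x$), which completes this direction with no completeness assumption, as the statement requires.

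The converse (ii)$\Rightarrow$(i) is the deep half, and it is here that completeness enters. For a closed set $F\subset\R^d$ one has $\{\omega:Z(\omega)\cap F\neq\emptyset\}=\pi_\Omega\big(Y\cap(\Omega\times F)\big)$, where $Y$ is the graph of $Z$ and $\pi_\Omega$ denotes the projection onto $\Omega$. Since $F$ is Borel, $Y\cap(\Omega\times F)\in\mathcal{A}\otimes\B(\R^d)$, so the event in question is the projection onto $\Omega$ of a product-measurable subset of $\Omega\times\R^d$ with $\R^d$ Polish. The measurable projection theorem (von Neumann--Aumann, resting on the capacitability of analytic sets) guarantees that such a projection is universally measurable, and in particular belongs to $\mathcal{A}$ as soon as $(\Omega,\mathcal{A},\P)$ is complete. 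I expect this projection step, which draws on the theory of analytic sets, to be the genuine obstacle: the remaining arguments are soft, whereas the passage from a measurable graph back to Effros measurability cannot in general be carried out without both the Polish structure of $\R^d$ and the completeness of the probability space.
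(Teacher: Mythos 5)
The paper does not prove this theorem but quotes it from Himmelberg and Molchanov, and your argument is correct and is essentially the standard proof given in those references: the distance function $\rho(\omega,x)=\operatorname{dist}(x,Z(\omega))$ as a Carath\'eodory function yields (i)$\Rightarrow$(ii) without completeness, and the measurable projection theorem gives the converse under completeness. The only cosmetic point is that $\rho$ takes the value $+\infty$ on $\{\omega:\ Z(\omega)=\emptyset\}$, which changes nothing since that set lies in $\mathcal{A}$ by (i) applied to $F=\R^d$ and the graph is empty there.
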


In view of our definitions for random sets, Himmelberg's theorem can be rephrased in the following terms.

\begin{proposition}[RACS and closed RAMS]
\label{prop_a_closed_random_measurable_set_is_a_racs}
\begin{enumerate}
\item[(i)] Any RACS $Z$ has a measurable graph $Y = \{ (\omega,x)\in\Omega\times \R^d:~x\in Z(\omega)\}$, 
and thus also defines a unique random measurable set.

\item[(ii)]
Suppose that the probability space $(\Omega, \mathcal{A},\P)$ is complete.
Let $Y\in \mathcal{A}\otimes\mathcal{B}(\R^d)$ be a measurable set such that for all $\omega\in\Omega$,
its $\omega$-section
$
Y(\omega) = \{x\in\R^d:~(\omega,x)\in Y\}
$ 
is a closed subset of $\R^d$.
Then, the map $\omega\mapsto Y(\omega)$ defines a random closed set.
\end{enumerate}
\end{proposition}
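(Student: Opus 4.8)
The plan is to derive both assertions directly from Himmelberg's theorem (Theorem~\ref{thm_himmelberg}), the only genuine work being to reconcile the paper's definition of a RACS, phrased in terms of compact hitting sets, with Himmelberg's condition (i), phrased in terms of closed hitting sets.

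For assertion (i), I start from a RACS $Z$, so that $\{\omega:~Z(\omega)\cap K\neq\emptyset\}\in\mathcal{A}$ for every compact $K$. The key observation is that $\R^d$ is $\sigma$-compact, so any closed set $F$ can be exhausted by the compact sets $F_n=F\cap \bar{B}(0,n)$, and $Z(\omega)\cap F\neq\emptyset$ if and only if $Z(\omega)\cap F_n\neq\emptyset$ for some $n$. Hence $\{Z\cap F\neq\emptyset\}=\bigcup_{n}\{Z\cap F_n\neq\emptyset\}$ is a countable union of members of $\mathcal{A}$, which is precisely Himmelberg's condition (i). By the always-valid implication $(i)\Rightarrow(ii)$ of Theorem~\ref{thm_himmelberg}, the graph $Y=\{(\omega,x):~x\in Z(\omega)\}$ belongs to $\mathcal{A}\otimes\mathcal{B}(\R^d)$. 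The first (trivial) part of Proposition~\ref{prop_random_measurable_sets_jointly_measurable_sets} then turns this measurable graph into a RAMS by taking Lebesgue classes of $\omega$-sections, and since the sections coincide with $Z(\omega)$ the associated RAMS is canonically and uniquely determined by $Z$.

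For assertion (ii), I proceed in the opposite direction, now using completeness of $(\Omega,\mathcal{A},\P)$. Given $Y\in\mathcal{A}\otimes\mathcal{B}(\R^d)$ with all $\omega$-sections closed, I set $Z(\omega)=Y(\omega)\in\F$; then the graph of $Z$ is exactly $Y$, so Himmelberg's condition (ii) holds by hypothesis. Completeness lets me invoke the reverse implication $(ii)\Rightarrow(i)$, yielding $\{Z\cap F\neq\emptyset\}\in\mathcal{A}$ for every closed $F$. Since every compact subset of $\R^d$ is in particular closed, this immediately gives the measurability required by the definition of a RACS, so $Z$ is a random closed set.

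The step I expect to matter most is the passage from compact to closed hitting sets in assertion (i): it is elementary but essential, since Himmelberg's theorem is stated with closed test sets whereas the paper's working definition of a RACS uses compact ones. Everything else is a direct bookkeeping application of Theorem~\ref{thm_himmelberg} and Proposition~\ref{prop_random_measurable_sets_jointly_measurable_sets}, with completeness entering only to make the converse direction of Himmelberg available in assertion (ii).
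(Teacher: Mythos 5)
Your proposal is correct and follows the same route as the paper, which presents this proposition precisely as a rephrasing of Himmelberg's theorem (Theorem~\ref{thm_himmelberg}) combined with the first part of Proposition~\ref{prop_random_measurable_sets_jointly_measurable_sets}. The only detail the paper leaves implicit is the passage from compact to closed hitting sets in part (i), which you handle correctly via the exhaustion $F=\bigcup_n \left(F\cap \bar B(0,n)\right)$ and the $\sigma$-compactness of $\R^d$.
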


\subsection{Random measurable sets of finite perimeter}
\label{sec:rams-finite-perimeter}

%

For a closed set $F$, the perimeter is generally defined by the $(d-1)$-dimensional measure of the topological boundary, that is $\mathcal{H}^{d-1}(\partial F)$.
This definition is not relevant for a measurable set $A\subset\R^d$, 
in the sense that the value $\mathcal{H}^{d-1}(\partial A)$ strongly depends on the representative of $A$ within its Lebesgue class.
The proper notion of perimeter for measurable sets is the variational perimeter that defines the perimeter as the variation of the indicator function of the set.
An important feature of the variational perimeter is that it is lower semi-continuous for the convergence in measure, while the functional
$F \mapsto \mathcal{H}^{d-1}(\partial F)$ is not lower semi-continuous on the set of closed sets $\F$ endowed with the hit or miss topology.
This is a key aspect for this paper since it allows to consider the variational perimeter as a regularity modulus for realisability problems in following the framework of~\cite{LacMol}. 

\paragraph{Variational perimeters}
Let $U$ be an open subset of $\R^d$.
Recall that the \textit{(variational) perimeter} $\Per(A;U)$ of a measurable set $A\in\M$ in the open set $U$ is defined by~\eqref{eq:def-per-intro}.
Denote by $S^{d-1}$ the unit sphere of $\R^d$.
Closely related to the perimeter, one also defines the \textit{directional variation} in the direction $u\in S^{d-1}$ of $A$ in $U$ by~\cite[Section 3.11]{Ambrosio_Fusco_Pallara_functions_of_bounded_variation_free_discontinuity_problems_2000}
$$
V_u(A;U) =
\sup\left\{ \int_{U} \Ind_{A} (x) \langle \nabla \varphi (x), u \rangle dx
:~\varphi\in\mathcal{C}^1_c(U,\R),~|\varphi(x)| \leq 1 ~ \text{for all $x$} \right\}.
$$
For technical reasons, we also consider the \textit{anisotropic perimeter}
$$
A \mapsto \Per_{\mathbf{B}}(A;U) = \sum_{j=1}^d V_{e_j}(A;U)
$$
which adds up the directional variations along the $d$ directions of the canonical basis $\mathbf{B} = \{e_1,\dots,e_d\}$.
In geometric measure theory, the functional $A\mapsto \Per_{\mathbf{B}}(A;U)$ is described as the anisotropic perimeter associated with the anisotropy function $x\mapsto \|x\|_\infty$, see e.g.~\cite{Caselles_et_al_convex_calibrable_sets_anisotropic_norms_2008} and the references therein.
Indeed, one easily sees that
$$
\Per_{\mathbf{B}}(A;U) = \sup\left\{ \int_{U} \Ind_{A} (x) \operatorname{div}\varphi(x)dx:~\varphi\in\mathcal{C}^1_c(U,\R ^d),~\|\varphi(x)\|_\infty \leq 1~\text{for all $x$} \right\}.
$$
Hence, the only difference between the variational definition of the isotropic perimeter $\Per(A;U)$ and the one of the anisotropic perimeter $\Per_{\mathbf{B}}(A;U)$ is that the test functions $\varphi$ take values in the $\ell_2$-unit ball $B_d$ for the former whereas they take values in the $\ell_\infty$-unit ball $[-1,1]^d$ for the latter.
The set inclusions $B_d \subset [ -1,1]^d \subset \sqrt{d} B_d$ lead to the tight inequalities
\begin{equation}
\label{eq:relation-Per-PerB}
\Per(A;U) \leq \Per_{\mathbf{B}}(A;U) \leq \sqrt{d} \Per(A;U).
\end{equation}
Consequently a set $A$ has a finite perimeter $\Per(A;U)$ in $U$ if and only if it has a finite anisotropic perimeter
$\Per_{\mathbf{B}}(A;U)$, let us mention that this equivalence is not true when considering only one directional variation $V_u(A;U)$.
One says that a measurable set $A\subset\R^d$ has \textit{locally finite perimeter} if $A$ has a finite perimeter $\Per(A;U)$ in all bounded open sets $U\subset\R^d$.


To finish, let us mention that if $X$ is a RAMS then $\Per(X;U)$, $\Per_{\mathbf{B}}(X;U)$, and $V_u(X;U)$, $u\in S^{d-1}$, are well-defined random variables since the maps $A\mapsto \Per(A;U)$, $A\mapsto \Per_{\mathbf{B}}(A;U)$ and $A\mapsto V_u(A;U)$ are lower semi-continuous for the convergence in measure~\cite{Ambrosio_Fusco_Pallara_functions_of_bounded_variation_free_discontinuity_problems_2000}.
Consequently one says that a RAMS $X$ has a.s. finite (resp. locally finite) perimeter in $U$ if the random variable $\Per(X;U)$ is a.s. finite 
(resp. if, for all bounded open sets $V\subset U$, $\Per(X;V)$ is a.s. finite).

\begin{remark}
Rataj recently proposed a framework for ``random sets of finite perimeter''~\cite{Rataj_random_sets_of_finite_perimeter_2014} that models random sets as random variables in the space of indicator functions of sets of finite perimeter endowed with the Borel $\sigma$-algebra induced by the strict convergence in the space of functions of bounded variation~\cite[Section 3.1]{Ambrosio_Fusco_Pallara_functions_of_bounded_variation_free_discontinuity_problems_2000}.
Since this convergence induced the $L^1$-convergence of indicator functions, any ``random set of finite perimeter'' $X$ uniquely defines a RAMS $X$ having a.s. finite perimeter.
One advantage of the RAMS framework is that it is more general in the sense that it enables to consider random sets that do not have finite perimeter, see e.g. Corollary~\ref{cor:tcd-per}.
\end{remark}

\paragraph{Closed representative of one-dimensional sets of finite perimeter}
Although the general geometric structure of sets of finite perimeter is well-known
(see~\cite[Section 3.5]{Ambrosio_Fusco_Pallara_functions_of_bounded_variation_free_discontinuity_problems_2000}),
it necessitates involved notions from geometric measure theory (rectifiable sets, reduced and essential boundaries, etc.).
However, when restricting to the case of one-dimensional sets of finite perimeter, all the complexity vanishes since subsets of $\R$ having finite perimeter all correspond to finite unions of non empty and disjoint closed intervals.



More precisely, according to Proposition 3.52 of~\cite{Ambrosio_Fusco_Pallara_functions_of_bounded_variation_free_discontinuity_problems_2000},
if a non-negligible measurable set $A\subset \R$ has finite perimeter in an interval $(a,b)\subset\overline{\R}$, there exists an integer $p$ and $p$ pairwise disjoint non empty and closed intervals $J_i = [a_{2i-1}, a_{2i}]\subset \overline{\R}$, with $a_1 < a_2 <\dots < a_{2p}$,
such that
\begin{itemize}
\item $A\cap (a,b)$ is equivalent in measure to the union $\bigcup_i J_i$,
\item the perimeter of $A$ in $(a,b)$ is the number of interval endpoints belonging to $(a,b)$,
$$
\Per(A; (a,b)) = \# \{a_1,a_2,\dots, a_{2p}\}\cap (a,b).
$$
\end{itemize}
Remark that a set of the form $A = \bigcup_i [a_{2i-1}, a_{2i}]$ is closed, and that such a set satisfies the identity
$\Per(A; (a,b)) = \mathcal{H}^0(\partial A\cap (a,b))$,
where $\partial A$ denotes the topological boundary of $A$ and $\mathcal{H}^0$ is the Hausdorff measure of dimension $0$ on $\R$ (\emph{i.e.} the counting measure), while in the general case one only has $\Per(A; (a,b)) \leq \mathcal{H}^0(\partial A\cap (a,b))$ since $A$ may contain isolated points.

In the general case, since  $A\subset\R$ may have locally finite perimeter, then there exists a unique countable or finite family of closed and disjoint intervals $J_i = [a_{2i-1}, a_{2i}]$, $i\in I\subset \Z$, such that $A$ is equivalent in measure to $\bigcup_{i\in I} J_i$, and for all bounded open intervals $(a,b)$, $\Per(A;(a,b))$ is the number of interval endpoints belonging to $(a,b)$.

Using both this observations and Proposition~\ref{prop_a_closed_random_measurable_set_is_a_racs}, one obtains the following proposition.

\begin{proposition}
\label{prop_1d_racs_representative}
Suppose that the probability space $(\Omega, \mathcal{A},\P)$ is complete.
Let $X$ be a RAMS of $\R$ that has a.s. locally finite perimeter.
Then, there exists a RACS $Z\subset\R$ such that for $\P$-almost all $\omega\in\Omega$ and for all $a<b \in\R$,
$$
\mathcal{L}^1(X(\omega)\SymD Z(\omega)) = 0
\quad \text{and} \quad
\Per(X(\omega);(a,b)) = \mathcal{H}^0(\partial Z(\omega)\cap (a,b)).
$$
\end{proposition}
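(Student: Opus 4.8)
The plan is to construct $Z$ as the canonical \emph{closed} representative of $X$, and the key observation is that this representative can be described intrinsically, hence measurably, as the support of the restricted Lebesgue measure. For a measurable set $A\subset\R$ I would set
\[
\mathrm{cl}^*(A)=\bigl\{x\in\R:\ \mathcal{L}^1\bigl(A\cap(x-\eps,x+\eps)\bigr)>0\ \text{for all }\eps>0\bigr\}.
\]
This set depends only on the Lebesgue class of $A$ and is always closed, since its complement is the open set of points admitting a neighbourhood of null measure. When $A$ has locally finite perimeter, the one-dimensional structure theory quoted above writes $A$, up to a negligible set, as a locally finite union $\bigcup_{i\in I}J_i$ of pairwise disjoint non-degenerate closed intervals; one checks directly that $\mathrm{cl}^*(A)=\bigcup_{i\in I}J_i$, that $\mathcal{L}^1(A\,\SymD\,\mathrm{cl}^*(A))=0$, and that $\partial\,\mathrm{cl}^*(A)$ is exactly the set of interval endpoints, so that $\Per(A;(a,b))=\mathcal{H}^0(\partial\,\mathrm{cl}^*(A)\cap(a,b))$ for every $a<b$. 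I would therefore take $Z(\omega)=\mathrm{cl}^*(X(\omega))$; the two asserted identities then hold for every $\omega$ for which $X(\omega)$ has locally finite perimeter, that is, almost surely.

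It remains to check that $\omega\mapsto Z(\omega)$ is a RACS. First I would use Proposition~\ref{prop_random_measurable_sets_jointly_measurable_sets} to fix a measurable graph representative $Y\in\mathcal{A}\otimes\mathcal{B}(\R)$ of $X$, so that $\Ind_Y(\omega,\cdot)=\Ind_{X(\omega)}$ in $L^1_\loc(\R)$ for almost all $\omega$. For $n\geq1$ set $g_n(\omega,x)=\int_{x-1/n}^{x+1/n}\Ind_Y(\omega,t)\,dt$; since $(\omega,x,t)\mapsto \Ind_Y(\omega,t)\Ind_{(x-1/n,x+1/n)}(t)$ is $\mathcal{A}\otimes\mathcal{B}(\R)\otimes\mathcal{B}(\R)$-measurable and non-negative, Tonelli's theorem makes each $g_n$ jointly measurable in $(\omega,x)$. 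Because $\mathrm{cl}^*(A)=\bigcap_{n\geq1}\{x:\ \mathcal{L}^1(A\cap(x-1/n,x+1/n))>0\}$, the set $\widetilde Y=\bigcap_{n\geq1}\{(\omega,x):\ g_n(\omega,x)>0\}$ belongs to $\mathcal{A}\otimes\mathcal{B}(\R)$, and its $\omega$-section coincides with $Z(\omega)$ for almost all $\omega$.

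The final and most delicate point is to turn $\widetilde Y$ into a graph all of whose $\omega$-sections are closed, so as to apply Proposition~\ref{prop_a_closed_random_measurable_set_is_a_racs}(ii). Let $N$ be the union of the event that $X$ does not have locally finite perimeter with the negligible event on which the representative $Y$ fails to be equivalent to $X$. The former is measurable and null, being $\bigcup_k\{\Per(X;(-k,k))=+\infty\}$ with each $\Per(X;(-k,k))$ a random variable; the latter is a subset of a null set and hence measurable by completeness of $(\Omega,\mathcal{A},\P)$. Thus $N\in\mathcal{A}$ with $\P(N)=0$, and replacing $\widetilde Y$ by $\widetilde Y\cap(N^{c}\times\R)$ keeps it measurable while forcing every $\omega$-section to equal $Z(\omega)$ off $N$ and $\emptyset$ on $N$, hence to be closed for all $\omega$; Proposition~\ref{prop_a_closed_random_measurable_set_is_a_racs}(ii) then yields that $\omega\mapsto Z(\omega)$ is a RACS with the two required properties. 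The main obstacle is precisely this interplay between joint measurability and the pointwise-in-$\omega$ closedness demanded by the Himmelberg-type criterion: the $G_\delta$ description of $\mathrm{cl}^*$ only guarantees closed sections off a null set, and it is completeness of the probability space that allows the harmless modification on $N$.
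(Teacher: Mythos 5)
Your proof is correct and follows essentially the same route as the paper's: both arguments construct the canonical closed representative given by the one-dimensional structure theorem, realise it as the positivity set of a jointly measurable local-mass function built from a measurable graph representative of $X$, and then invoke Proposition~\ref{prop_a_closed_random_measurable_set_is_a_racs}(ii) together with completeness of the probability space. The only (cosmetic) difference is that the paper works with the Lebesgue density $\limsup_n \mathcal{L}^1(X(\omega)\cap(x-r_n,x+r_n))/(2r_n)$ where you use the measure-theoretic support $\mathrm{cl}^*$, which has the minor advantage that every $\omega$-section is automatically closed, so your handling of the exceptional null set is if anything more careful than the paper's.
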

 

\begin{proof}
First remark that   a measurable set of finite perimeter 
 $A\subset\R$ equivalent in measure to $\bigcup_{i\in I} [a_{2i-1}, a_{2i}]$ for some finite or countable index set $I\subset \Z$ has the Lebesgue density
\begin{equation*}
\begin{aligned}
D(x,A) & = \lim_{r\rightarrow 0+} \frac{\L^1(A\cap (x-r,x+r))}{2r}
\\
& =
\begin{cases}
1 & \text{if $x$ is in some open interval $(a_{2i-1}, a_{2i})$},
\\
\frac{1}{2} & \text{if $x$ is an interval endpoint $a_{2i-1}$ or $a_{2i}$ for some $i\in I$},
\\
0 & \text{if $\displaystyle x\notin\bigcup_{i\in I} [a_{2i-1}, a_{2i}]$}.
\end{cases}
\end{aligned}
\end{equation*}

Let $X$ be a RAMS of $\R$ that has a.s. locally finite perimeter.
Let $\Omega'\in\mathcal{A}$ be a subset of $\Omega$ of probability one such that for all
$\omega\in\Omega'$, $X$ has locally finite perimeter.
For all $\omega\in\Omega'$, the Lebesgue class $X(\omega)$ admits a representative that is the union of 
an at most countable family of non empty and disjoint closed intervals.
According to the above observation, for a fixed $\omega\in\Omega'$,
the density $D(x,X(\omega))$ exists for all $x\in\R$, and the good representative of $X$ is given by
$\{ x\in \R:~ D(x,X(\omega)) > 0 \}$.
Let $(r_n)_{n\in\N}$ be a positive sequence decreasing to $0$, and  define for all $\omega\in\Omega$,
$$
g(\omega,x) = \limsup_{n\rightarrow+\infty} \frac{\L^1(X(\omega)\cap (x-r_n,x+r_n))}{2r_n}.
$$
According to the proof of Theorem~\ref{thm_radon_nikodym_random_measure_ac_lebesgue} and Proposition~\ref{prop_random_measurable_sets_jointly_measurable_sets},
$$
Y = \{ (\omega,x) \in \Omega\times\R^d:~g(\omega,x)>0\}
$$
is a measurable set that is a measurable graph representative of $X$.
Besides, for a given $\omega\in\Omega'$, since $D(x,X(\omega))$ exists for all $x\in\R$,
$$
D(x,X(\omega)) = g(\omega,x),\quad x\in\R.
$$
Hence, for all $\omega\in\Omega'$, the $\omega$-section $Y(\omega)$ of $Y$ is the union of 
an at most countable and locally finite family of non empty and disjoint closed intervals, and in particular a closed set.
Thus, by Proposition~\ref{prop_a_closed_random_measurable_set_is_a_racs},
the map $\omega\mapsto Y(\omega)$ defines a random closed set.
\end{proof}

\paragraph{Non-closed RAMS in dimension $d>1$}
In contrast to the one-dimensional case,
in dimension $d>1$ there exist measurable sets of finite perimeter that do not have closed representative in their Lebesgue class.
A set $A$ obtained as the union of an infinite family of open balls with small radii and with centers forming a dense subset of $[0,1]^{d}$ is considered in~\cite[Example 3.53]{Ambrosio_Fusco_Pallara_functions_of_bounded_variation_free_discontinuity_problems_2000}. It has finite perimeter, finite measure $\mathcal{L}^{d}(A)<1$, and is such that $\mathcal{L}^{d}(A\cap U)>0$ for any open subset $U$ of $[0,1]^{d}$.

Such a set clearly has no closed representative, because if it had one, say $F$, then $F$ would charge every open subset of $[0,1]^{d}$, and therefore it would be dense in $[0,1]^{d}$.
Since $F$ is closed, one would have $F = [0,1]^{d}$, which contradicts $\mathcal{L}^{d}(F) = \mathcal{L}^{d}(A)<1$.  


\section{Local covariogram and perimeter approximation}
\label{sec:perimeter_approximation}

In this section, we establish general properties of the local covariogram of a measurable set, as well as the mean local covariogram of a RAMS.
A particular emphasis is given on the relation between the local perimeter of a set and the Lipschitz constant of its local covariogram in order to adapt the results of~\cite{Galerne_computation_perimeter_covariogram_2011} to the local covariogram functional.

\subsection{Definition and continuity}

The local covariogram of a measurable set $A\in \M$ is defined in~\eqref{eq:local-cov}.
Remark that for all $y\in\R^d$, and $W\in\W$,
\begin{equation}
\delta_{y;W}(A) = \delta _{y;W}(A\cap(W\cup (-y+W))),\quad A\in\M,
\label{eq:locality_local_covariogram}
\end{equation}
so that only the part of $A$ included in the domain $W\cup (-y+W)$ has an influence on the value of $\delta_{y;W}(A)$, hence local covariograms are indeed local.
Before enunciating specific results of interest for our realisability problem, let us prove that local covariograms are continuous for the local convergence in measure.

\begin{proposition}[Continuity of local covariograms]\quad
\label{prop:continuity_local_covariogram}
\begin{enumerate}
\item[(i)] For all $A\in \M$ and $W\in\W$, the map $y\mapsto \delta_{y;W}(A)$ is uniformly continuous over $\R^d$.
 
\item[(ii)] Let $A\in \M$ and $y\in\R^d$. Then, for all $U, W \in\W$,
$$
|\delta_{y;U}(A) - \delta_{y;W}(A)| \leq \L^d(U\Delta W).
$$
In particular, the map $W\mapsto \delta_{y;W}(A)$ is continuous for the convergence in measure.

\item[(iii)] Let $A$, $B\in \M$ and let $W\in\W$.
Then, for all $y\in\R^d$,
$$
\left| \delta_{y;W}(A) - \delta_{y;W}(B) \right| \leq 2 \L^d((A\Delta B)\cap (W\cup(-y+W))).
$$
In particular, the map $A\mapsto \delta_{y;W}(A)$ is continuous for the local convergence in measure.
\end{enumerate}
\end{proposition}

\begin{proof}
\textsl{(i)}
The convolution interpretation for local covariograms yields
$$
\delta_{y;W}(A) = \int_{\R^d} \Ind_{A\cap W}(x) \Ind_{-A}(y-x) dx = \Ind_{A\cap W}\ast\Ind_{-A}(y).
$$
Since $\Ind_{A\cap W}\in L^1(\R^d)$ and $\Ind_{-A} \in L^\infty(\R^d)$, the uniform continuity is ensured by the $L^p$-$L^{p'}$-convolution theorem, see e.g.~\cite[Proposition 3.2]{Hirsch_Lacombe_elements_functional_analysis_1999}.

\textsl{(ii)}
Using the general inequality $|\L^d(A_1) - \L^d(A_2)| \leq \L^d(A_1\Delta A_2)$, one gets
$$
|\delta_{y;U}(A) - \delta_{y;W}(A)| \leq \L^d( (A\cap (A+y)\cap U) \Delta (A\cap (A+y)\cap W) )
\leq \L^d(U\Delta W).
$$

\textsl{(iii)}
If $A$ and $B$ have finite Lebesgue measure,  
$$
\begin{aligned}
\left| \delta_{y;W}(A) - \delta_{y;W}(B) \right|
& = \left| \Ind_{A\cap W}\ast\Ind_{-A}(y) - \Ind_{B\cap W}\ast\Ind_{-B}(y) \right|
\\
& \leq \left| \Ind_{A\cap W}\ast\Ind_{-A}(y) - \Ind_{A\cap W}\ast\Ind_{-B}(y) + \Ind_{A\cap W}\ast\Ind_{-B}(y) - \Ind_{B\cap W}\ast\Ind_{-B}(y) \right|
\\
& \leq \left| \Ind_{A\cap W}\ast(\Ind_{-A} - \Ind_{-B})(y)\right| + \left|(\Ind_{A\cap W}- \Ind_{B\cap W})\ast\Ind_{-B}(y) \right|
\\
& \leq \|\Ind_{A\cap W}\|_\infty \|\Ind_{-A} - \Ind_{-B}\|_1 + \|\Ind_{A\cap W}- \Ind_{B\cap W}\|_1 \|\Ind_{-B}\|_\infty
\\
& \leq \L^d(A\Delta B) + \L^d((A\cap W)\Delta (B\cap W))
\\
& \leq 2 \L^d(A\Delta B).
\end{aligned}
$$
The announced general inequality is obtained from~\eqref{eq:locality_local_covariogram} which ensures that one can replace $A$ and $B$ by $A\cap (W\cup(-y+W))$ and $B\cap (W\cup(-y+W))$ without changing the values of 
$\delta_{y;W}(A)$ and $\delta_{y;W}(B)$.
\end{proof}

\subsection{Local covariogram and anisotropic perimeter}

As for the case of covariogram~\cite{Galerne_computation_perimeter_covariogram_2011}, difference quotients at zero of local covariograms are related to the directional variations of the set $A$.
This is clarified by the next results, where $V_u(f;U)$ denotes the directional variation of $f\in L^1(U)$ in $U$ in the direction $u\in S^{d-1}$, that is
$$
V_u(f;U) =
\sup\left\{ \int_{U} f (x) \langle \nabla \varphi (x), u \rangle dx
:~\varphi\in\mathcal{C}^1_c(U,\R),~|\varphi(x)| \leq 1 ~ \text{for all $x$} \right\}.
$$
Recall that by definition, for a set $A\in\M$, $V_u(A;U)$ stands for $V_u(\Ind_A;U)$),
and that
$
A\ominus B
$ denotes the Minkowski difference of two measurable sets $A$ and $B$.
The following proposition states a well-known result from the theory of functions of bounded variation that is fully proved in~\cite{Galerne2014_rfbv}.

\begin{proposition}
\label{prop:integral_diff_quotient_and_directional_variation}
Let $U$ be an open subset of $\R^d$ and $u\in S^{d-1}$.
Then, for all functions $f\in L^1(U)$ and $\eps\in\R$,
$$
\int_{U\ominus[0, \eps u]} \frac{\left| f(x+\eps u) - f(x) \right|}{|\eps|} dx \leq V_u(f;U),
$$
where $[0, \eps u]$ denotes the segment $\{t \eps u:~t\in[0,1]\}$,
and
\begin{equation}
\lim_{\eps\rightarrow 0} \int_{U\ominus[0, \eps u]} \frac{\left| f(x+\eps u) - f(x) \right|}{|\eps|} dx = V_u(f;U).
\label{eq_directional_variation_equals_limit_int_abs_diff_quotient}
\end{equation}
\end{proposition}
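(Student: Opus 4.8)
The plan is to establish the pointwise-in-$\eps$ inequality first for smooth functions via the fundamental theorem of calculus, to extend it to all of $L^1(U)$ by mollification, and then to upgrade it into the limiting identity by testing the difference quotients against the $\mathcal{C}^1_c$ functions that appear in the definition of $V_u$.

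For the inequality in the smooth case $f\in\mathcal{C}^1(U)$, I would write, for $\eps>0$ and every $x\in U\ominus[0,\eps u]$, $f(x+\eps u)-f(x)=\int_0^\eps\langle\nabla f(x+su),u\rangle\,ds$ (the whole segment lying in $U$ by definition of the Minkowski difference), take absolute values, integrate in $x$ over $U\ominus[0,\eps u]$, and apply Fubini to interchange the integrations in $x$ and $s$. For each fixed $s\in[0,\eps]$ the change of variables $y=x+su$ maps $U\ominus[0,\eps u]$ into $U$, so the inner integral is at most $\int_U|\langle\nabla f(y),u\rangle|\,dy$; since an integration by parts identifies this quantity with $V_u(f;U)$ for $\mathcal{C}^1$ functions, dividing by $\eps$ gives the claim (the case $\eps<0$ being symmetric).

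To reach a general $f\in L^1(U)$, I would mollify, setting $f_\delta=f\ast\rho_\delta$ on the inner domain $U_\delta=\{x\in U:\operatorname{dist}(x,U^c)>\delta\}$, where the smooth inequality applies. The key input is that convolution does not increase the directional variation, $V_u(f_\delta;U_\delta)\leq V_u(f;U)$, which I would obtain by transferring the mollifier onto the test function in the definition of $V_u$ (if $\varphi\in\mathcal{C}^1_c(U_\delta,\R)$ with $|\varphi|\leq1$, then $\varphi\ast\check\rho_\delta\in\mathcal{C}^1_c(U,\R)$ with sup norm at most $1$). Passing to the limit $\delta\to0$ is the main obstacle: I would use that $f_\delta\to f$ in $L^1_{\loc}$, hence a.e. along a subsequence, that the domains satisfy $U_\delta\ominus[0,\eps u]\uparrow U\ominus[0,\eps u]$, and then invoke Fatou's lemma (applied to the integrand multiplied by the indicator of the exhausting domains) to recover the inequality for $f$.

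Finally, for the identity, the inequality just proved yields $\limsup_{\eps\to0}$ of the difference-quotient integral $\leq V_u(f;U)$, so only the matching lower bound for the $\liminf$ remains. Here I would fix $\varphi\in\mathcal{C}^1_c(U,\R)$ with $|\varphi|\leq1$; for $|\eps|$ small enough that $\supp\varphi+[0,\eps u]\subset U$, the change of variables $y=x+\eps u$ gives $\int\frac{f(x+\eps u)-f(x)}{\eps}\varphi(x)\,dx=\int_U f(y)\frac{\varphi(y-\eps u)-\varphi(y)}{\eps}\,dy$, whose right-hand side tends to $-\int_U f\langle\nabla\varphi,u\rangle\,dy$ by dominated convergence. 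Since the left-hand side is at most $\int_{U\ominus[0,\eps u]}\frac{|f(x+\eps u)-f(x)|}{|\eps|}\,dx$ because $|\varphi|\leq1$ and $\supp\varphi\subset U\ominus[0,\eps u]$, taking $\liminf$ and then the supremum over all such $\varphi$ produces $V_u(f;U)\leq\liminf_{\eps\to0}\int_{U\ominus[0,\eps u]}\frac{|f(x+\eps u)-f(x)|}{|\eps|}\,dx$; combining with the upper bound closes the argument.
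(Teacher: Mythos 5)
Your proof is correct. The paper itself does not prove this proposition --- it is stated as a known fact from the theory of functions of bounded variation and the proof is deferred to the technical report \cite{Galerne2014_rfbv} --- but your argument (fundamental theorem of calculus plus Fubini for smooth functions; mollification with the contraction $V_u(f\ast\rho_\delta;U_\delta)\leq V_u(f;U)$ and Fatou over the exhausting domains $U_\delta\ominus[0,\eps u]\uparrow U\ominus[0,\eps u]$ for general $f\in L^1$; duality against $\mathcal{C}^1_c$ test functions for the lower bound on the $\liminf$) is the standard route and is complete, including in the case $V_u(f;U)=+\infty$. The only point worth making explicit is the harmless sign in the last step: the limit you obtain is $-\int_U f\langle\nabla\varphi,u\rangle\,dy$, and one recovers the supremum defining $V_u(f;U)$ by replacing $\varphi$ with $-\varphi$, which is again an admissible test function.
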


The next two propositions show that when $f$ is the indicator function of a set $A$, the integral
$$
\int_{U\ominus[0, \eps u]} \frac{\left| f(x+\eps u) - f(x) \right|}{|\eps|}dx
$$
can be expressed as a linear combination of local covariograms $\delta_{y;W}(A)$.
Since this linear combination will be central in the next results, we introduce the notation
\begin{equation*}
\sigma _{u;W}(A)=\frac{1}{\|u\|}\left(\delta_{0;W\ominus[-  u,0]}(A) -  \delta_{  u;W\ominus[-  u,0]}(A) + \delta_{0;W\ominus[0,  u]}(A) - \delta_{-  u;W\ominus[0,  u]}(A)\right)
\end{equation*}
for any $A\in\M$, $u\neq 0$, and $W\in \W$.
Remark that for $W\in \W,y\in \mathbb{R}^{d},A\in  \M(\mathbb{R}^{d})$,
\begin{equation}
\label{eq:difference_zero_local_covariogram_is_leb_measure}
\delta_{0;W}(A) - \delta_{y;W}(A)
= \L^d(A\cap W) - \L^d(A\cap(y+A)\cap W) = \L^d((A\setminus(y+A))\cap W).
\end{equation}

\begin{proposition}[Local covariogram and anisotropic perimeter]
\label{prop:per-approx}
For all $A\in\M$, $W\in\W$, $\eps\in\R$, and $u\in S^{d-1}$,
\begin{equation}
0\leq  \sigma_{\eps u;W}(A)\leq V_{u}(A;W)
\quad \quad  
\text{and}
\quad \quad 
\lim_{\eps\rightarrow 0}
\sigma_{\eps u;W}(A)= V_{u}(A;W).
\label{eq:approx-perimeter}
\end{equation}
When summing along the $d$ directions of the canonical basis $\mathbf{B} = \{e_1,e_2,\dots,e_d\}$, one obtains similar results for the anisotropic perimeter, that is, for all $A\in\M$ and $\eps\in\R$,
$$
0\leq \sum_{j=1}^d \sigma _{\eps e_{j};W}(A)\leq \PerB(A;W)
\quad \quad 
\text{and}
\quad \quad 
\lim_{\eps\rightarrow 0}
\sum_{j=1}^d
 \sigma_{\eps e_{j};W}(A)
=  \PerB(A;W).
$$
\end{proposition}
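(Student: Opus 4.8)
The plan is to reduce both assertions to the difference-quotient characterization of the directional variation furnished by Proposition~\ref{prop:integral_diff_quotient_and_directional_variation}, applied to the indicator $f=\Ind_A$. The crux is the purely algebraic identity
$$
\sigma_{\eps u;W}(A) = \int_{W\ominus[0,\eps u]} \frac{\left|\Ind_A(x+\eps u) - \Ind_A(x)\right|}{|\eps|}\,dx,
$$
valid for all $A\in\M$, $W\in\W$, $\eps\neq 0$ and $u\in S^{d-1}$. Once this is established, the non-negativity $\sigma_{\eps u;W}(A)\geq 0$ is immediate since the integrand is non-negative; the upper bound $\sigma_{\eps u;W}(A)\leq V_u(A;W)$ is exactly the first inequality of Proposition~\ref{prop:integral_diff_quotient_and_directional_variation}; and the convergence $\sigma_{\eps u;W}(A)\to V_u(A;W)$ as $\eps\to 0$ is the limit~\eqref{eq_directional_variation_equals_limit_int_abs_diff_quotient}. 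Here one uses that $\Ind_A\in L^1(W)$ because $W$ is bounded, and that $V_u(\Ind_A;W)=V_u(A;W)$ by convention.

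To prove the identity I would first observe that $\sigma$ is \emph{even}, namely $\sigma_{\eps u;W}=\sigma_{-\eps u;W}$, since replacing $\eps u$ by $-\eps u$ leaves the segments $[-\eps u,0]$ and $[0,\eps u]$ unchanged and merely swaps the two pairs of covariogram terms; hence I may assume $\eps>0$. Expanding $\left|\Ind_A(x+\eps u)-\Ind_A(x)\right| = \Ind_A(x)\bigl(1-\Ind_A(x+\eps u)\bigr) + \Ind_A(x+\eps u)\bigl(1-\Ind_A(x)\bigr)$ splits the integral into $I_1+I_2$. For $I_1$, the conditions $x\in A$ and $x+\eps u\notin A$ read $x\in A\setminus(-\eps u+A)$, so restricting to the window $W\ominus[0,\eps u]$ and applying~\eqref{eq:difference_zero_local_covariogram_is_leb_measure} with $y=-\eps u$ gives $I_1 = \delta_{0;W\ominus[0,\eps u]}(A) - \delta_{-\eps u;W\ominus[0,\eps u]}(A)$. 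For $I_2$, the change of variables $x'=x+\eps u$ turns the domain $W\ominus[0,\eps u]$ into $W\ominus[-\eps u,0]$ (using $(W\ominus[0,\eps u])+\eps u = W\ominus[-\eps u,0]$) and the condition into $x'\in A\setminus(\eps u+A)$, so~\eqref{eq:difference_zero_local_covariogram_is_leb_measure} with $y=\eps u$ yields $I_2 = \delta_{0;W\ominus[-\eps u,0]}(A) - \delta_{\eps u;W\ominus[-\eps u,0]}(A)$. Adding $I_1+I_2$ and dividing by $\|\eps u\|=|\eps|$ reproduces precisely the four terms defining $\sigma_{\eps u;W}(A)$.

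The anisotropic statement then follows by summation: by definition $\PerB(A;W)=\sum_{j=1}^d V_{e_j}(A;W)$, so adding the $d$ inequalities $0\leq \sigma_{\eps e_j;W}(A)\leq V_{e_j}(A;W)$ and the $d$ limits gives both $0\leq \sum_{j=1}^d\sigma_{\eps e_j;W}(A)\leq \PerB(A;W)$ and $\sum_{j=1}^d\sigma_{\eps e_j;W}(A)\to \PerB(A;W)$.

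The main obstacle I anticipate is purely bookkeeping in the second term: getting the Minkowski-difference translation right, that is the identity $(W\ominus[0,\eps u])+\eps u = W\ominus[-\eps u,0]$, and pairing each shift vector $y=\pm\eps u$ with the correct eroded window, so that the four covariogram terms land exactly on the definition of $\sigma$. One should also check that $W\ominus[0,\eps u]$ and $W\ominus[-\eps u,0]$ are bounded open sets so the covariogram evaluations are well defined: boundedness is clear since they are contained in $W$, and openness follows from the compactness of the segment inside the open set $W$.
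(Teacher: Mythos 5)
Your proof is correct and follows essentially the same route as the paper: both establish the identity $|\eps|\,\sigma_{\eps u;W}(A)=\int_{W\ominus[0,\eps u]}|\Ind_A(x+\eps u)-\Ind_A(x)|\,dx$ by splitting the integrand into the two set differences of $A\Delta(-\eps u+A)$, translating one piece by $\eps u$ via $(W\ominus[0,\eps u])+\eps u=W\ominus[-\eps u,0]$, and invoking~\eqref{eq:difference_zero_local_covariogram_is_leb_measure}, then conclude from Proposition~\ref{prop:integral_diff_quotient_and_directional_variation}. The only cosmetic difference is your explicit evenness remark reducing to $\eps>0$, which the paper does not need since the computation is sign-agnostic.
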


\begin{proof}
The announced inequalities are immediate from Proposition~\ref{prop:integral_diff_quotient_and_directional_variation} and the equality
$$
\begin{aligned}
& \int_{W\ominus[0,\eps u]} |\Ind_A(x+\eps u) - \Ind_A(x)| dx
= |\eps| \sigma _{\eps u;W}(A)
\end{aligned}
$$ holds for all $A\in\M, W\in\W, u\in S^{d-1}, \eps\in\R$. Indeed,
$$
\begin{aligned}
& \int_{W\ominus[0,\eps u]} |\Ind_A(x+\eps u) - \Ind_A(x)| dx\\
& = \int_{W\ominus[0,\eps u]} |\Ind_{-\eps u + A}(x) - \Ind_A(x)| dx \\
& = \L^d\left( \left( (-\eps u + A)\Delta A \right)\cap (W\ominus[0,\eps u])\right)\\
& = \L^d\left( \left( (-\eps u + A)\setminus A \right)\cap (W\ominus[0,\eps u])\right) + \L^d\left( \left( A\setminus (-\eps u + A) \right)\cap (W\ominus[0,\eps u])\right).\\
\end{aligned}
$$
Applying the translation by vector $\eps u$ yields
$$
\L^d\left( \left( (-\eps u + A)\setminus A \right)\cap (W\ominus[0,\eps u])\right)
= \L^d\left( \left( A \setminus (\eps u + A) \right)\cap \left(\eps u +(W\ominus[0,\eps u])\right)\right).
$$
Remark that $\eps u + (W\ominus[0,\eps u]) = W\ominus[-\eps u,0]$ and thus, using (\ref{eq:difference_zero_local_covariogram_is_leb_measure}),
$$
\begin{aligned}
& \int_{W\ominus[0,\eps u]} |\Ind_A(x+\eps u) - \Ind_A(x)| dx\\
& = 
\L^d\left( \left( A \setminus (\eps u + A) \right)\cap (W\ominus[-\eps u,0])\right)
+ \L^d\left( \left( A\setminus (-\eps u + A) \right)\cap (W\ominus[0,\eps u])\right)
\\
& = \delta_{0;W\ominus[-\eps u,0]}(A) -  \delta_{\eps u;W\ominus[-\eps u,0]}(A) + \delta_{0;W\ominus[0,\eps u]}(A) - \delta_{-\eps u;W\ominus[0,\eps u]}(A).
\end{aligned}
$$
\end{proof}

We  turn to the counterpart of Proposition~\ref{prop:per-approx} for mean local covariograms of RAMS.
For a RAMS $X$, $\gamma_X$ denotes the \emph{(mean) local covariogram} of the RAMS $X$ defined by 
$\gamma_X (y;W)=\E \delta_{y;W}(X)$, $y\in \mathbb{R}^{d},$ $W\in \W$,
and define similarly ${\sigma_{X}(u;W)=\E \sigma_{u;W}(X)}$.

\begin{corollary}
\label{cor:tcd-per}
Let $X$ be a RAMS.
Then, for all $W\in\W$ and $u\in S^{d-1}$,
$$
\sigma _{X}(\eps u;W) \leq \E V_{u}(X;W)
\quad \quad  
\text{and}
\quad \quad 
\E V_{u}(X;W) =\lim_{\eps \to 0} \sigma _{X}(\eps u;W).
$$
\end{corollary}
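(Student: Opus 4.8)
The plan is to deduce the corollary from its deterministic counterpart, Proposition~\ref{prop:per-approx}, by integrating over $\omega$ and then controlling the interchange of limit and expectation. First I would fix $W\in\W$ and $u\in S^{d-1}$ and apply Proposition~\ref{prop:per-approx} pathwise, i.e. to the measurable set $A=X(\omega)$ for each $\omega$. This yields two pieces of pointwise information: the sandwich $0\leq \sigma_{\eps u;W}(X(\omega))\leq V_u(X(\omega);W)$, valid for every $\eps\in\R$, and the pointwise convergence $\sigma_{\eps u;W}(X(\omega))\to V_u(X(\omega);W)$ as $\eps\to 0$. Before integrating, I would record that these quantities are genuine random variables: $\sigma_{\eps u;W}(X)$ is a finite linear combination of local covariograms $\delta_{y;W'}(X)$, each of which is measurable as a continuous functional of the RAMS by Proposition~\ref{prop:continuity_local_covariogram}(iii), and $V_u(X;W)$ is measurable by lower semicontinuity; both are nonnegative, so their expectations are well defined in $[0,\infty]$.

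The inequality $\sigma_X(\eps u;W)\leq \E V_u(X;W)$ is then immediate: taking expectations in the pathwise bound and using the definition $\sigma_X(\eps u;W)=\E\sigma_{\eps u;W}(X)$ gives it at once by monotonicity of the expectation.

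For the limit, the natural reflex is dominated convergence with dominating function $V_u(X;W)$, but this is only licit when $\E V_u(X;W)<\infty$, whereas the statement must also cover the case $\E V_u(X;W)=+\infty$. This is the one point requiring care, and I would circumvent it with a Fatou-type sandwich rather than domination. Concretely, for any sequence $\eps_n\to 0$ set $f_n=\sigma_{\eps_n u;W}(X)\geq 0$; since $f_n\to V_u(X;W)$ almost surely, Fatou's lemma gives $\liminf_n \E f_n\geq \E V_u(X;W)$, while the upper bound established above gives $\E f_n\leq \E V_u(X;W)$ for every $n$, whence $\limsup_n \E f_n\leq \E V_u(X;W)$. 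The two inequalities force $\lim_n \E f_n=\E V_u(X;W)$, and this holds for every sequence $\eps_n\to 0$, which is exactly $\lim_{\eps\to 0}\sigma_X(\eps u;W)=\E V_u(X;W)$.

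The main obstacle is therefore not analytic depth but the bookkeeping around a possibly infinite mean directional variation: the argument must remain valid, with value $+\infty$, in the infinite-mean case, so dominated convergence cannot be invoked. Fatou's lemma combined with the uniform upper bound $\sigma_{\eps u;W}(X)\leq V_u(X;W)$ is the correct, and essentially the only, tool, and it handles both cases simultaneously. The rest of the work reduces to verifying measurability of the integrands, which follows directly from the continuity and lower-semicontinuity results recorded earlier in the section.
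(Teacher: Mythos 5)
Your proposal is correct and follows essentially the same route as the paper: apply Proposition~\ref{prop:per-approx} pathwise, take expectations for the inequality, and pass to the limit via a standard convergence theorem. The only (cosmetic) difference is that the paper splits into two cases (dominated convergence when $\E V_u(X;W)<\infty$, Fatou when it is infinite), whereas your Fatou-plus-uniform-upper-bound sandwich handles both cases in one stroke.
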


\begin{proof}
This is straightforward from~\eqref{eq:approx-perimeter}.
If $\E V_{u}(X;W)<\infty$, apply the Lebesgue theorem with the almost sure convergence and domination given by~\eqref{eq:approx-perimeter},
while if $\E V_{u}(X;W)=\infty$, apply Fatou's lemma.
\end{proof}

We  turn to similar results for stationary RAMS.
First recall that if $X$ is a stationary RAMS,
the specific covariogram of $X$ is defined by $\gamma_{X}^{s}(y) = \gamma_{X}(y;(0,1)^d)\in[0,1]$.
By analogy with the specific perimeter $\Per^{s}(X) = \E \Per(X;(0,1)^{d})$,
the \textit{specific anisotropic perimeter} of $X$ is defined by $\PerB^{s}(X) = \E \PerB(X;(0,1)^{d})\in[0,+\infty]$ and 
for all $u\in S^{d-1}$, the \textit{specific variation} of $X$ in direction $u$ is given by $ V^s_{u}(X) = \E V_{u}(X;(0,1)^d)$.

%

For a function $F:\R^d\rightarrow \R$, define the Lipschitz constant in the $j$-th direction at $y\in \mathbb{R}^{d}$ by
$$
\Lip_{j}( F,y) = \sup_{ t\in\R} \frac{|F(y+te_j)-F(y)|}{|t|},
$$
and denote $\Lip_{j}(F)=\sup_{y\in \mathbb{R}^{d}}\Lip_{j}(F,y)$.
Note that a function $F$ is Lipschitz in the usual sense if and only each constant $\Lip_{j}(F)$ is finite for $j= 1,\dots,d$.

\begin{proposition}
\label{prop:specific_covariogram_directional_lipschitz_constant}
Let $X$ be a stationary RAMS and let $\gamma_{X}^{s}$ be its specific covariogram.
Then $\gamma_{X}^{s}$ is even and, for all $y,z\in\R^d$,
$$
|\gamma_{X}^{s}(y)-\gamma_{X}^{s}(z)| \leq \gamma_{X}^{s}(0) - \gamma_{X}^{s}(y-z).
$$
In particular, $\gamma_{X}^{s}$ is Lipschitz over $\R^d$ if and only if $\gamma_{X}^{s}$ is Lipschitz at $0$.
Besides,  for all $j\in\{1,\dots, d\}$,
$$
\frac{\gamma_{X}^{s}(0)-\gamma_{X}^{s}(\eps  e_{j})  }{ |\eps| } \leq \frac{1}{2} V^s_{e_j}(X),\quad \eps\neq 0,
$$
and
$$
\Lip_{j}(\gamma_{X}^{s})=\Lip_{j}( \gamma_{X}^{s},0)
= \lim_{\eps \rightarrow 0} \frac{\gamma_{X}^{s}(0) - \gamma_{X}^{s}(\eps e_j)}{|\eps|}
= \frac{1}{2} V^s_{e_j}(X).
$$
\end{proposition}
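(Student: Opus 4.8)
The plan is to derive everything from the fundamental relation established in Proposition~\ref{prop:per-approx} and its stationary counterpart, exploiting the stationarity to reduce all local covariograms to specific covariograms. The key computational engine is the identity from equation~\eqref{eq:difference_zero_local_covariogram_is_leb_measure}, namely $\delta_{0;W}(A) - \delta_{y;W}(A) = \L^d((A\setminus(y+A))\cap W)$, which I expect to survive the passage to stationary expectations in a clean form.

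**First I would** establish the two elementary properties. Evenness of $\gamma_X^s$ follows from stationarity: since $\L^d(X\cap(y+X)\cap W)$ and $\L^d(X\cap(-y+X)\cap W)$ have the same expectation under a translation-invariant law (translate by $y$), we get $\gamma_X^s(y) = \gamma_X^s(-y)$. For the contraction-type inequality $|\gamma_X^s(y) - \gamma_X^s(z)| \leq \gamma_X^s(0) - \gamma_X^s(y-z)$, I would work at the level of specific quantities. The quantity $\gamma_X^s(0) - \gamma_X^s(w)$ is the specific measure of $X\setminus(w+X)$, by the stationary version of~\eqref{eq:difference_zero_local_covariogram_is_leb_measure}. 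The inequality should follow from a set-theoretic estimate: writing $w = y - z$ and using that $X\cap(y+X)$ and $X\cap(z+X)$ differ by a set controlled by $X\setminus(w+X)$ (after an appropriate translation), combined with the triangle-type inequality $|\L^d(A_1)-\L^d(A_2)|\leq \L^d(A_1\Delta A_2)$ already used in the proof of Proposition~\ref{prop:continuity_local_covariogram}(ii). The immediate consequence is that Lipschitz-at-$0$ implies global Lipschitzness, since bounding $\gamma_X^s(0)-\gamma_X^s(y-z)$ by $C|y-z|$ bounds the increment $|\gamma_X^s(y)-\gamma_X^s(z)|$.

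**Next I would** prove the quantitative directional estimates. For the inequality $\frac{\gamma_X^s(0)-\gamma_X^s(\eps e_j)}{|\eps|} \leq \frac{1}{2}V^s_{e_j}(X)$, the point is that $\sigma_{\eps e_j; W}(A)$, as defined just before Proposition~\ref{prop:per-approx}, is (up to the factor $|\eps|$) the symmetric-difference integral, which splits into two halves by~\eqref{eq:difference_zero_local_covariogram_is_leb_measure}. Under stationarity with $W = (0,1)^d$, each of the four local-covariogram terms becomes a specific covariogram, and the window shifts $W\ominus[-\eps u,0]$ and $W\ominus[0,\eps u]$ contribute measure $\L^d(W)-|\eps| = 1-|\eps|$ rather than $1$; the factor $\frac{1}{2}$ arises because $\sigma_{\eps e_j;(0,1)^d}$ collapses, after taking expectations and using evenness, to $\frac{2}{|\eps|}(\gamma_X^s(0)-\gamma_X^s(\eps e_j))$ on a window of measure $1-|\eps|$. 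Applying Corollary~\ref{cor:tcd-per} with $u = e_j$, $W=(0,1)^d$ then gives $\sigma_X(\eps e_j;(0,1)^d)\leq \E V_{e_j}(X;(0,1)^d) = V^s_{e_j}(X)$, and the limit relation in Corollary~\ref{cor:tcd-per} upgrades this to the stated equality $\lim_{\eps\to 0}\frac{\gamma_X^s(0)-\gamma_X^s(\eps e_j)}{|\eps|} = \frac{1}{2}V^s_{e_j}(X)$.

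**The main obstacle** I expect is the careful bookkeeping in relating the four-term expression $\sigma_{\eps e_j;(0,1)^d}$ to the single difference $\gamma_X^s(0)-\gamma_X^s(\eps e_j)$ while correctly tracking the shifted-window measures $\L^d(W\ominus[0,\eps u]) = 1-|\eps|$, since the stationary covariogram is defined only on the unit-cube window $(0,1)^d$ and the shifted windows are not translates of it. I would handle this by invoking the extension of $W\mapsto\gamma_X(y;W)$ into a Lebesgue-proportional measure (stated in the covariogram subsection): for any $W\in\W$ one has $\gamma_X(y;W) = \gamma_X^s(y)\L^d(W)$, so each term $\E\delta_{\eps u; W\ominus[0,\eps u]}(X) = \gamma_X^s(\eps u)(1-|\eps|)$, and the factor $1-|\eps|$ is common to all four terms and cancels in the $\eps\to 0$ limit, while evenness $\gamma_X^s(\eps u)=\gamma_X^s(-\eps u)$ merges the pairs. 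Finally, the chain $\Lip_j(\gamma_X^s) = \Lip_j(\gamma_X^s,0)$ follows from the contraction inequality of the first step (which forces the directional sup over $y$ to be attained in the limit at $0$), and equals the one-sided limit because $\gamma_X^s$ is even so the difference quotient $\frac{\gamma_X^s(0)-\gamma_X^s(\eps e_j)}{|\eps|}$ is monotone in $|\eps|$ by the contraction inequality, making the supremum coincide with the $\eps\to 0$ limit.
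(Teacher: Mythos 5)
Your overall architecture matches the paper's: evenness from stationarity, the contraction inequality $|\gamma_X^s(y)-\gamma_X^s(z)|\leq\gamma_X^s(0)-\gamma_X^s(y-z)$ from a pointwise covariogram estimate (this is exactly the paper's auxiliary lemma, inequality \eqref{eq:difference_local_covariogram_less_than_in_zero_translated}), the identification $\Lip_j(\gamma_X^s)=\Lip_j(\gamma_X^s,0)$ as an immediate consequence, and Corollary~\ref{cor:tcd-per} to identify the limit of the difference quotient. The one place where you diverge is also the one place where your argument has a genuine gap: the non-asymptotic inequality $\frac{\gamma_X^s(0)-\gamma_X^s(\eps e_j)}{|\eps|}\leq\frac12 V^s_{e_j}(X)$ for \emph{every} $\eps\neq 0$. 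Working only with the unit window, your own bookkeeping gives $\sigma_X(\eps e_j;(0,1)^d)=\frac{2(1-|\eps|)}{|\eps|}\bigl(\gamma_X^s(0)-\gamma_X^s(\eps e_j)\bigr)$ for $|\eps|<1$, and Corollary~\ref{cor:tcd-per} then yields only $\frac{\gamma_X^s(0)-\gamma_X^s(\eps e_j)}{|\eps|}\leq\frac{1}{2(1-|\eps|)}V^s_{e_j}(X)$ — the factor $1-|\eps|$ does \emph{not} cancel except in the limit, and for $|\eps|\geq 1$ the shifted windows are empty and you get nothing. The paper removes this loss by using the dilated windows $(0,c)^d$ and taking a supremum over $c>0$: the boundary factor becomes $1-|\eps|/c$, whose supremum over $c$ is $1$, which yields the sharp constant $\tfrac12$ for every $\eps$ at once.

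Your proposed repair — that the supremum over $\eps$ of the difference quotient coincides with its limit at $0$ because the quotient is ``monotone in $|\eps|$ by the contraction inequality'' — is not correct as stated: the contraction inequality together with evenness gives only that $f(\eps)=\gamma_X^s(0)-\gamma_X^s(\eps e_j)$ is subadditive in $\eps$, and subadditivity does not make $f(\eps)/|\eps|$ monotone. What subadditivity \emph{does} give, via the Fekete-type estimate $f(s)/s\leq f(s/n)/(s/n)$, is precisely $\sup_{\eps}f(\eps)/|\eps|=\lim_{\eps\to 0}f(\eps)/|\eps|$, which combined with your correct computation of the limit would rescue the uniform inequality. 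So your route can be completed, but it requires replacing the monotonicity claim by an explicit subadditivity argument; alternatively, adopt the paper's supremum over window sizes, which settles the inequality for all $\eps$ directly and then sandwiches the limit.
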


The proof of the proposition is an adaptation of similar results for covariogram functions~\cite{Galerne_computation_perimeter_covariogram_2011}.
We first state a lemma regarding local covariogram of deterministic sets.

\begin{lemma}
For all $y, z\in\R^d$, $W\in\W$, and $A\in\M$,
\begin{equation}
\label{eq:difference_local_covariogram_less_than_in_zero_translated}
\delta_{y;W}(A) - \delta_{z;W}(A)
\leq \delta_{0;-y+W}(A) - \delta_{z-y;-y+W}(A).
\end{equation}
\end{lemma}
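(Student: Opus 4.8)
The plan is to pass to the integral representation of the local covariogram, where the claimed inequality reduces to an elementary pointwise estimate on indicator functions, and then to translate back to covariogram notation using the identity~\eqref{eq:difference_zero_local_covariogram_is_leb_measure}.

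First I would write, exactly as in the convolution computation of Proposition~\ref{prop:continuity_local_covariogram}, that $x\in y+A$ is equivalent to $x-y\in A$, so that
$$\delta_{y;W}(A) = \int_W \Ind_A(x)\,\Ind_A(x-y)\,dx.$$
Subtracting the analogous identity for $z$ gives the single integral
$$\delta_{y;W}(A) - \delta_{z;W}(A) = \int_W \Ind_A(x)\bigl(\Ind_A(x-y) - \Ind_A(x-z)\bigr)\,dx.$$

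The key step is a direction-sensitive pointwise bound: since $\Ind_A(x)\in\{0,1\}$, for any real $g$ one has $\Ind_A(x)\,g \leq g^+ = \max(g,0)$. Applying this with $g = \Ind_A(x-y) - \Ind_A(x-z)$, whose positive part is $\Ind_A(x-y)\bigl(1-\Ind_A(x-z)\bigr) = \Ind_{(y+A)\setminus(z+A)}(x)$, yields
$$\delta_{y;W}(A) - \delta_{z;W}(A) \leq \int_W \Ind_{(y+A)\setminus(z+A)}(x)\,dx = \leb\bigl(((y+A)\setminus(z+A))\cap W\bigr).$$
Finally I would perform the change of variable $x = t+y$; by translation invariance of $\leb$ the right-hand side equals $\leb\bigl((A\setminus((z-y)+A))\cap(-y+W)\bigr)$, which by~\eqref{eq:difference_zero_local_covariogram_is_leb_measure}, applied with $z-y$ in place of $y$ and $-y+W$ in place of $W$, is exactly $\delta_{0;-y+W}(A) - \delta_{z-y;-y+W}(A)$. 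Chaining these (in)equalities gives the claim.

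The only genuinely non-routine point is the estimate $\Ind_A\cdot g \leq g^+$: one may discard the factor $\Ind_A(x)$ only on the region where the bracket is nonnegative, which is precisely what forces the asymmetric right-hand side built from the set difference $(y+A)\setminus(z+A)$ rather than a symmetric difference. Everything else—the rewriting as a convolution-type integral and the translation by $-y$—is bookkeeping that parallels computations already carried out in the excerpt.
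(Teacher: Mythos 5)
Your proof is correct and follows essentially the same route as the paper's: both arguments reduce the difference $\delta_{y;W}(A)-\delta_{z;W}(A)$ to the quantity $\leb\bigl(((y+A)\setminus(z+A))\cap W\bigr)$ — you via the pointwise bound $\Ind_A\cdot g\le g^+$ on indicator functions, the paper via the set-theoretic inequality $\leb(S)-\leb(T)\le\leb(S\setminus T)$ followed by dropping the intersection with $A$ — and then conclude by translating by $-y$ and invoking~\eqref{eq:difference_zero_local_covariogram_is_leb_measure}. The two presentations are interchangeable, and every step you give checks out.
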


\begin{proof}
We have 
$$
\begin{aligned}
\delta_{y;W}(A) - \delta_{z;W}(A)
& = \L^d(A\cap (y+A)\cap W) - \L^d(A\cap (z+A)\cap W)
\\
& \leq \L^d((A\cap (y+A)\cap W) \setminus (A\cap (z+A)\cap W))
\\
& \leq \L^d(((y+A)\cap W) \setminus ((z+A)\cap W))
\\
& \leq \L^d((y+A)\cap W) - \L^d((y+A)\cap (z+A)\cap W)
\\
& \leq \L^d(A\cap (-y+W)) - \L^d(A\cap (z-y+A)\cap (-y+W))
\\
& \leq \delta_{0;-y+W}(A) - \delta_{z-y;-y+W}(A).
\end{aligned}
$$
\end{proof}

\begin{proof}[Proof of Proposition~\ref{prop:specific_covariogram_directional_lipschitz_constant}]

Let us first check that $\gamma_{X}^{s}$ is even.
For all $y\in\R^d$, 
$$
\gamma_{X}^{s}(-y)
= \E\left( \L^d(X\cap (-y+X) \cap (0,1)^d \right)
= \E\left( \L^d( (y+X)\cap X \cap (y+(0,1)^d) \right)
= \gamma_{X}^{s}(y).
$$
Let us turn to the inequality.
As a direct consequence of~\eqref{eq:difference_local_covariogram_less_than_in_zero_translated},
$$
\gamma_X(y;W) - \gamma_X(z;W) \leq \gamma_X(0;-y+W) - \gamma_X(z-y;-y+W).
$$
But, since $\gamma_X(y;W) = \gamma^s_X(y) \L^d(W)$, 
$
\gamma_{X}^{s}(y)-\gamma_{X}^{s}(z) \leq \gamma_{X}^{s}(0) - \gamma_{X}^{s}(z-y),
$
and interchanging $y$ and $z$ yields  $|\gamma_{X}^{s}(y)-\gamma_{X}^{s}(z)| \leq \gamma_{X}^{s}(0) - \gamma_{X}^{s}(y-z)$.
This inequality yields
$$
\Lip_{j}(\gamma_{X}^{s})
= \sup_{y\in\R^d,~\eps \in\R} \frac{|\gamma_{X}^{s}(y+\eps e_j)-\gamma_{X}^{s}(y)|}{|\eps |}
= \sup_{\eps\in\R} \frac{\gamma_{X}^{s}(0)-\gamma_{X}^{s}(\eps e_j)}{|\eps|}=\Lip_{j}( \gamma _{X}^{s},0).
$$
Since $\gamma_{X}^{s}$ is even, for all $\eps\neq 0$,
$$
\begin{aligned}
& \frac{\gamma_{X}^{s}(0)-\gamma_{X}^{s}(\eps   e_{j})  }{ |\eps|}
\\
& = \frac{1}{2} \frac{\gamma_{X}^{s}(0)-\gamma_{X}^{s}(\eps   e_{j})+\gamma_{X}^{s}(0)-\gamma_{X}^{s}(-\eps   e_{j})}{ |\eps|}
\\
& = 
\frac{1}{2} \sup_{c>0}\frac{\gamma_{X}^{s}(0)-\gamma_{X}^{s}(\eps  e_{j})+\gamma_{X}^{s}(0)-\gamma_{X}^{s}(-\eps  e_{j})}{ |\eps|}\frac{c^{d}-|\eps|   c}{c^{d}}
\\
& = \frac{1}{2}\sup_{c>0}\frac{ (\gamma_{X}^{s}(0)-\gamma_{X}^{s}(\eps  e_{j}))\L^d((0,c)^d\ominus[-\eps e_j,0]) + (\gamma_{X}^{s}(0)-\gamma_{X}^{s}(-\eps  e_{j})) \L^d((0,c)^d\ominus[0,\eps  e_j])}{|\eps| \leb((0,c)^{d})}
\\
& = \frac{1}{2} \sup_{c>0} \sigma_X(\eps e_{j} ; (0,c)^{d}) \frac{1}{ \leb((0,c)^{d})}
\\
& \leq \frac{1}{2} \sup_{c>0} \E V_{e_j}(X;(0,c)^{d}) \frac{1}{ \leb((0,c)^{d})} = \frac{1}{2} V_{e_{j}}^{s}(X),
\end{aligned}
$$
where   the inequality follows from~\eqref{eq:approx-perimeter}.
This shows that
$$
\Lip_j(\gamma_{X}^{s},0) 
= \sup_{\eps \in \mathbb{R}}\frac{\gamma_{X}^{s}(0)-\gamma_{X}^{s}(\eps   e_{j})  }{ | \eps | } 
\leq \frac{1}{2} V_{e_{j}}^{s}(X).
$$
Besides, for all $\eps \neq 0$ and $c>0$,
$$
\frac{1}{2} \sigma_X(\eps e_{j} ; (0,c)^{d}) \frac{1}{ \leb((0,c)^{d})}
\leq \frac{\gamma_{X}^{s}(0)-\gamma_{X}^{s}(\eps   e_{j})  }{ |\eps|}
\leq \frac{1}{2} V_{e_{j}}^{s}(X),
$$
and according to Corollary~\ref{cor:tcd-per}, the left-hand term tends to $\frac{1}{2} V_{e_{j}}^{s}(X)$ when $\eps\to 0$.
Hence,
$$
\lim_{\eps\rightarrow 0} \frac{\gamma_{X}^{s}(0)-\gamma_{X}^{s}(\eps   e_{j})  }{ |\eps|}
= \frac{1}{2} V_{e_{j}}^{s}(X)
= \sup_{\eps \in \mathbb{R}}\frac{\gamma_{X}^{s}(0)-\gamma_{X}^{s}(\eps   e_{j})  }{ | \eps | }.
$$

\end{proof}

\subsection{Anisotropic perimeter approximation for pixelized sets}

The proofs of our main results rely on several approximations involving pixelized sets and discretized covariograms.
We proved in the previous section that the directional variations as well as the anisotropic perimeter can be computed from limits of difference quotients at zero of the local covariogram.
We show here that for pixelized sets, the anisotropic perimeter $\Per_{\mathbf{B}}(A; W)$ can be expressed as a finite difference at zero of the local covariogram functionals.

For $n\in\N^* = \N\setminus\{0\}$, we consider the pixels $C^n_k = \frac{1}{n} k +  \left[0,n^{-1}\right]^d, k\in \mathbb{Z}^{d}$, 
that are the cells of the lattice $n^{-1} \Z^d$.
Denote by $\sM_n$ the algebra of $\R^d$ induced by the sets $C_{k}^{n}$, $k\in \Z^d$,
and denote $\W_{n}=\W\cap \M_{n}$.
For any $W\in \W_{n}$, also denote $\M_n(W) = \{A\in\M_n:~A\subset W\}$ the sets of pixelized sets contained inside $W$.
Remark that for any set $A\in \M_{n}$, there is a unique subset $I_{A}$ of $\mathbb{Z}^{d}$ such that $A$ is equivalent in measure to
$\cup _{k\in I_{A}}C_{k}^{n}$.

\begin{proposition}
\label{prop:per_approx_local_covariogram}
Let $\mathbf{B} = (e_1,e_2,\dots,e_d)$ be the canonical basis of $\R^d$ and let $n\in\N^*$.
For all $A\in\sM_n$, $W\in \W_{n}$, and $j\in\{1,\dots,d\}$,
$$
V_{e_j}(A;W) =  \sigma _{n^{-1}e_{j};W}(A) .
$$
Hence, for all $A\in\sM_n$ and $W\in  \W_{n}$,
$\Per_{\mathbf{B}}(A ; W)=  \sum_{j=1}^{d}\sigma _{n^{-1}e_{j};W}(A).$
\end{proposition}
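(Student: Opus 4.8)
My plan is to reduce the identity to a one-dimensional jump count by slicing in the direction $e_j$, exploiting that a pixelized set changes value only across the hyperplanes of the grid $n^{-1}\Z^d$.

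First I would recall the equality established inside the proof of Proposition~\ref{prop:per-approx}: for every $A\in\M$, $W\in\W$, $u\in S^{d-1}$ and $\eps\in\R$ one has $\int_{W\ominus[0,\eps u]}|\Ind_A(x+\eps u)-\Ind_A(x)|\,dx = |\eps|\,\sigma_{\eps u;W}(A)$. Specializing to $u=e_j$ and setting $g(\eps)=\sigma_{\eps e_j;W}(A)$, this reads
$$
g(\eps)=\frac{1}{\eps}\int_{W\ominus[0,\eps e_j]}|\Ind_A(x+\eps e_j)-\Ind_A(x)|\,dx,\qquad \eps>0,
$$
so the proposition amounts to proving $g(n^{-1})=V_{e_j}(A;W)$.

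The core of the argument is to show that $g$ is \emph{constant} on the interval $(0,n^{-1}]$. Once this is known, Proposition~\ref{prop:integral_diff_quotient_and_directional_variation} applied to $f=\Ind_A$ on $U=W$ gives $\lim_{\eps\to0^+}g(\eps)=V_{e_j}(A;W)$, and constancy forces $g(n^{-1})=V_{e_j}(A;W)$, which is the claim; this route avoids invoking any slicing characterisation of $V_{e_j}$ as a black box. Summing the resulting equalities over $j=1,\dots,d$ and using $\PerB(A;W)=\sum_{j=1}^d V_{e_j}(A;W)$ yields the second identity.

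To prove constancy I would compute $g(\eps)$ by Fubini, slicing along $e_j$: writing $x=x'+te_j$ with $x'\in e_j^\perp$, the section of the eroded window satisfies $(W\ominus[0,\eps e_j])^{x'}=W^{x'}\ominus[0,\eps]$, whence $\eps\, g(\eps)=\int_{e_j^\perp}\left(\int_{W^{x'}\ominus[0,\eps]}|f^{x'}(t+\eps)-f^{x'}(t)|\,dt\right)dx'$ with $f^{x'}=\Ind_A(x'+\cdot\,e_j)$. For almost every $x'$ (all those avoiding the grid hyperplanes, a negligible set), the slices $A^{x'}$ and $W^{x'}$ are unions of grid intervals, so $f^{x'}$ is a step function whose jumps lie on $n^{-1}\Z$ and are therefore at least $n^{-1}$ apart, while $W^{x'}$ is a finite union of intervals with grid endpoints. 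The hard part, and the one point requiring care, is the bookkeeping at the erosion boundary: on a connected component $(\alpha,\beta)$ of $W^{x'}$ (with $\alpha,\beta\in n^{-1}\Z$) the eroded component is $(\alpha,\beta-\eps)$, and for each interior grid point $p$ where $f^{x'}$ jumps the set $\{t:t<p<t+\eps\}=(p-\eps,p)$ lies entirely inside $(\alpha,\beta-\eps)$ as soon as $0<\eps\le n^{-1}$, while the endpoints $\alpha,\beta$ contribute nothing. Since the jumps are spaced at least $n^{-1}\ge\eps$ apart these sets are pairwise disjoint and the integrand equals $1$ exactly on their union, so the inner integral equals $\eps$ times the number of interior grid jump points of $f^{x'}$ in $(\alpha,\beta)$. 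This count is independent of $\eps\in(0,n^{-1}]$; summing over the components of $W^{x'}$ and integrating over $x'$ shows that $g$ is constant on $(0,n^{-1}]$, which finishes the proof.
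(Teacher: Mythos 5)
Your proof is correct. At the strategic level it matches the paper's: both arguments show that $\eps\mapsto\sigma_{\eps e_j;W}(A)$ has a rigid form on $(0,n^{-1}]$ and then identify its value at $\eps=n^{-1}$ via the limit $\sigma_{\eps e_j;W}(A)\to V_{e_j}(A;W)$ as $\eps\to 0$ from Proposition~\ref{prop:per-approx}. Where you differ is in how that rigidity is established. The paper works directly in dimension $d$: it expands each of the four covariogram terms as a sum of overlap volumes $\leb(C_k^n\cap(C_l^n+\eps e_j)\cap(W\ominus[-\eps e_j,0]))$ over pairs of pixels, computes these case by case ($n^{-(d-1)}(n^{-1}-\eps)$ for $l=k$, $\eps n^{-(d-1)}$ for $l=k-e_j$, etc.), concludes that $\sigma_{\eps e_j;W}(A)=\alpha/\eps+\beta$ on $(0,n^{-1}]$, and only then uses the bound $\sigma_{\eps e_j;W}(A)\leq V_{e_j}(A;W)$ to force $\alpha=0$. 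You instead reduce to one dimension by Fubini and count jumps of the sections $f^{x'}$, which yields exact constancy in one step and avoids the $\alpha/\eps$ detour; your bookkeeping at the erosion boundary (each interior grid jump $p$ of a component $(\alpha,\beta)$ contributes the full interval $(p-\eps,p)\subset(\alpha,\beta-\eps)$ because $p-\alpha\geq n^{-1}\geq\eps$ and $\beta-p\geq n^{-1}\geq\eps$, while the endpoints contribute nothing) is the right check and is carried out correctly. The one caveat, which your argument shares with the paper's rather than introduces, is that both computations use the actual set-theoretic erosion $W\ominus[0,\eps e_j]$ and therefore implicitly read $W\in\W_n$ as a genuinely pixelized open window (so that, in your version, almost every section $W^{x'}$ is a finite union of intervals with endpoints in $n^{-1}\Z$), not merely an open set Lebesgue-equivalent to a union of pixels; this is consistent with the paper's stated intention that ``the closures of the $W_i$ are pixelized sets.''
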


\begin{proof}
Let $0<\varepsilon \leq n^{-1}$. Consider the quantity
$$
\begin{aligned}
\delta _{\varepsilon e_{j};W\ominus [-\varepsilon e_{j},0]}(A)&=\leb(A\cap (\varepsilon e_{j}+A)\cap (W\ominus [-\varepsilon e_{j},0] ))\\
&=\leb\left( \left( \bigcup _{k\in I_{A}} C_{k}^{n}\right)\cap \left( \bigcup _{l\in I_{A}}(\varepsilon e_{j}+C_{l}^{n}) \right)\cap (W\ominus [-\varepsilon e_{j},0] )\right).
\end{aligned}
$$
The two unions are over sets with pairwise negligible intersection, whence 
$$
\begin{aligned}
\delta _{\varepsilon e_{j};W\ominus [-\varepsilon e_{j},0]}(A)&=\sum_{k,l\in I_{A}}\leb(C_{k}^{n}\cap (C_{l}^{n}+\varepsilon e_{j})\cap ( W\ominus [-\varepsilon e_{j},0] )).
\end{aligned}
$$
Since $0<\varepsilon \leq n^{-1}$, for $k,l\in I_{A}$, 
$$
\begin{aligned}
\leb(C_{k}^{n}\cap (C_{l}^{n}+\varepsilon e_{j})\cap ( W\ominus [-\varepsilon e_{j},0] ))=
\begin{cases}
n^{-(d-1)}(n^{-1}-\varepsilon ) & \text{if $l=k\in I_{W}$},\\
\varepsilon n^{-(d-1)} & \text{if $l=k-e_{j}$ and $k,l\in I_{W}$},\\
0  & \text{otherwise}.
\end{cases}
\end{aligned}
$$
These assertions are straightforward,  one simply has to be cautious in the case $l=k-e_{j},k\in I_{W},l\notin I_{W}$, contribution of which is $0$. 
Summing up those contributions and doing similar computations for the quantities $\delta _{0;W\ominus [-\varepsilon e_{j},0]}(A),\delta _{-\varepsilon e_{j};W\ominus [0,\varepsilon e_{j} ]}(A),$ and $\delta _{0;W\ominus [0,\varepsilon e_{j} ]}(A)$ yields that, for some real numbers $\alpha ,\beta $ independent of $\varepsilon $, for all $\varepsilon \in (0, n^{-1}]$, 
$$
\begin{aligned}
\sigma _{\varepsilon e_{j};W}(A)=\frac{\alpha}{\varepsilon} + \beta.
\end{aligned}
$$
Proposition~\ref{prop:per-approx} then implies $\alpha =0$, $\beta =V_{e_{j}}(A;W)$, which yields the desired conclusion with $\varepsilon = n^{-1}$.
\end{proof}

\section{Realisability result}

\label{sec:proof}

In this section we explicit some considerations related to our realisability result and give its proof.

\subsection{Realisability problem and regularity modulus}
\label{sec:main-proof}

Recall that the local covariogram of a RAMS $X$ is $\gamma_{X}(y;W)=\E \delta _{y;W}(X)$.
Introduce a regularized realisability problem for local covariogram.
Put $U_n=(-n,n)^{d}$. 
Define the \textit{weighted anisotropic perimeter} by
\begin{equation*}
\PerB^{\beta }(A) = \sum_{n\geq 1}\beta_{n} \PerB(A;U_{n}),
\end{equation*}
where the sequence $(\beta_n)$ is set to $\beta_n = 2^{-n} (2n)^{-d}$ so that $\sum_{n\geq 1} \beta_n\L^d(U_n) = 1$.
For a given function $\gamma:\R^d\times\W \rightarrow \R$, define 
$$
\sigma_{\gamma } (u;W)
=
\frac{1}{\|u\|}[
\gamma(0;W\ominus[-u,0]) -  \gamma(u;W\ominus[-u,0])\\
+ \gamma(0;W\ominus[0,u]) - \gamma(-u;W\ominus[0,u])].
$$
Define for all windows $W\in \W$ the constant
$L_j(\gamma, W)\in [0,+\infty]$ by
\begin{equation}
\begin{aligned}
L_{j}(\gamma ,W)=\sup_{\eps \in \mathbb{R}}\sigma _{\gamma }(\eps e_{j};W),\quad j\in\{1,\dots,d\}.
\label{eq:definition_Lj_constants_as_sup}
\end{aligned}
\end{equation}
$L_{j}(\gamma ,W)$ is related to the Lipschitz property of $\gamma $ in its spatial variable.
The motivation for considering this particular constant comes from
Corollary~\ref{cor:tcd-per}
which shows that if $\gamma_X$ is the local covariogram of a RAMS $X$, then
$$
\begin{aligned}
\E V_{e_{j}}(X;W) &= \sup_{\eps\in\R} \sigma _{\gamma _{X}}(\eps e_ j;W ).
\end{aligned}
$$

\begin{theorem}
\label{thm:realisability_over_Rd_perBbeta}
Let $\gamma:\R^d\times \W \rightarrow\R$ be a  function and $r\geq 0$.
Then $\gamma$ is realisable by a RAMS $X$ such that 
\begin{equation*}
\E \PerB^{\beta }(X)\leq r
\end{equation*}
if and only if $\gamma  $ is admissible (see Definition~\ref{def:covariogram_admissible_measurable}) and 
\begin{equation}
\label{eq:avg-perimeter}
\sum_{n\geq 1}\beta_{n}\left(\sum_{j=1}^d L_j (\gamma  , U_{n})\right)\leq r,
\end{equation}
where for all $j\in\{1,\dots,d\}$ and $n\geq 1$, the constant $L_j (\gamma, U_{n})$ is defined by~\eqref{eq:definition_Lj_constants_as_sup}.
\end{theorem}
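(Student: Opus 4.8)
The plan is to prove the two implications separately, the forward one being essentially a repackaging of the perimeter approximation already established and the converse requiring the positive-operator machinery of~\cite{LacMol}. For the direct implication I would suppose $\gamma=\gamma_X$ for a RAMS $X$ with $\E\PerB^{\beta}(X)\leq r$. Admissibility is then immediate: if $c+\sum_i a_i\delta_{y_i;W_i}(A)\geq 0$ for every $A\in\M$, evaluating at $A=X(\omega)$ and taking expectations gives $c+\sum_i a_i\gamma(y_i;W_i)\geq 0$, which is exactly Definition~\ref{def:covariogram_admissible_measurable}. For the regularity bound I would invoke Corollary~\ref{cor:tcd-per}, which (as recorded just before the statement) gives $\E V_{e_j}(X;U_n)=\sup_{\eps}\sigma_{\gamma_X}(\eps e_j;U_n)=L_j(\gamma,U_n)$; summing over $j$ and over $n$ with weights $\beta_n$ turns the left-hand side into $\E\PerB^{\beta}(X)$ and the right-hand side into the sum in~\eqref{eq:avg-perimeter}, so the latter is $\leq r$.

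For the converse I set up the positive-operator framework. Let $H$ be the linear span in $\R^{\M}$ of the constant function $\one$ and of all evaluations $A\mapsto\delta_{y;W}(A)$, $y\in\R^d$, $W\in\W$, and let $\Phi:H\to\R$ be the functional of~\eqref{eq:Phi-def}. Admissibility, applied simultaneously to a combination and to its opposite, shows that $\Phi$ is well defined (it respects the linear relations among the $\delta_{y;W}$) and positive, i.e. $g\geq 0$ on $\M$ forces $\Phi(g)\geq 0$, while $\Phi(\one)=1$. The difficulty is that $\M$ is non-compact, so positivity of $\Phi$ alone does not yield a representing probability measure: one must supply a regularity modulus with compact sublevel sets, and the natural candidate is $\psi=\PerB^{\beta}$.

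The heart of the argument is to verify that $\psi$ qualifies as a regularity modulus in the sense of~\cite{LacMol}, which has two parts. First, the sublevel sets $\{A\in\M:\PerB^{\beta}(A)\leq t\}$ are compact for the local convergence in measure: a bound on $\PerB^{\beta}$ forces $\PerB(A;U_n)\leq t/\beta_n$ in each window $U_n$, and the BV compactness theorem together with a diagonal extraction over $n$ yields an $L^1_{\loc}$-convergent subsequence whose limit is again an indicator, the set of indicators being closed in $L^1_{\loc}$. Second, $\psi$ must be a pointwise supremum of members of $H$ with $\Phi$-values controlled by $r$: by Proposition~\ref{prop:per-approx} one has $\PerB(A;U_n)=\sup_{\eps\in\Q}\sum_{j=1}^d\sigma_{\eps e_j;U_n}(A)$, where each $\sum_j\sigma_{\eps e_j;U_n}$ lies in $H$ (its eroded windows $U_n\ominus[\cdot]$ belong to $\W$) and satisfies $\sum_j\sigma_{\eps e_j;U_n}(A)\leq\PerB(A;U_n)$; hence the truncated approximants $\sum_{n=1}^{N}\beta_n\sum_j\sigma_{\eps_n e_j;U_n}$ form a countable family in $H$ whose pointwise supremum is $\psi$ and whose images under $\Phi$ are all bounded by $\sum_n\beta_n\sum_j L_j(\gamma,U_n)\leq r$ via~\eqref{eq:avg-perimeter}.

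With these facts established, the representation theorem of~\cite{LacMol} supplies a probability measure $\mu$ on $\M$ with $\Phi(g)=\int_{\M}g\,d\mu$ for all $g\in H$ and, by monotone convergence along the approximants, $\int_{\M}\psi\,d\mu\leq r$. Taking $X$ to be the canonical RAMS of law $\mu$, the identity on the $\delta_{y;W}$ reads $\gamma(y;W)=\int_{\M}\delta_{y;W}\,d\mu=\E\delta_{y;W}(X)=\gamma_X(y;W)$, so $\gamma$ is realised, while $\E\PerB^{\beta}(X)=\int_{\M}\psi\,d\mu\leq r$ gives the perimeter bound. I expect the main obstacle to be precisely this verification that $\psi$ is an admissible regularity modulus: matching the $\sup_{\eps}$ and Minkowski-eroded-window structure of the covariogram approximation of the perimeter with the exact hypotheses of the abstract theorem (that $\psi$ be an increasing countable supremum of $\psi$-dominated cone elements with uniformly $\Phi$-bounded images) and, above all, establishing compactness of the sublevel sets, since that is what upgrades the merely finitely additive positive functional $\Phi$ into a genuine $\sigma$-additive probability measure.
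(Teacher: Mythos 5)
Your necessity argument is correct and coincides with the paper's: admissibility from positivity of the expectation, and $L_j(\gamma,U_n)=\sup_{\eps}\sigma_{\gamma_X}(\eps e_j;U_n)=\E V_{e_j}(X;U_n)$ from Corollary~\ref{cor:tcd-per}, then sum with the weights $\beta_n$.

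The sufficiency half, however, has a genuine gap: you have misstated the hypothesis of the abstract representation theorem. Theorem~\ref{thm_lacmol_realisability} does \emph{not} ask that the regularity modulus $\chi=\PerB^{\beta}$ have compact sublevel sets and be a pointwise supremum of elements of $G$ with $\Phi$-values bounded by $r$; it asks for the sup--inf condition \eqref{eq:sup-inf-condition}, namely that for \emph{every} test functional $g=c+\sum_i a_i\delta_{y_i;W_i}$ one has $\inf_{A\in\M}\bigl[\PerB^{\beta}(A)-g(A)\bigr]+\Phi(g)\leq r$. Equivalently, $\Phi$ must extend positively to $G+\R\chi$ with $\Phi(\chi)\leq r$. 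Your verification only treats the special $g$'s that are truncated perimeter approximants (for which the inequality is indeed immediate since $g\leq\chi$ and $\Phi(g)\leq r$); it says nothing about an arbitrary $g$, and nothing in your argument lets admissibility control $\inf_A[\chi(A)-g(A)]$ for such $g$, because $\chi\notin G$ so positivity of $\Phi$ cannot be applied to $\chi-g$ directly.

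This is precisely where the paper's real work lies, and it is absent from your proposal. The paper restricts to the subspaces $G_n$ of combinations with shifts in $n^{-1}\Z^d$ and pixelized windows, replaces $\PerB^{\beta}$ by a functional $g_{n,p}\in G_n$ that agrees with it \emph{exactly} on pixelized sets $\M_n^p$ (Lemma~\ref{lem:weighted_perimeter_gnp_functional-bis}, built on Proposition~\ref{prop:per_approx_local_covariogram}), and then invokes the key combinatorial fact (Lemma~\ref{lem:extrema_local_covariogram}) that any $g\in G_n$ attains its infimum over all of $\M$ at a pixelized set --- proved by identifying $g(A)$ with a linear form $\langle\beta,C(A)\rangle$ on the correlation polytope and using that its extreme points come from deterministic $\{0,1\}$-vectors. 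Only then can one write $\inf_A[g_{n,p}-g](A)\leq\Phi(g_{n,p}-g)$ by admissibility and bound $\Phi(g_{n,p})$ by $\sum_m\beta_m\sum_j L_j(\gamma,U_m)\leq r$ using the monotonicity of $W\mapsto L_j(\gamma,W)$ (Lemma~\ref{lem:Lj_constants_are_increasing}). A further step you omit, forced by the restriction to $G^*=\bigcup_n G_n$, is extending the resulting identity $\gamma_X=\gamma$ from rational shifts and pixelized windows to all of $\R^d\times\W$, which requires the local Lipschitz continuity of admissible functions satisfying \eqref{eq:avg-perimeter} (Lemma~\ref{lem_admissible_Delta_are_locally_lipschitz}). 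Your compactness discussion for the sublevel sets is correct and matches the paper, but it addresses only the easier of the two requirements.
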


%
 
The stationary counterpart of the above theorem is stated and proved in Section~\ref{subsubsec:proof_realisability_Rd_stationary}. 
Let us specialize a general definition from~\cite[Def.~2.5]{LacMol} to our framework.

\begin{definition}[Regularity moduli]
Let $G$ be a vector space of measurable real functions on $\M$.
A $G$-\emph{regularity modulus} on $\sM $  is a lower semi-continuous function
$\chi: \M \mapsto [0,+\infty]$ such that, for all $g\in G$,
the level set
$$
H_g =\left\{ A\in\M :~ \chi(A)\leq g(A) \right\} \subset \sM 
$$
is relatively compact for the convergence in measure.
\end{definition}

We give the following result, straightforward consequence of Proposition 2.2 and Theorem 2.6 in~\cite{LacMol} for bounded continuous functions, see in particular the discussion after the proof of Theorem 2.6.

\begin{theorem}[Lachi\`eze-Rey - Molchanov~\cite{LacMol}]
\label{thm_lacmol_realisability}
Let $G$ be a vector space of real continuous bounded functions  on $\M$ that comprises constant functions. Let $\chi $ be a ${G}$-regularity modulus, and $\Phi $ be a linear function on $G$ such that $\Phi (1)=1$. Then, for any given $r\geq 0$, there exists a RAMS $X\in \M$ such that 
\begin{equation}
\label{eq:real-pb-general}
\begin{cases}
\E g(X)=\Phi (g),\quad g\in G,\\
\E \chi (X)\leq r
\end{cases}
\end{equation}
if and only if
\begin{equation}
\label{eq:sup-inf-condition}
\sup_{g\in G}\inf_{A\in \M}\chi (A)-g(A)+\Phi (g)\leq r.
\end{equation}
\end{theorem}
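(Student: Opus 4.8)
The plan is to recast \eqref{eq:real-pb-general} as the problem of representing $\Phi$ by a Borel probability measure $\mu$ on $\M$ (the law of $X$) with $\int g\,d\mu=\Phi(g)$ for all $g\in G$ and $\int\chi\,d\mu\leq r$, and to establish the equivalence with \eqref{eq:sup-inf-condition} by a Hahn--Banach extension whose countable additivity is secured by the regularity modulus. \emph{Necessity} is immediate (weak duality): if such a $\mu$ exists, then for every $g\in G$, using that $\mu$ is a probability measure and $\int g\,d\mu=\Phi(g)$,
\[
\inf_{A\in\M}\big(\chi(A)-g(A)\big)+\Phi(g)\leq \int(\chi-g)\,d\mu+\Phi(g)=\int\chi\,d\mu\leq r ,
\]
and taking the supremum over $g$ gives \eqref{eq:sup-inf-condition}.

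\emph{Sufficiency} is the substantial direction. Assuming \eqref{eq:sup-inf-condition}, I would define on $C_b(\M)$ the sublinear functional
\[
p(f)=\inf\Big\{\Phi(g)+\lambda r:\ g\in G,\ \lambda\geq 0,\ f\leq g+\lambda\chi\ \text{pointwise on}\ \M\Big\}.
\]
Choosing $g=\|f\|_\infty\cdot 1\in G$ and $\lambda=0$ shows $p$ is finite and $p\leq\|\cdot\|_\infty$. The role of \eqref{eq:sup-inf-condition} is to force $\Phi\leq p$ on $G$: if $g_0\leq g+\lambda\chi$ with $g,g_0\in G$ and $\lambda>0$, then $h=\lambda^{-1}(g_0-g)\in G$ satisfies $\chi-h\geq 0$, so $\inf_A(\chi-h)\geq 0$ and \eqref{eq:sup-inf-condition} yields $\Phi(h)\leq r$, i.e. $\Phi(g_0)\leq\Phi(g)+\lambda r$; the boundary constraint $\lambda=0$, i.e. $g_0\leq g$, also gives $g_0\leq g+\lambda'\chi$ for every $\lambda'>0$ since $\chi\geq 0$, and letting $\lambda'\downarrow 0$ produces $\Phi(g_0)\leq\Phi(g)$. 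Hence $\Phi(g_0)\leq p(g_0)$, while $p(g_0)\leq\Phi(g_0)$ trivially. Hahn--Banach then extends $\Phi$ to a linear $\tilde\Phi$ on $C_b(\M)$ with $\tilde\Phi\leq p$, from which one reads off positivity ($f\leq 0\Rightarrow\tilde\Phi(f)\leq p(f)\leq 0$), the normalization $\tilde\Phi(1)=\Phi(1)=1$, and the modulus bound $\tilde\Phi(h)\leq r$ for every $h\in C_b(\M)$ with $h\leq\chi$ (take $g=0,\lambda=1$).

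It remains to represent $\tilde\Phi$ by a countably additive probability measure, and this is the crux. Applying the regularity-modulus definition to the constant function $g\equiv c\in G$ shows that each sublevel set $\{\chi\leq c\}$ is relatively compact; let $K_c$ be its closure. If $f\in C_b(\M)$ satisfies $0\leq f\leq 1$ and $f\equiv 0$ on $K_c$, then $cf\leq\chi$ pointwise, whence $\tilde\Phi(f)\leq r/c$. This tightness estimate lets one invoke a Riesz-type representation theorem on the Polish space $\M$ to obtain a Radon probability measure $\mu$ with $\int f\,d\mu=\tilde\Phi(f)$ for $f\in C_b(\M)$; in particular $\int g\,d\mu=\Phi(g)$ on $G$. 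Writing $\chi$ as the increasing limit of the bounded continuous functions $\chi_n(A)=\big(\inf_{B}(\chi(B)+n\,\dmet(A,B))\big)\wedge n\leq\chi$ and applying monotone convergence gives $\int\chi\,d\mu=\lim_n\tilde\Phi(\chi_n)\leq r$, so any $X$ with law $\mu$ solves \eqref{eq:real-pb-general}.

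The main obstacle is precisely the last step: the passage from the finitely additive functional $\tilde\Phi$ to a genuine (countably additive) law on the \emph{non-compact} space $\M$. A positive normalized functional on $C_b$ of a general Polish space need not be represented by a measure, as mass may escape to infinity; the relative compactness of the sublevel sets $\{\chi\leq c\}$, built into the regularity-modulus hypothesis, is exactly what produces the tightness estimate $\tilde\Phi(f)\leq r/c$ and thereby makes the Riesz representation applicable. Everything else (the Hahn--Banach sandwich, positivity, monotone convergence) is routine once this tightness is in hand.
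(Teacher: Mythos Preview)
The paper does not actually prove this theorem: it is quoted from~\cite{LacMol} as a ``straightforward consequence of Proposition~2.2 and Theorem~2.6'' there, with no argument reproduced. Your Hahn--Banach construction of the sublinear envelope $p$, the verification that $\Phi=p$ on $G$ via condition~\eqref{eq:sup-inf-condition}, the deduction of positivity and of the bound $\tilde\Phi(h)\leq r$ for $h\leq\chi$, and the Moreau--Yosida approximation $\chi_n\uparrow\chi$ to recover $\int\chi\,d\mu\leq r$ are all correct and constitute the standard route to such representation results; this is presumably close in spirit to what is done in~\cite{LacMol}. The only place that would benefit from a sharper reference is the passage from the positive normalized functional $\tilde\Phi$ on $C_b(\M)$ to a countably additive probability measure: you rightly identify tightness (via compactness of the sublevel sets $\{\chi\leq c\}$, which are closed since $\chi$ is lower semicontinuous and relatively compact by the regularity-modulus hypothesis) as the decisive ingredient, but the precise Daniell--Stone or Riesz--Prokhorov statement on a non-locally-compact Polish space under a tightness condition should be cited explicitly rather than invoked as ``a Riesz-type representation theorem''. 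Apart from that expository point, the argument is complete.
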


In our setting, call $G$ the vector space generated by the constant functionals and the local covariogram functionals $A\mapsto \delta_{y;W}(A)$, $y\in\R^d$, $W\in\W$.

\begin{proposition}
$\PerB^\beta$ is a $G$-regularity modulus (and therefore a $G^{*}$-regularity modulus for any subspace $G^{*}\subset G$).
\end{proposition}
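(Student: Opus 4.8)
The plan is to verify directly the two defining properties of a $G$-regularity modulus: that $\PerB^{\beta}$ is lower semi-continuous for the local convergence in measure, and that for every $g\in G$ the level set $H_g=\{A\in\M:\PerB^{\beta}(A)\le g(A)\}$ is relatively compact. For the lower semi-continuity, I would observe that $\PerB^{\beta}=\sum_{n\ge 1}\beta_n\PerB(\,\cdot\,;U_n)$ is a countable sum, with nonnegative coefficients $\beta_n$, of the nonnegative functionals $A\mapsto\PerB(A;U_n)$. Each of these is lower semi-continuous for the convergence in measure, being the finite sum $\sum_{j=1}^d V_{e_j}(\,\cdot\,;U_n)$ of directional variations, which are lower semi-continuous by \cite{Ambrosio_Fusco_Pallara_functions_of_bounded_variation_free_discontinuity_problems_2000}. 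The partial sums $\sum_{n\le N}\beta_n\PerB(\,\cdot\,;U_n)$ are then lower semi-continuous as finite sums of such functions, and $\PerB^{\beta}$ is their pointwise supremum (a monotone limit of nonnegative terms), hence lower semi-continuous.

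The key observation for relative compactness is that every $g\in G$ is a \emph{bounded} function on $\M$: writing $g=c+\sum_{i=1}^q a_i\delta_{y_i;W_i}$ and using $0\le\delta_{y;W}(A)\le\leb(W)$, one gets $|g(A)|\le M$ with $M=|c|+\sum_i|a_i|\leb(W_i)<\infty$ uniformly in $A$. Consequently $H_g\subset\{A\in\M:\PerB^{\beta}(A)\le M\}$, so it suffices to show that this genuine sublevel set of $\PerB^{\beta}$ is relatively compact. On it, for each fixed $n$ the inequality $\beta_n\PerB(A;U_n)\le\PerB^{\beta}(A)\le M$ together with \eqref{eq:relation-Per-PerB} gives the uniform bound $\Per(A;U_n)\le\PerB(A;U_n)\le M/\beta_n$, while trivially $\leb(A\cap U_n)\le\leb(U_n)$. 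Thus the indicators $\{\Ind_A:A\in H_g\}$ have uniformly bounded $BV(U_n)$-norms on each cube $U_n$.

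I would then produce a limit by the compactness theorem for sets of finite perimeter~\cite{Ambrosio_Fusco_Pallara_functions_of_bounded_variation_free_discontinuity_problems_2000}. Given any sequence $(A_k)$ in $H_g$, the uniform bound on $U_1$ (a bounded open set with Lipschitz boundary) furnishes a subsequence converging in $L^1(U_1)$; refining successively on $U_2,U_3,\dots$ and passing to a diagonal subsequence yields convergence in $L^1(U_n)$ for every $n$, that is, convergence in $L^1_{\loc}(\R^d)$, equivalently local convergence in measure. Since the set of indicator functions is closed in $L^1_{\loc}(\R^d)$, the limit is $\Ind_A$ for some $A\in\M$, so the diagonal subsequence converges in $\M$. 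As $\M$ is metrizable, this sequential extraction establishes that $H_g$ is relatively compact, and the final assertion (that $\PerB^{\beta}$ is a fortiori a $G^{*}$-regularity modulus for any subspace $G^{*}\subset G$) is immediate since $H_g$ ranges over a subfamily of level sets.

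The step I expect to be the main obstacle is the passage from uniform perimeter bounds on each fixed window to a single locally convergent subsequence: it relies on invoking the $BV$ compactness theorem cube by cube and stitching the extractions together by a diagonal argument, while making sure the single bound on $\PerB^{\beta}$ transfers to a separate bound on each $\Per(\,\cdot\,;U_n)$. The decisive simplification, without which the level sets would not reduce to sublevel sets of $\PerB^{\beta}$, is the boundedness of $g$ coming from $0\le\delta_{y;W}(A)\le\leb(W)$.
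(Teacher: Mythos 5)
Your proof is correct and follows essentially the same route as the paper: reduce the level set $H_g$ to a constant sublevel set of $\PerB^{\beta}$, extract uniform perimeter bounds $\PerB(A;U_m)\le c/\beta_m$ on each cube, and conclude by the compactness theorem for sets of locally finite perimeter. You merely make explicit two points the paper leaves implicit — the uniform bound $|g|\le M$ coming from $0\le\delta_{y;W}\le\leb(W)$, and the lower semi-continuity of $\PerB^{\beta}$ as a monotone sum of lower semi-continuous functionals — and you re-derive the diagonal extraction that the paper delegates to the cited compactness theorem.
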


\begin{proof}
By definition of a regularity modulus, one has to show that the $\PerB^\beta$-level sets are relatively compact.
Consider a sequence $(A_n)$ such that $\PerB^\beta(A_n)\leq c$ for all $n\in\N$.
Then, for all $n, m\in\N$, $\PerB(A_n;U_m) \leq \frac{c}{\beta_m} <\infty$, and thus $(A_n)$ is a sequence of sets of locally finite perimeter whose perimeter in any open bounded set $U\subset\R^d$ is uniformly bounded.
According to~\cite[Theorem 3.39]{Ambrosio_Fusco_Pallara_functions_of_bounded_variation_free_discontinuity_problems_2000}, there exists a subsequence of $(A_n)$ that locally converges in measure in $\R^d$.
\end{proof}

For $g\in G$, denote by $\dom(g)$ the smallest open set   such that for every measurable set $A$, $g(A)=g(A\cap \dom(g))$.
If $g$ has the form 
\begin{equation}
\label{eq:g-representation}
g=\sum_{i=1}^{q}a_{i}\delta _{y_{i};W_{i}},
\end{equation}
we have $\dom(g)\subset \cup _{i}(W_{i}\cup (-y_{i}+W_{i}))$,  there is not equality because such a decomposition is not unique.

We can  turn to the proof of Theorem~\ref{thm:realisability_over_Rd_perBbeta}.
It involves several technical lemmas that are stated within the proof when needed.
Their demonstrations are delayed to the end of the section.

\begin{proof}[Proof of Theorem~\ref{thm:realisability_over_Rd_perBbeta}]

\textsl{Necessity:}
If $X$ is a RAMS, then the admissibility of $\gamma_{X}$ is the consequence of the positivity of the mathematical expectation, see the discussion below Definition~\ref{def:covariogram_admissible_measurable}.
According to Proposition~\ref{prop:per-approx},  for all $n\geq 1$, with probability $1$,$$
\sigma _{\eps e_{j};U_{n}}(X) \leq V_{e_j}(X;U_{n}).
$$
After taking the expectation, the supremum of the left hand member over $\eps >0$ is $L_{j}(\gamma ,W)$.
Summing over $j$ yields $\sum_{j=1}^{d}L_{j}(\gamma , U_{n})\leq \PerB(X ; U_n)$, and multiplying by $\beta_n$ and summing over $n$ yields \eqref{eq:avg-perimeter}.

\smallskip

\noindent\textsl{Sufficiency:}
Call $G_n\subset G$ the set of functionals $g = c + \sum_{i=1}^q a_i \delta_{y_i;W_i}$ such that for all $i$, $y_i\in n^{-1}\Z^d$,
and $W_i\in \W_{n}$, \emph{i.e.} the closures of the $W_i$ are pixelized sets.
Denote by $G^\ast = \bigcup_{n\geq 1} G_n$.
Remark that each $G_n$ is a vector space and that $G^\ast\subset G$ is a vector space as well:
Indeed, if $g_1\in G_n$ and $g_2\in G_m$, then  $g_1+g_2\in G_{mn}$.
To apply Theorem~\ref{thm_lacmol_realisability} to $G^{*}$ we need to show that,
$$
\sup_{n\geq 1} \sup_{g\in G_n} \inf_{A\in\M} \PerB^{\beta}(A) - g(A) + \Phi(g) \leq r,
$$
where $\Phi$ is defined by 
$$
\Phi\left(c + \sum_{i=1}^{q} a_{i}\delta _{y_{i};W_{i}}\right) = c + \sum_{i=1}^{q} \gamma(y_i, W_i).
$$
First remark that $\Phi $ is a positive operator because $\gamma  $ is admissible, see Definition~\ref{def:covariogram_admissible_measurable}. Let $g\in G_n$. Define $m_{g}= \inf_{A\in\M} \PerB^{\beta}(A) - g(A) + \Phi(g).$
Let $p\in \N$ large enough such that $\dom(g)\subset (-p,p)^d$.
For all $c>0$ denote $\M_{n}^{c}=\M_{n}((-c,c)^{d})$.
We have
\begin{equation*}
m_{g}  \leq  \inf_{A\in\M_{n}^{p }} \PerB^{\beta}(A) - g(A) + \Phi(g)
\end{equation*}
because $\M_{n}^{p }\subset \M$.
The proof is based on an approximation of the perimeter by a discretized functional with compact domain, summarized by  the following lemma.

\begin{lemma}
\label{lem:weighted_perimeter_gnp_functional-bis}
For $n,p\geq 1$, put $U_n^p =(-p-1/n,p+1/n)^{d}$. There exists $g_{n,p}\in G_{n}$ with $\dom(g_{n,p})\subset U_n^p $ such that 
\begin{equation*}
g_{n,p}(A)=\PerB^{\beta}(A)
\end{equation*}
for all $A\in\M_{n}^{p}$.
Its explicit expression is
$$
\begin{aligned}
  g_{n,p}(A) =\sum_{m = 1}^{p} \beta_{m} 
\sum_{j=1}^d  \sigma _{n^{-1}e_{j};U_{n}}(A)+ \left(\sum_{m = p+1}^{+\infty} \beta_{m}\right) 
\sum_{j=1}^d \sigma _{n^{-1}e_{j};U_{p}}(A).\end{aligned}
$$
Furthermore, for all $A\in \M_{n}$, 
\begin{equation*}
| g_{n,p}(A)-g_{n,p}(A\cap (-p,p)^{d}) |\leq E_{n,p},
\end{equation*}
where $E_{n,p}=8 d n 2^{-p}(p+1)^{-d} \left( \left(p+\tfrac{1}{n}\right)^d - p^d  \right)$.
\end{lemma}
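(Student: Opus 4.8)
The plan is to build $g_{n,p}$ directly out of Proposition~\ref{prop:per_approx_local_covariogram}, which says that for a pixelized set the anisotropic perimeter in a pixelized window is \emph{exactly} the scale-$n^{-1}$ finite difference of local covariograms, i.e. $\PerB(A;W)=\sum_{j=1}^d\sigma_{n^{-1}e_j;W}(A)$ for $A\in\M_n$ and $W\in\W_n$. Starting from $\PerB^{\beta}(A)=\sum_{m\geq 1}\beta_m\PerB(A;U_m)$, I would first note that for $A\in\M_n^p$, i.e. $A\subset(-p,p)^d$ up to a negligible set, the anisotropic perimeter measure $\PerB(A;\cdot)$ is a finite measure concentrated on the essential boundary of $A$, which lies in $\overline{(-p,p)^d}=[-p,p]^d$. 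Since both $U_m$ (for $m\geq p+1$) and the enlarged window $U_n^p=(-p-1/n,p+1/n)^d$ contain $[-p,p]^d$, one has $\PerB(A;U_m)=\PerB(A;U_n^p)$ (both equal the total perimeter) for every $m>p$. This collapses the infinite tail of the series into the single term $\bigl(\sum_{m>p}\beta_m\bigr)\PerB(A;U_n^p)$, and applying Proposition~\ref{prop:per_approx_local_covariogram} to each window $U_1,\dots,U_p$ and to $U_n^p$ turns every perimeter into a sum of $\sigma$'s, producing the announced closed form and the identity $g_{n,p}(A)=\PerB^{\beta}(A)$ on $\M_n^p$.

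Next I would check the two structural claims. Membership $g_{n,p}\in G_n$ is immediate once one observes that $g_{n,p}$ is a finite linear combination of evaluations $\delta_{y;W}$ whose shifts $y\in\{0,\pm n^{-1}e_j\}$ lie in $n^{-1}\Z^d$ and whose windows $U_m\ominus[\pm n^{-1}e_j,0]$ and $U_n^p\ominus[\pm n^{-1}e_j,0]$ are lattice-aligned boxes, hence belong to $\W_n$. For the domain, I would compute $\dom(\delta_{y;W'})=W'\cup(-y+W')$ termwise: for the windows $W'=U_m\ominus[\cdot]$ with $m\leq p$ the translates recombine inside $U_m\subset(-p,p)^d$, while for $W'=U_n^p\ominus[\cdot]$ they recombine inside $U_n^p$. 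Thus $\dom(g_{n,p})\subset U_n^p$, and, crucially for the last part, the first $p$ blocks have domain contained in $(-p,p)^d$.

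Finally, for the stability estimate I would exploit precisely this localization: because the windows of the first $p$ blocks sit inside $(-p,p)^d$, those blocks are unchanged when $A$ is replaced by $A\cap(-p,p)^d$, so only the tail block on $U_n^p$ contributes to $g_{n,p}(A)-g_{n,p}(A\cap(-p,p)^d)$, and only through the shell $U_n^p\setminus(-p,p)^d$. I would bound each of the four covariogram differences inside $\sigma_{n^{-1}e_j;U_n^p}$ by Proposition~\ref{prop:continuity_local_covariogram}\,(iii), giving $|\delta_{y;W}(A)-\delta_{y;W}(A\cap(-p,p)^d)|\leq 2\,\L^d\bigl((A\setminus(-p,p)^d)\cap(W\cup(-y+W))\bigr)\leq 2\,\L^d(U_n^p\setminus(-p,p)^d)$ with $\L^d(U_n^p\setminus(-p,p)^d)=2^d\bigl((p+\tfrac1n)^d-p^d\bigr)$. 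Multiplying by the prefactor $\|n^{-1}e_j\|^{-1}=n$, by the four $\delta$-terms of each $\sigma$, summing over the $d$ directions, and weighting by $\sum_{m>p}\beta_m\leq 2^{-p}\bigl(2(p+1)\bigr)^{-d}$ collects into $8dn\,2^d\bigl((p+\tfrac1n)^d-p^d\bigr)\cdot 2^{-p}\bigl(2(p+1)\bigr)^{-d}=E_{n,p}$, the factor $2^d$ from the shell volume cancelling the $2^{-d}$ from the tail weight. The main delicate point is this bookkeeping: one must verify that the first $p$ blocks genuinely localize inside $(-p,p)^d$ so that the shell is seen by the tail block alone, and track the constants so that the final expression matches $E_{n,p}$ exactly; the perimeter-collapse argument in the first paragraph is the only other place requiring care, and it rests on the standard fact that $\PerB(A;\cdot)$ charges only the essential boundary.
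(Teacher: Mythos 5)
Your proposal is correct and follows essentially the same route as the paper's proof: collapse the tail of the series using the fact that $\PerB(A;U_m)=\PerB(A;U_n^p)$ for $A\subset(-p,p)^d$ and $m>p$, convert each perimeter to $\sigma$'s via Proposition~\ref{prop:per_approx_local_covariogram}, and bound the difference $|g_{n,p}(A)-g_{n,p}(A\cap(-p,p)^d)|$ by localizing the first $p$ blocks inside $(-p,p)^d$ and applying Proposition~\ref{prop:continuity_local_covariogram}(iii) to the tail block over the shell $U_n^p\setminus(-p,p)^d$. Your constant bookkeeping ($8dn$ per $\sigma$-block, $\sum_{m>p}\beta_m\leq 2^{-p-d}(p+1)^{-d}$, shell volume $2^d((p+\tfrac1n)^d-p^d)$) matches the paper's exactly, and you correctly read $U_m$ and $U_n^p$ for the windows that are misprinted as $U_n$ and $U_p$ in the lemma's displayed formula.
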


Therefore, $g_{n,p}= \PerB^{\beta }$ on $\M_{n}^{p}$, and
$$
\begin{aligned}
m_{g}&\leq  \inf_{A\in \M_{n}^{p }  } g_{n,p}(A)-g(A)+\Phi (g)\\
&\leq  \inf_{A\in \M_{n}^{p+1/n}  } g_{n,p}(A)-g(A)+\Phi (g)+E_{n,p},
\end{aligned}
$$
because $g(A)=g(A\cap \dom(g))=g(A\cap (-p,p)^{d})=g(A\cap (-p-1/n,p+1/n)^{d})$,
and
$| g_{n,p}(A)-g_{n,p}(A\cap (-p,p)^{d}) | \leq E_{n,p} $,
where the error term $E_{n,p}$ is computed in Lemma~\ref{lem:weighted_perimeter_gnp_functional-bis}.
We  need the following lemma, also proved afterwards.
 
\begin{lemma}
\label{lem:extrema_local_covariogram}
Any functional $g\in G_{n}$ reaches its infimum on an element of $\M_{n}(\dom(g))$.
\end{lemma}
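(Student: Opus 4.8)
The plan is to reduce the minimisation of $g$ over the infinite-dimensional space $\M$ to a finite combinatorial minimisation, by slicing every pixel along the common reference cube $[0,n^{-1}]^d$. Write $g = c + \sum_{i=1}^q a_i \delta_{y_i;W_i}$ with $y_i \in n^{-1}\Z^d$ and $W_i \in \W_n$, and for $A\in\M$ and $t\in[0,n^{-1}]^d$ set $v_k(t) = \Ind_A(\tfrac1n k + t)\in\{0,1\}$, the value of $\Ind_A$ at the point of $C_k^n$ with local coordinate $t$. Because $y_i$ is a lattice vector, evaluating $\Ind_A$ at $x-y_i$ sends $\tfrac1n k + t$ to $\tfrac1n(k-ny_i)+t$, which sits at the \emph{same} local coordinate $t$ in the pixel $C^n_{k-ny_i}$. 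Splitting the defining integral $\delta_{y_i;W_i}(A)=\int_{W_i}\Ind_A(x)\Ind_A(x-y_i)\,dx$ pixel by pixel (using $W_i=\bigcup_{k\in I_{W_i}}C_k^n$) and integrating in the local coordinate therefore gives
\begin{equation*}
g(A) = c + \int_{[0,n^{-1}]^d} Q\big(v(t)\big)\, dt, \qquad Q(v) = \sum_{i=1}^q a_i \sum_{k \in I_{W_i}} v_k\, v_{k - n y_i},
\end{equation*}
where $Q$ is a \emph{fixed} polynomial depending only on the finitely many coordinates indexed by $K = \bigcup_{i} \big(I_{W_i} \cup (I_{W_i} - n y_i)\big)\subset\Z^d$.

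The minimisation then reduces to a finite problem. The cube $\{0,1\}^K$ is finite, so $Q$ attains a minimum $m^\ast = Q(v^\ast)$; extending $v^\ast$ by $0$ outside $K$, let $A^\ast = \bigcup_{k:\,v^\ast_k=1} C_k^n\in\M_n$, a finite union of pixels. For an arbitrary $A\in\M$ the slice $v(t)$ lies in $\{0,1\}^K$ for a.e.\ $t$, so $Q(v(t))\geq m^\ast$ pointwise and hence $g(A)\geq c + n^{-d} m^\ast$; on the other hand the constant slice $v(t)\equiv v^\ast$ produced by $A^\ast$ turns this into an equality. Thus $g(A^\ast) = c + n^{-d} m^\ast = \inf_{A\in\M} g(A)$, so the infimum is finite and is attained at the pixelised set $A^\ast$.

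It remains to place $A^\ast$ inside $\M_n(\dom(g))$, and this is the step I expect to be the main obstacle. First I would discard from $v^\ast$ every coordinate that does not genuinely occur in $Q$ (setting it to $0$ leaves $Q$ unchanged), so that each pixel retained in $A^\ast$ actually influences $g$. The hard part is then the \emph{all-or-nothing} dependence: I claim every such pixel satisfies $C_k^n\subset\dom(g)$ up to a Lebesgue-negligible set, which is enough since $\M$ identifies sets modulo null sets. Indeed, if $\L^d(C_k^n\setminus\dom(g))>0$, then a positive-measure set of local coordinates $t$ has $\tfrac1n k + t\notin\dom(g)$; choosing a configuration of the remaining coordinates for which toggling $v_k$ changes $Q$ and realising it on those slices yields two sets differing only inside $\dom(g)^c$ yet with distinct values of $g$, contradicting $g(A)=g(A\cap\dom(g))$. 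This uses precisely the uniform-in-$t$ structure of the integrand $Q(v(t))$: a pixel is either irrelevant to $g$ or lies, up to a null set, inside $\dom(g)$. Consequently every retained pixel lies in $\dom(g)$, so $A^\ast\in\M_n(\dom(g))$ and the infimum is reached there.
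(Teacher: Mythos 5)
Your proof is correct, and while the core decomposition is the same as the paper's, the way you finish is genuinely more elementary. The slicing $g(A)=c+\int_{[0,n^{-1}]^d}Q(v(t))\,dt$ is exactly the paper's reduction: there one sets $\tilde A^n_k=-k+nA^n_k\subset[0,1]^d$, views $Y^A=(\Ind_{\tilde A^n_k})_k$ as a $\{0,1\}^{I_n(W)}$-valued random vector on $([0,1)^d,\leb)$, and writes $g(A)=\langle\beta,C(A)\rangle$ with $C(A)$ its second-moment matrix — which is your $\int Q(v(t))\,dt$ up to the rescaling by $n^d$. The difference is in how the finite problem is solved: the paper minimises the linear functional $\langle\beta,\cdot\rangle$ over the correlation polytope $\Gamma_n$ and invokes the characterisation of its extreme points as the deterministic rank-one $0/1$ matrices (citing \cite{Lac13b} and \cite{DezLau97}), whereas you use the pointwise bound $Q(v(t))\geq\min_{\{0,1\}^K}Q$ under the integral, which is self-contained and replaces the external extreme-point result by the triviality $\E\,Q(Y)\geq\min Q$. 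What the paper's route buys is the explicit link to the correlation polytope, thematically tied to the admissibility discussion; what yours buys is brevity and independence from \cite{Lac13b}. You also handle the containment $A^\ast\in\M_n(\dom(g))$ differently: the paper restricts to $W=\dom(g)$ from the outset, using $g(A)=g(A\cap W)$ and the (asserted) fact that $\dom(g)\in\W_n$, so that only indices $k\in I_n(W)$ ever appear; your a posteriori argument — a coordinate that genuinely affects $Q$ forces its pixel into $\dom(g)$ up to a null set, on pain of contradicting $g(A)=g(A\cap\dom(g))$ on a positive-measure set of slices — is sound (containment in $\M_n(\dom(g))$ being understood modulo null sets, as everything in $\M$ is), and in fact avoids relying on the pixelised structure of $\dom(g)$, at the cost of being more laborious than the paper's up-front restriction.
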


We have  $\dom(g_{n,p}-g)\subset U_n^p $, whence  by Lemma~\ref{lem:extrema_local_covariogram}, $g_{n,p}-g$  reaches its  infimum over $\M$ on $\M_{n}^{p+1/n}$, and 
\begin{equation*}
 \inf_{A\in \M_{n}^{p+1/n}  } g_{n,p}(A)-g(A)  = \inf_{A\in \M   } g_{n,p}(A)-g(A) =\inf_{A\in \M}(g_{n,p}-g)(A)\leq \Phi (g_{n,p}-g),
\end{equation*} where the last inequality is a consequence of the positivity of $\Phi $. Therefore,
\begin{equation}
\label{eq:1}
m_{g} \leq \Phi (g_{n,p}-g)+\Phi (g)+E_{n,p}=\Phi (g_{n,p})+E_{n,p}.
\end{equation}
Let us bound $\Phi(g_{n,p})$.
Recall that by definition $\Phi(\delta_{y;W}) = \gamma(y;W)$.
The definition of the constants $L_j(\gamma, W)$ and the expression of $g_{n,p}$ yield
$$
\Phi(g_{n,p}) \leq \sum_{m=1}^p \beta_m \left(\sum_{j=1}^d L_j(\gamma, U_m)\right) + \left(\sum_{m = p+1}^{+\infty} \beta_{m}\right) \left(\sum_{j=1}^d L_j(\gamma,  U_n^p )\right).
$$ 
\begin{lemma}
\label{lem:Lj_constants_are_increasing}
For all admissible functions $\gamma$, $L_j(\gamma;W) \leq L_j(\gamma,W')$ for any $j\in\{1,\dots,d\}$ and $W, W'\in\W$ such that $
W\subset W'.$
\end{lemma}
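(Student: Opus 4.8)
The plan is to prove the monotonicity of $L_j(\gamma,W)$ in $W$ by reducing it to a pointwise monotonicity of the approximating quantity $\sigma_\gamma(\eps e_j; W)$, exploiting the admissibility of $\gamma$. The key observation is that $\sigma_\gamma(\eps e_j;W)$ is, by definition, a fixed linear combination of covariogram evaluations, so admissibility lets me transfer inequalities that hold for \emph{every} deterministic set $A$ into inequalities for $\gamma$. Specifically, for a fixed nonzero $\eps$ and fixed direction $e_j$, I first want to establish that for all $A\in\M$,
$$
\sigma_{\eps e_j;W}(A) \leq \sigma_{\eps e_j;W'}(A) \quad\text{whenever } W\subset W'.
$$
By Proposition~\ref{prop:per-approx} this quantity equals $|\eps|^{-1}\L^d\left(\left((-\eps e_j + A)\Delta A\right)\cap(W\ominus[0,\eps e_j])\right)$ up to the symmetric decomposition, i.e.\ it is $|\eps|^{-1}$ times the measure of a fixed set intersected with a window that grows with $W$. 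Since $W\subset W'$ forces the relevant eroded windows to be nested as well (erosion by a fixed segment preserves inclusion), the integrand is a measure of a set intersected with an increasing domain, so the value increases. This is the deterministic, set-level inequality.

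Once the pointwise inequality $\sigma_{\eps e_j;W}(A)\leq\sigma_{\eps e_j;W'}(A)$ holds for all $A\in\M$, I rewrite it as
$$
\forall A\in\M,\quad \sigma_{\eps e_j;W'}(A) - \sigma_{\eps e_j;W}(A) \geq 0,
$$
and recognize the left-hand side as $c + \sum_i a_i\delta_{y_i;W_i}(A)$ with $c=0$ for appropriate coefficients $a_i$, vectors $y_i\in\{0,\pm\eps e_j\}$, and windows $W_i$ among the four eroded windows attached to $W$ and $W'$. Applying Definition~\ref{def:covariogram_admissible_measurable} with this $5$-tuple yields $\sigma_\gamma(\eps e_j;W')-\sigma_\gamma(\eps e_j;W)\geq 0$, that is, $\sigma_\gamma(\eps e_j;W)\leq\sigma_\gamma(\eps e_j;W')$ for every fixed $\eps\neq 0$. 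Taking the supremum over $\eps\in\R$ of both sides and using~\eqref{eq:definition_Lj_constants_as_sup} gives $L_j(\gamma,W)\leq L_j(\gamma,W')$, which is the claim. (The case $\eps=0$ gives $\sigma_\gamma(0;\cdot)=0$ trivially and does not affect the supremum.)

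The main obstacle I anticipate is purely bookkeeping: verifying that the four windows $W\ominus[-\eps e_j,0]$, $W\ominus[0,\eps e_j]$, $W'\ominus[-\eps e_j,0]$, $W'\ominus[0,\eps e_j]$ appearing in $\sigma_\gamma(\eps e_j;W)$ and $\sigma_\gamma(\eps e_j;W')$ all lie in $\W$ (i.e.\ are bounded open sets with negligible boundary) so that the admissibility hypothesis is genuinely applicable, and that the coefficients assemble into a legitimate admissible test combination. One must also confirm that Minkowski erosion by a segment respects the inclusion $W\subset W'$; this is elementary since $x\in W\ominus S$ means $x+S\subset W\subset W'$, hence $x\in W'\ominus S$. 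Modulo these verifications, the argument is a clean two-step reduction: a deterministic monotonicity for $\sigma_{\eps e_j;W}(A)$, followed by a single invocation of admissibility.
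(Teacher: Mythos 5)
Your proposal is correct and follows essentially the same route as the paper: establish the deterministic pointwise inequality $\sigma_{\eps e_j;W}(A)\leq\sigma_{\eps e_j;W'}(A)$ for all $A\in\M$ (the paper does this via the identity $\delta_{0;W}(A)-\delta_{y;W}(A)=\L^d((A\setminus(y+A))\cap W)$ and the nesting of the eroded windows, which is the same computation as your symmetric-difference formulation), then invoke admissibility with $c=0$ to transfer the inequality to $\sigma_\gamma$, and finally take the supremum over $\eps$. The bookkeeping points you flag (erosion preserves inclusion, the eroded windows belong to $\W$) are handled, or implicitly assumed, in exactly the same way in the paper.
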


By Lemma~\ref{lem:Lj_constants_are_increasing}, since $\gamma$ is admissible, and for all $m>p$, $ U_n^p  \subset U_m$,
$$
\left(\sum_{m = p+1}^{+\infty} \beta_{m}\right) \left(\sum_{j=1}^d L_j(\gamma, U_n^p )\right)
= \sum_{m = p+1}^{+\infty} \beta_{m}\left(\sum_{j=1}^d L_j(\gamma, U_n^p )\right) 
\leq \sum_{m = p+1}^{+\infty} \beta_{m}\left(\sum_{j=1}^d L_j(\gamma, U_m)\right).
$$
Hence,
$$
\Phi(g_{n,p}) \leq \sum_{m=1}^{+\infty} \beta_m \left(\sum_{j=1}^d L_j(\gamma, U_m)\right) \leq r.
$$
Coming back to \eqref{eq:1} yields
\begin{equation*}
m_{g}\leq r + E_{n,p}.
\end{equation*}
Since for all $n\geq 1$, $E_{n,p}$ tends to $0$ as $p$ tends to $+\infty$, one has
$m_{g}\leq r$.
Since $n\geq 1$ and $g\in G_n$ were arbitrarily chosen, we conclude that
$$
\sup_{g\in G^\ast} \inf_{A\in\M} \PerB^{\beta}(A) - g(A) + \Phi(g) \leq r.
$$
Hence, we can apply Theorem~\ref{thm_lacmol_realisability} that ensures that there exists a RAMS $X$ solution of the problem~\eqref{eq:real-pb-general}.
This RAMS $X$ satisfies $\E\PerB^{\beta }(X)\leq r$ and, for $y$ in the set $\Q^{d}$ of vectors with rational coordinates,
\begin{equation*}
W \in \W\cap \bigcup_{n\in\N^*} \M_n,\quad  \gamma_{X}(y;W) = \gamma(y;W).
\end{equation*}
It only remains to show that this equality between $\gamma_X$ and $\gamma$ extends to all couple $(y;W)\in \R^d\times \W$
using the continuity of both $\gamma_X$ and $\gamma$.

First, regarding the $W$-variable, since $\gamma_X$ and $\gamma$ are both admissible, and by Proposition~\ref{prop:continuity_local_covariogram}
$|\delta_{y;U}(A) - \delta_{y;W}(A)| \leq \L^d(U\Delta W)$ for all $U, W \in\W$,
both $\gamma_X$ and $\gamma$ are continuous with respect to the convergence in measure.
Besides, the set of pixelized sets $\W\cap \bigcup_{n\in\N^*} \M_n$ is dense in $\W$ for the convergence in measure.
Indeed, given $W\in\W$, one easily shows by dominated convergence that the sequence $W_n = \bigcup_{k\in\Z^d}\left\{ C^n_k:~ C^n_k\subset W \right\}$ converges in measure towards $W$, since due to the hypothesis $\leb(\partial W)=0$, for almost all $x\in\R^d$ either
$x\in W$ or $x\in\R^d\setminus \overline{W}$, where $\overline{W}$ denotes the closure of $W$.

Regarding the $y$-variable, $y\mapsto \gamma_X(y;W) = \E \delta_{y;W}(X)$ is continuous since $y\mapsto \delta_{y;W}(X)$ is a.s. continuous and bounded by $\L^d(W)$.
To conclude the proof, let us show that $y\mapsto \gamma(y;W)$ is also continuous over $\R^d$, which is the purpose of the following lemma.

\begin{lemma}
\label{lem_admissible_Delta_are_locally_lipschitz}
Let $\gamma$ be an admissible function.
Let $y\in\R^d$, $W\in\W$, and $r>0$.
For all $z\in\R^d$ and $\rho>0$, denote by $C(z,\rho)$  the hypercube of center $z$ and half size length $\rho$, 
that is $C(z,\rho) = \{z'\in\R^d:~\|z'-z\|_{\infty }\leq \rho\}$.

Then, for all $z,z'\in C(y,r)$,
$$
|\gamma(z;W) - \gamma(z';W)| \leq \sum_{j=1}^d L_j(\gamma, W \oplus C(-y,3r)) |z_j' - z_j|.
$$
In particular, if $\gamma$ satisfies~\eqref{eq:avg-perimeter}, then for all $W\in\W$, the map $y\mapsto \gamma(y;W)$ is locally Lipschitz.
\end{lemma}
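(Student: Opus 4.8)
The plan is to establish the Lipschitz-type bound for admissible functions $\gamma$ by exhibiting, for each pair $z,z'$ in the cube $C(y,r)$, an explicit dominating inequality at the level of the local covariogram functionals $\delta_{\cdot;W}$ that holds pointwise on all $A\in\M$, and then invoking admissibility to transfer that inequality to $\gamma$ itself. The guiding principle is Definition~\ref{def:covariogram_admissible_measurable}: any affine combination of covariogram evaluations that is nonnegative (or bounded) uniformly over $\M$ forces the same numerical bound on the corresponding combination of $\gamma$-values. So the real work is combinatorial and geometric, taking place entirely among deterministic sets.

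\textbf{First} I would reduce to a one-coordinate move. Writing $z$ and $z'$ as differing in the coordinates one at a time and telescoping, it suffices to bound $|\gamma(z;W)-\gamma(z';W)|$ when $z'=z+te_j$ for a single $j$ and $|t|\le 2r$; the full bound then follows by summing the $d$ contributions, which is exactly the shape of the claimed right-hand side $\sum_j L_j(\gamma, W\oplus C(-y,3r))|z_j'-z_j|$. \textbf{Then} for such a single-direction increment I would seek a pointwise estimate on $|\delta_{z;W}(A)-\delta_{z+te_j;W}(A)|$ in terms of the directional-variation approximant $\sigma_{te_j;W'}(A)$ for a suitably enlarged window $W'$. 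The natural tool is the Lemma leading to~\eqref{eq:difference_local_covariogram_less_than_in_zero_translated}, together with the identity~\eqref{eq:difference_zero_local_covariogram_is_leb_measure} expressing covariogram differences as Lebesgue measures of set differences; these let me control a shift in the spatial variable by a quantity of the form $\L^d((A\setminus(te_j+A))\cap W')$, which is precisely the building block of $\sigma$. The enlargement from $W$ to $W\oplus C(-y,3r)$ is dictated by the locality relation~\eqref{eq:locality_local_covariogram}: the windows $W\ominus[\pm te_j,0]$ and the relevant translates must all be contained in the enlarged window uniformly over $z,z'\in C(y,r)$ and $|t|\le 2r$, and a factor-$3$ cube comfortably absorbs the shift $-y$ plus the radius-$r$ variation of $z,z'$ plus the increment $t$.

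\textbf{The key step} is then to package the pointwise bound as a genuinely admissible inequality: I would construct an affine combination $c+\sum_i a_i\delta_{y_i;W_i}$, nonnegative for every $A\in\M$, whose $\gamma$-image is exactly $\sigma_\gamma(te_j; W\oplus C(-y,3r)) - (\text{covariogram difference})/|t|$ or similar, so that admissibility yields $|\gamma(z;W)-\gamma(z';W)| \le |t|\,\sigma_\gamma(te_j;W\oplus C(-y,3r)) \le |t|\,L_j(\gamma, W\oplus C(-y,3r))$, the last inequality being the definition~\eqref{eq:definition_Lj_constants_as_sup} of $L_j$ as a supremum over $\eps$. \textbf{Finally}, summing over $j$ gives the stated bound, and the local-Lipschitz conclusion is immediate: if $\gamma$ satisfies~\eqref{eq:avg-perimeter}, then each $L_j(\gamma,U_n)$ is finite for the $n$ with $U_n \supset W\oplus C(-y,3r)$, so by Lemma~\ref{lem:Lj_constants_are_increasing} (monotonicity of $L_j$ in the window) the constant $L_j(\gamma, W\oplus C(-y,3r))$ is finite, giving a Lipschitz constant valid on the whole cube $C(y,r)$.

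\textbf{The main obstacle} I anticipate is bookkeeping the window enlargements so that a \emph{single} window $W\oplus C(-y,3r)$ dominates all the shifted windows appearing across the telescoping sum and across the full range $z,z'\in C(y,r)$, while simultaneously checking that each elementary covariogram combination really is nonnegative on all of $\M$ and not merely on sets of finite perimeter. The sign and locality juggling in~\eqref{eq:difference_local_covariogram_less_than_in_zero_translated} is delicate, since the inequality there is one-sided, so I expect to apply it in both orderings of $z,z'$ to recover the absolute value; ensuring the enlarged cube $C(-y,3r)$ is large enough in every coordinate to make this symmetrization legitimate is the crux of the argument.
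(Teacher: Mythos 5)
Your proposal follows essentially the same route as the paper's proof: telescoping over coordinates, controlling a single-direction increment via the one-sided inequality~\eqref{eq:difference_local_covariogram_less_than_in_zero_translated} applied in both orderings, enlarging the window so that $-z+W\oplus[-te_j,0]\subset W\oplus C(-y,3r)$, transferring the pointwise bounds to $\gamma$ by admissibility, and concluding via the definition of $L_j$ as a supremum together with its monotonicity in the window (Lemma~\ref{lem:Lj_constants_are_increasing}). The argument is correct and matches the paper's in all essentials.
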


\end{proof}
 
We  turn to the proofs of the lemmas.

\begin{proof}[Proof of Lemma~\ref{lem:weighted_perimeter_gnp_functional-bis}]

First remark that for all sets $A$ of finite perimeter such that $A\subset (-p,p)^d$ for some integer $p\geq 1$ and for all $m>p$, since $A\cap (U_m \setminus (-p,p)^d) = \emptyset$,
$$
\PerB(A;U_m)  = \PerB(A;(-p-\tfrac{1}{n},p+\tfrac{1}{n})^d) = \PerB(A; U_n^p).
$$
Consequently,
$$
\PerB^{\beta}(A)
= \sum_{m\geq 1}\beta_{m} \PerB(A;U_m)
= \sum_{m = 1}^{p} \beta_{m} \PerB(A;U_m) + \left( \sum_{m = p+1}^{+\infty} \beta_{m}  \right) \PerB(A;U_n^p).
$$
According to Proposition~\ref{prop:per_approx_local_covariogram}, for all pixelized sets $A\in\M_n^p$,
all the perimeters $\PerB(A;U_m)$, $1\leq m\leq p$, and $\PerB(A;U_n^p)$, can be expressed as some linear combination of local covariograms, which gives
$$
\begin{aligned}
  \PerB^{\beta}(A) =  
\sum_{m = 1}^{p} \beta_{m} 
\sum_{j=1}^d \sigma_{n^{-1}e_{j};U_{m}}(A)  + \left(\sum_{m = p+1}^{+\infty} \beta_{m}\right) 
\sum_{j=1}^d \sigma_{n^{-1}e_{j};U_n^{p}}(A) .\end{aligned}
$$
The linear combination on the right-hand side is an element of $G$ that will be denoted $g_{n,p}$ in what follows.
Note that $\dom(g_{n,p}) \subset  U_n^{p} = (-p-\frac{1}{n},p+\frac{1}{n})^d$.
It remains to show the inequality $| g_{n,p}(A)-g_{n,p}(A\cap (-p,p)^{d}) |\leq E_{n,p}$.
Using Proposition~\ref{prop:continuity_local_covariogram},
for all $A, B\in\M$, $j\in\{1,\dots,d\}$, and $W\in\{ U_1,\dots, U_p, U_n^{p}\}$,
$$
|\sigma_{n^{-1}e_{j};W}(A) - \sigma_{n^{-1}e_{j};W}(B)|
\leq 8 n \L^d((A\Delta B) \cap W).
$$
Hence, for all $A\in\M_n$,
$$
\begin{aligned}
|g_{n,p}(A) - g_{n,p}(A \cap (-p,p)^d)|
& = |g_{n,p}(A \cap U_n^{p}) - g_{n,p}(A \cap (-p,p)^d)|
\\
& \leq \left(\sum_{m = p+1}^{+\infty} \beta_{m}\right) d 8 n \L^d((A \cap U_n^{p}) \Delta (A\cap (-p,p)^d)),
\end{aligned}
$$
since for all $m\in\{1,\dots,p\}$, $((A \cap U_n^{p}) \Delta (A\cap (-p,p)^d))\cap U_m = \emptyset$.
For all $m\geq p+1$,  $\beta_m = 2^{-m} (2m)^{-d} \leq 2^{-m} (2(p+1))^{-d}$.
Hence, 
$$
  \sum_{m = p+1}^{+\infty} \beta_{m} \leq 2^{-d} (p+1)^{-d} \sum_{m = p+1}^{+\infty} 2^{-m} = 2^{-d-p}(p+1)^{-d}.
$$
Besides, $\L^d((A \cap U_n^{p}) \Delta (A\cap (-p,p)^d)) \leq \L^d(U_n^{p}\setminus (-p,p)^d) = 2^d \left( \left(p+\tfrac{1}{n}\right)^d - p^d  \right)$.
Finally
$$
\begin{aligned}
|g_{n,p}(A \cap U_n^{p}) - g_{n,p}(A \cap (-p,p)^d)|
& \leq 8 d n 2^{-p}(p+1)^{-d} \left( \left(p+\tfrac{1}{n}\right)^d - p^d  \right).
\end{aligned}
$$

\end{proof}

\begin{proof}[Proof of Lemma~\ref{lem:extrema_local_covariogram}] 

Put $W=\dom(g)\in \W_{n}$. Then $g(A)=g(A\cap W)$ for any $A\in \M$. 
Now that the problem is restricted to the bounded pixelized domain $W$, it remains to show that the extrema of $g$ on $\M(W)$ are reached by sets of $\M_n(W)$.
Let us turn into the details.

Without loss of generality, assume that $g$ has the form
$$
\begin{aligned}
g=\sum_{i=1}^{q}a_{i}\delta _{y_{i};W_{i}},
\end{aligned}
$$
for some $y_{i}\in n^{-1}\mathbb{Z} ^{d},W_{i}\in \W_{n},a_{i}\in \mathbb{R}$.
Denote by $I_n(W)$ the set of all indexes $k\in\Z^d$ such that the hypercube $C^n_k$ is included in $\overline{W}$, we then also have $\overline{W}=\cup _{k\in I_{n}(W)}C_{k}^{n}$.
For $A\in\sM(W)$, $n \geq 1$,  denote by $A^n_{k}=A\cap C_k^n,  k\in I_n(W)$,
the intersection of $A$ with the hypercube $C^n_{k}$, and by 
$\tilde A_{k}^{n}=-k+nA^{n}_{k}$
its rescaled translated version comprised in $[0,1]^d$.
Consider the probability space $(\Omega=[0,1)^d, \mathcal{A} = \B([0,1)^d),\mathbb{P}=\leb)$, on which we define the $\{0,1\}^{I_n(W)}$-valued random vector
$
\begin{array}{rcl}
Y^{A}(w)= (\Ind_{\tilde{A}^n_k}\left( \omega\right))_{k\in I_n(W)},\omega \in \Omega .
\end{array}
$
The measures of the pairwise  intersections $\L^d\left(\tilde{A}^n_k\cap \tilde{A}^n_{l}\right)$ can thus be seen as  the components of the covariance matrix $C(A)= \left(C _{k,l}(A)\right)_{k,l\in I_n(W)}$ of the random vector $Y^{A}$, i.e.
$$
C_{k,l}(A) = \mathbb{E}\left( Y^{A}_{ k} Y^{A}_{ l} \right) = \mathbb{E}\left( \Ind\left(\omega \in \tilde A^n_k \right) \Ind\left( \omega \in \tilde  A^n_l \right) \right) = \L^d\left(\tilde{A}^n_k\cap \tilde{A}^n_{l}\right),
~k,l\in I_n(W).
$$
Let us prove that $g(A)$ can be written as
$$
g(A) = \sum_{k,l\in I_n(W)} \beta_{k,l} \leb(\tilde{A}_{k}^{n}\cap \tilde A_{l}^{n})
$$
for some coefficients $\beta =(\beta _{k,l})_{k,l\in I_n(W)}$ depending solely on $g$.
Putting  $k_{i} = n y_i\in \mathbb{Z} ^{d}$, we have
$$
\begin{aligned}
g(A)
& = \sum_{i=1}^{q}a_{i} \delta_{n^{-1}k_{i}, W_i} (A) \\
& = \sum_{i=1}^{q}a_{i}\L^d(A\cap (n^{-1}k_{i}+A)\cap  {W_i})\\
& = \sum_{i=1}^{q}a_{i}\sum_{k\in I_n(W)}\Ind(C_{k}^{n}\subset \overline{W_i})\int_{C_{k}^{n}}\Ind (x\in A,x\in n^{-1}k_{i}+A )dx\\
& = \sum_{i=1}^{q}a_{i}\sum_{k,l\in I_n(W)}\Ind (l=k-k_{i} )\Ind(C_{k}^{n}\subset \overline{W_i})\int_{C_{k}^{n}}\Ind ( x\in A,x\in n^{-1} (k-l)+A)dx\\
& = \sum_{k,l\in I_n(W)}\sum_{i=1}^{q}a_{i}\Ind (l=k-k_{i} )\Ind(C_{k}^{n}\subset \overline{W_i})\int_{C_{0}^{n}}\Ind (x\in -n^{-1}k+A,x\in -n^{-1}l+A)dx\\
& = \sum_{k,l\in I_n(W)} \underbrace{n^{-d} \left(\sum_{i=1}^{q}a_{i}\Ind (l=k-k_{i} )\Ind(C_{k}^{n}\subset \overline{W_i})\right)}_{= \beta_{k,l}} \L^d(\tilde A_{k}^{n}\cap \tilde A_{j}^{n}).
\end{aligned}
$$
Then, one can write
$$
g (A) = \sum_{k, l\in I_n(W)} \beta_{k,l} C_{k,l}(A) = \langle \beta, C (A) \rangle,
$$
where $\langle\cdot,\cdot\rangle$ stands for the classical scalar product between matrices.
Denote by $\Gamma_{n}$ the set of covariance matrices of all random vectors having values in $\{0,1\}^{I_n(W)}$.
Since for every set $A$ one can associate some covariance matrix $C (A)$ such that $g (A) = \langle \beta,  C (A)\rangle$,
one can write
$$
\inf_{A\in \M(W)} g(A)\geq \inf_{C\in\Gamma_{n}} \langle \beta, C  \rangle.
$$
The optimization problem on the right-hand side of this inequality is a linear programming problem on the bounded convex set $\Gamma_{n}$.
Hence we are ensured that there exists an optimal solution $C^\ast$ of this problem which is an extreme point of $\Gamma_n$.
As shown in \cite[Th. 2.5]{Lac13b}, the extreme points of $\Gamma_n$  are covariance matrices associated with deterministic random vectors, see also \cite{DezLau97}, where $\Gamma _{n}$ is called the \emph{correlation polytope} and studied more deeply. That is there exists a fixed vector $z^\ast\in\{0,1\}^{I_n(W)}$ such that $C^\ast_{k,l} = z^\ast_k z^\ast_l$ minimizes $\langle \beta, C  \rangle$.
Given this vector $z^\ast\in\{0,1\}^{I_n(W)}$, define the set $A^\ast$ as the union of the hypercubes 
\begin{equation*}
A^\ast =\bigcup_{\{k:~z_{k}^\ast =1\}} C_{k}^n\cap W.
\end{equation*}
Then one sees that the covariance matrix $C(A^\ast )$ associated with the deterministic set $A^\ast$ is equal to $C^\ast$.
Furthermore it is clear that $A^\ast$ is measurable with respect to the $\sigma$-algebra generated by the $C_{k}^n$, meaning exactly $A^\ast\in \M_{n}(W)$.
Hence we have shown that,
$$
\inf_{A\in \M(K)} g(A) \geq \inf_{C\in\Gamma_n} \langle \beta, C  \rangle = \min_{A\in \M_n(W)} g (A).
$$
Since $\M_n(W)\subset \M(W)$ the reverse inequality is immediate, and thus
$$
\inf_{A\in \M(W)} g (A) = \min_{A\in \M_n(W)} g(A).
$$ 
\end{proof}

\begin{proof}[Proof of Lemma~\ref{lem:Lj_constants_are_increasing}]

First, remark that if $W$ and $W'$ are two observation windows such that $W\subset W'$,
then, for all $y\in\R^d$ and $A\in\M$,
$$
0\leq \delta_{0;W}(A) - \delta_{y;W}(A) \leq \delta_{0;W'}(A) - \delta_{y;W'}(A).
$$
Indeed,   $W\subset W'$ yields $(A\setminus(y+A))\cap W\subset (A\setminus(y+A))\cap W'$,
and thus, taking the Lebesgue measure and using (\ref{eq:difference_zero_local_covariogram_is_leb_measure}) gives
$0\leq \delta_{0;W}(A) - \delta_{y;W}(A) \leq \delta_{0;W'}(A) - \delta_{y;W'}(A)$.

Let $W$ and $W'\in\W$ such that $W\subset W'$, $j\in\{1,\dots,d\}$, and let us show that $L_j(\gamma;W)\leq L_j(\gamma,W')$.
Suppose that $L_j(\gamma,W')$ is finite, otherwise there is nothing to show.
Since $W\subset W'$ one also has
$W\ominus[-\eps e_j,0]\subset W'\ominus[-\eps e_j,0]$
and
$W\ominus[0,\eps e_j]\subset W'\ominus[0,\eps e_j]$.
Hence, according to the preliminary remark,
for all $A\in\M$, $\eps\in\R$,
 $
\sigma _{\eps e_{j};W}(A) 
\leq \sigma _{\eps e_{j};W'}(A). 
 $
Since $\gamma$ is admissible, this implies that for all $\eps\in\R$,
$ 
\sigma_{\gamma } ( {\eps e_{j};W}) 
\leq 
\sigma_{\gamma } ({\eps e_{j};W'}) .
 $
Hence, by definition of $L_j(\gamma,W')$, 
$
\sigma_{\gamma } ( {\eps e_{j};W}) 
\leq L_j(\gamma,W'),
$
and thus $L_j(\gamma;W) \leq L_j(\gamma,W')$.
\end{proof}

\begin{proof}[Proof of Lemma~\ref{lem_admissible_Delta_are_locally_lipschitz}]
Recall that it has been shown in the proof of Proposition~\ref{prop:specific_covariogram_directional_lipschitz_constant} (see~\eqref{eq:difference_local_covariogram_less_than_in_zero_translated})
that for all $y, z\in\R^d$, $W\in\W$, and $A\in\M$,
$$
\delta_{y;W}(A) - \delta_{z,W}(A)
\leq \delta_{0,-y+W}(A) - \delta_{z-y,-y+W}(A).
$$
Let $\gamma$ be an admissible function and let $y\in\R^d$, $W\in\W$ and $r>0$ be fixed.
Let $z, z'\in C(y,r)$ be such that $z' = z + t e_j$ for some $t\in\R$ and $j\in\{1,\dots,d\}$.
Since $\gamma$ is admissible, the above inequality ensures that
$$
\gamma(z;W) - \gamma(z';W)
\leq \gamma(0;-z+W) - \gamma(t e_j; -z+W).
$$
As a consequence of (\ref{eq:difference_zero_local_covariogram_is_leb_measure}),
since $\gamma$ is admissible, the difference at zero $U\mapsto \gamma(0;U) - \gamma(t e_j; U)$ is an increasing function of $U$.
Hence, since $W \subset (W\oplus [-t e_j, 0]) \ominus[-t e_j,0]$, 
$$
\begin{aligned}
& \gamma(z;W) - \gamma(z';W)
\\
&\leq \gamma(0;-z+(W\oplus [-t e_j, 0]) \ominus[-t e_j,0]) - \gamma(t e_j; -z+(W\oplus [-t e_j, 0]) \ominus[-t e_j,0])
\\
& \leq |t| \frac{\gamma(0;-z+(W\oplus [-t e_j, 0]) \ominus[-t e_j,0]) - \gamma(t e_j; -z+(W\oplus [-t e_j, 0]) \ominus[-t e_j,0])}{|t|}
\\
& \leq |t| \sup_{\eps\in\R} \frac{\gamma(0;-z+(W\oplus [-t e_j, 0]) \ominus[-\eps e_j,0]) - \gamma(\eps e_j; -z+(W\oplus [-t e_j, 0]) \ominus[-\eps e_j,0])}{|\eps|}
\\
& \leq |t| L_j(\gamma, -z+W\oplus [-t e_j, 0]).
\end{aligned}
$$
According to Lemma~\ref{lem:Lj_constants_are_increasing}, $W\mapsto L_j(\gamma;W)$ is increasing.
Since $z\in C(y,r)$ and $|t| = \| z-z'\|_\infty \leq \|z-y\|_\infty+\|y-z'\|_\infty \leq 2r$,
we have $-z+W\oplus [-t e_j, 0] \subset W\oplus C(-y,3r)$.
Hence, for all $z, z'\in C(y,r)$ be such that $z' = z + t e_j$,
$$
\gamma(z;W) - \gamma(z';W) \leq  L_j(\gamma, W\oplus C(-y,3r)) |t|.
$$
Exchanging $z$ and $z'$ one gets, $|\gamma(z;W) - \gamma(z';W)| \leq L_j(\gamma, W\oplus C(-y,3r)) |t|$.
To finish,   consider a couple of points $z,z'\in C(y,r)$ that are not necessarily aligned along an axis.
Consider the finite sequence of vector $u_0=z$, $u_1$,$\dots$,$u_d=z'$ defined such that the $j$ first coordinates of $u_j$ are the ones of $z'$ while its $d-j$ last coordinates are the ones of $z$, so that $u_0 = z$, $u_d = z'$ and
$u_{j} - u_{j-1} = (z_j' - z_j) e_j$.
Clearly, each $u_j$ belongs to the hypercube $C(y,r)$, and thus applying the $d$ inequalities obtained above,
$$
\begin{aligned}
| \gamma(z;W) - \gamma(z';W) |
& = \left| \sum_{j=1}^d \gamma(u_j;W) - \gamma(u_{j-1};W) \right|
\\
& \leq \sum_{j=1}^d \left| \gamma(u_j;W) - \gamma(u_{j-1};W) \right|
\\
& \leq \sum_{j=1}^d L_j(\gamma, W\oplus C(-y,3r)) |z_j' - z_j|.
\end{aligned}
$$
If $\gamma$ satisfies~\eqref{eq:avg-perimeter} then, for all $n\in\N^*$ and $j\in\{1,\dots,d\}$, the constants $L_j(\gamma, U_n)$ are all finite.
According to Lemma~\ref{lem:Lj_constants_are_increasing}, this implies that the $d$ constants $L_j(\gamma, W\oplus C(-y,3r))$, $j\in\{1,\dots,d\}$, are all finite for any fixed $y\in\R^d$, $W\in\W$, and $r>0$, and thus the map $y\mapsto \gamma(y;W)$ is locally Lipschitz.
\end{proof}

\subsection{Stationary case}
\label{subsubsec:proof_realisability_Rd_stationary}

The following theorem is the main result of this paper.
It is a refined version of Theorem~\ref{thm:intro-result} given in the introduction.

\begin{theorem}
\label{thm:stationary-realisability_over_Rd_perB}
Let $S_2:\mathbb{R}^{d}\rightarrow \R$ be a  function and $r\geq 0$. Then there is a stationary RAMS $X$ such that 
\begin{equation}
\label{eq:beta-real-problem}
\begin{cases}
S_2(y) = \gamma_{X}^{s}(y), & y\in\R^d,\\
\PerB^{s}(X)\leq r
\end{cases}
\end{equation}
if and only if $S_2$ is admissible and
$$
\sum_{j=1}^d \Lip_j( S_2,0) \leq \frac{r}{2}.
$$
\end{theorem}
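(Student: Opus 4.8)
\textsl{Necessity.} This direction is immediate from the results already established. Suppose $X$ is a stationary RAMS with $\gamma^s_X = S_2$ and $\PerB^s(X)\le r$. For any combination $c+\sum_i a_i\delta_{y_i;W_i}$ that is nonnegative on all of $\M$, taking expectations and using $\gamma_X(y_i;W_i)=S_2(y_i)\leb(W_i)$ gives $c+\sum_i a_i S_2(y_i)\leb(W_i)\ge 0$, so $S_2$ is admissible in the sense of Definition~\ref{def:covariogram_admissible_measurable}. Moreover Proposition~\ref{prop:specific_covariogram_directional_lipschitz_constant} yields $\Lip_j(S_2,0)=\tfrac12 V^s_{e_j}(X)$ for each $j$, whence $\sum_{j=1}^d\Lip_j(S_2,0)=\tfrac12\sum_{j=1}^d V^s_{e_j}(X)=\tfrac12\PerB^s(X)\le r/2$.

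\textsl{Sufficiency: reduction to the non-stationary theorem.} The plan is to first apply Theorem~\ref{thm:realisability_over_Rd_perBbeta} to the product candidate $\gamma(y;W)=S_2(y)\leb(W)$, which is admissible precisely because $S_2$ is. The key is to evaluate the constants $L_j(\gamma,U_n)$. Feeding the exact deterministic identity $\delta_{y;W}(A)=\delta_{-y;-y+W}(A)$ and the inequality~\eqref{eq:difference_local_covariogram_less_than_in_zero_translated} into the admissibility of $\gamma$ forces $S_2$ to be even and to satisfy $|S_2(y)-S_2(z)|\le S_2(0)-S_2(y-z)$; in particular $t\mapsto S_2(0)-S_2(te_j)$ is nonnegative and subadditive, so a Fekete-type argument shows $\limsup_{\eps\to 0}\tfrac{S_2(0)-S_2(\eps e_j)}{|\eps|}=\sup_\eps\tfrac{S_2(0)-S_2(\eps e_j)}{|\eps|}=\Lip_j(S_2,0)$. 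Substituting the erosion volumes $\leb(U_n\ominus[-\eps e_j,0])=\leb(U_n\ominus[0,\eps e_j])=(2n)^{d-1}(2n-|\eps|)$ into $\sigma_\gamma(\eps e_j;U_n)$ and taking the supremum then gives the clean identity $L_j(\gamma,U_n)=2\Lip_j(S_2,0)\leb(U_n)$. Since $\sum_n\beta_n\leb(U_n)=1$, condition~\eqref{eq:avg-perimeter} reads $\sum_n\beta_n\sum_j L_j(\gamma,U_n)=2\sum_j\Lip_j(S_2,0)\le r$, so Theorem~\ref{thm:realisability_over_Rd_perBbeta} furnishes a (not yet stationary) RAMS $X_0$ with $\gamma_{X_0}(y;W)=S_2(y)\leb(W)$ for all $(y,W)$. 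Crucially, the same identity combined with Corollary~\ref{cor:tcd-per} (which gives $\E V_{e_j}(X_0;U_n)=\sup_\eps\sigma_{\gamma_{X_0}}(\eps e_j;U_n)=L_j(\gamma,U_n)$) yields the linear-in-volume control $\E\PerB(X_0;U_n)=c\,\leb(U_n)$ with $c:=2\sum_j\Lip_j(S_2,0)$.

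\textsl{Sufficiency: stationarization.} The remaining task, and the main obstacle, is to convert $X_0$ into a \emph{stationary} RAMS; no finite averaging can achieve this, so I would stationarize by a translation (F\o lner) average and pass to a limit. Writing $\theta_v$ for translation by $v$, let $V_T$ be uniform on $[0,T)^d$ independent of $X_0$ and let $\mu_T$ be the law of $\theta_{V_T}X_0$. Because $\delta_{y;W}(\theta_v A)=\delta_{y;W-v}(A)$ and $\leb(W-v)=\leb(W)$, one computes $\E_{\mu_T}\delta_{y;W}=\tfrac1{T^d}\int_{[0,T)^d}S_2(y)\leb(W-v)\,dv=S_2(y)\leb(W)$ \emph{exactly}, for every $T$. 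The perimeter control $\E\PerB(X_0;U_n)=c\,\leb(U_n)$, together with $\int_{[0,T)^d}\PerB(A;U_n-v)\,dv\le \leb(U_n)\,\PerB(A;U_{n+T})$, gives $\sup_T\E_{\mu_T}\PerB(\cdot;U_n)\le 2^d c\,\leb(U_n)<\infty$ for each $n$; by Markov's inequality and the BV compactness theorem this makes $\{\mu_T\}$ tight for the local convergence in measure, so I can extract a weak limit $X$ along some $T_k\to\infty$. A standard boundary estimate, $\leb\bigl((v_0+[0,T)^d)\,\SymD\,[0,T)^d\bigr)=O(T^{d-1})$, shows the limit is translation invariant, hence $X$ is stationary, and the continuity and boundedness of $\delta_{y;W}$ (Proposition~\ref{prop:continuity_local_covariogram}) pass $\E_{\mu_T}\delta_{y;W}=S_2(y)\leb(W)$ to the limit, giving $\gamma_X(y;W)=S_2(y)\leb(W)$ and thus $\gamma^s_X=S_2$. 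Finally the specific perimeter of $X$ needs no lower-semicontinuity argument: once $X$ is stationary with $\gamma^s_X=S_2$, Proposition~\ref{prop:specific_covariogram_directional_lipschitz_constant} gives $\PerB^s(X)=\sum_j V^s_{e_j}(X)=2\sum_j\Lip_j(S_2,0)=c\le r$ automatically.

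The decisive point, and where I expect the real work to lie, is the stationarization: the weighted bound $\E\PerB^\beta(X_0)\le r$ by itself only controls $\E\PerB(X_0;U_n)$ up to the geometric rate $r/\beta_n=r\,2^n(2n)^d$, which is far too weak for the translation average to be tight. It is only the exact per-volume identity $\E\PerB(X_0;U_n)=c\,\leb(U_n)$, extracted from $\gamma_{X_0}(y;W)=S_2(y)\leb(W)$ via Corollary~\ref{cor:tcd-per}, that makes $\{\mu_T\}$ tight and lets the whole scheme close.
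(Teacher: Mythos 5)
Your proof is correct, and while the necessity direction and the reduction of sufficiency to Theorem~\ref{thm:realisability_over_Rd_perBbeta} (via $\gamma(y;W)=S_2(y)\leb(W)$ and the bound $L_j(\gamma,U_n)\le 2\leb(U_n)\Lip_j(S_2,0)$) coincide with the paper's, you handle the stationarization in a genuinely different way. The paper does not average over translations: it invokes a Markov--Kakutani fixed-point variant (Theorem~\ref{thm:markov-kakutani}) on the convex compact family of laws realising $\Phi$ with $\E\PerB^{\beta}(\theta X)\le r$ for every translation $\theta$, and the work consists in verifying the sub-invariance estimate $\PerB^{\beta}(t+A)\le 2^{\lceil t\rceil_{\infty}}\lceil t\rceil_{\infty}^{d}\PerB^{\beta}(A)$, the pointwise approximation of $\PerB^{\beta}$ from below by elements of $G$, and the translation invariance of $\Phi$, so that this family is non-empty and translation stable. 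You instead stationarize by hand: a uniform random shift over $[0,T)^d$ preserves $\E\,\delta_{y;W}=S_2(y)\leb(W)$ exactly, the per-volume bound $\E\PerB(X_0;W)\le c\,\leb(W)$ extracted from Corollary~\ref{cor:tcd-per} gives uniform control of $\E_{\mu_T}\PerB(\cdot;U_n)$ and hence tightness via the BV compactness theorem (the same fact the paper uses to show $\PerB^{\beta}$ is a regularity modulus), and a Prokhorov limit along $T_k\to\infty$ is stationary and still realises $\gamma$ because each $\delta_{y;W}$ is bounded and continuous. Your route is more self-contained and isolates a nice fact the paper does not exploit at this stage -- that the non-stationary solution already has mean perimeter growing linearly in volume -- at the cost of needing $\M$ to be Polish and an extra weak-limit argument; the paper's route reuses the abstract machinery of \cite{LacMol} and avoids any limit of measures. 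Two cosmetic remarks: for condition~\eqref{eq:avg-perimeter} and for tightness only the inequalities $L_j(\gamma,U_n)\le 2\leb(U_n)\Lip_j(S_2,0)$ and $\E\PerB(X_0;U_{n+T})\le c\,\leb(U_{n+T})$ are needed, so nothing actually hinges on the Fekete-type equality you assert; and both proofs finish identically by reading $\PerB^{s}(X)\le r$ off Proposition~\ref{prop:specific_covariogram_directional_lipschitz_constant}.
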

We shall use a variant of Theorem 2.10(ii) from \cite{LacMol}, where the monotonicity assumption is replaced by a domination.
 
\begin{theorem}
\label{thm:markov-kakutani}
Let $G$, $\chi$, $\Phi$ be like in Theorem~\ref{thm_lacmol_realisability}, and assume that $G$ is stable under the action of a group of transformations $\Theta $ of $\mathbb{R}^{d}$: For all $\theta \in \Theta$, $g\in G$, $\theta  g: A\mapsto g(\theta A)$ is a function of $G$.  
Assume furthermore that  there is a sequence 
$ (g_{n})_{n\geq 1}$ of functions of $G$ such that $0\leq g_{n}\leq \chi $ and
\begin{equation*}
g_{n}(A) \longrightarrow \chi (A)~\text{as}~n\to+\infty,\quad A\in \M,
\end{equation*}
and that $\chi $ is \emph{sub-invariant}: For every $\theta \in \Theta $, there is a constant $C_{\theta }>0$ such that
\begin{equation}
\label{eq:sub-stationarity}
\chi (\theta A )\leq C_{\theta }\chi (A),\quad A\in \M.
\end{equation} 
Then, if $\Phi $ is invariant under the action of $\Theta$, that is,
\begin{equation*}
\Phi (\theta g)=\Phi (g),\quad  g\in G,~\theta \in \Theta ,
\end{equation*}
for any given $r\geq 0$, 
there exists a $\Theta$-invariant RAMS $X$ such that 
$$
\begin{cases}
\E g(X) = \Phi (g),\quad g\in G,\\
\E \chi (X)\leq r
\end{cases}
$$
if and only if \eqref{eq:sup-inf-condition} holds.
\end{theorem}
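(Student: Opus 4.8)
The plan is to deduce the invariant statement from the non-invariant realisability result, Theorem~\ref{thm_lacmol_realisability}, and then to \emph{invariantise} the resulting law by a Markov--Kakutani fixed-point argument. The only new ingredient compared with the monotone version of~\cite{LacMol} is the domination $0\le g_n\le\chi$ with $g_n\to\chi$: it is precisely what allows the regularity constraint $\E\chi(X)\le r$ to survive the symmetrisation even though $\chi$ is merely sub-invariant. The necessity costs nothing and does not use the invariance at all: if $X$ is any RAMS with $\E g(X)=\Phi(g)$ for $g\in G$ and $\E\chi(X)\le r$, then for each $g$ the pointwise inequality $\chi(A)-g(A)+\Phi(g)\ge\inf_{A'}[\chi(A')-g(A')+\Phi(g)]$, integrated against the law of $X$, gives $\inf_{A}[\chi(A)-g(A)+\Phi(g)]\le\E\chi(X)-\Phi(g)+\Phi(g)=\E\chi(X)\le r$, and taking the supremum over $g$ yields~\eqref{eq:sup-inf-condition}.

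For the converse I would introduce the set $\mathcal P$ of laws $\mu$ of RAMS satisfying $\int_\M g\,d\mu=\Phi(g)$ for all $g\in G$. Since~\eqref{eq:sup-inf-condition} holds, Theorem~\ref{thm_lacmol_realisability} already produces one law meeting the moment and regularity conditions, so $\mathcal P\neq\emptyset$. The key observation is that the regularity bound is then \emph{automatic} on $\mathcal P$: testing~\eqref{eq:sup-inf-condition} with $g=g_n$ and using $\chi-g_n\ge0$ gives $\Phi(g_n)\le r$ for every $n$, whence for any $\mu\in\mathcal P$ Fatou's lemma applied to $g_n\to\chi$ yields $\int_\M\chi\,d\mu\le\liminf_n\int_\M g_n\,d\mu=\liminf_n\Phi(g_n)\le r$. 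This uniform bound makes $\mathcal P$ tight, because $\mu(\{\chi>M\})\le r/M$ while the level set $\{\chi\le M\}$ is relatively compact by the regularity-modulus property (with the constant function $M\in G$); and $\mathcal P$ is weakly closed since it is cut out by the conditions $\int g\,d\mu=\Phi(g)$ with $g$ bounded continuous. Hence $\mathcal P$ is a non-empty convex weakly compact set.

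It then remains to let $\Theta$ act and to extract a fixed point. For $\theta\in\Theta$ the pushforward $\theta_\ast\mu$ satisfies $\int g\,d(\theta_\ast\mu)=\int(\theta g)\,d\mu=\Phi(\theta g)=\Phi(g)$ by the invariance of $\Phi$, so $\theta_\ast$ maps $\mathcal P$ into itself; the bound $\int\chi\,d(\theta_\ast\mu)\le r$ is again automatic from the domination, while~\eqref{eq:sub-stationarity} guarantees that $\chi\circ\theta$ is finite wherever $\chi$ is, so no mass escapes. Each $\theta_\ast$ is affine and weakly continuous because $A\mapsto\theta A$ is continuous for the convergence in measure, and the family $\{\theta_\ast\}_{\theta\in\Theta}$ commutes. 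Applying the Markov--Kakutani theorem to this commuting family of affine continuous self-maps of the compact convex set $\mathcal P$ produces a common fixed point $\mu^\ast$, that is, a $\Theta$-invariant law; the associated $\Theta$-invariant RAMS $X$ then satisfies $\E g(X)=\Phi(g)$ for all $g\in G$ and $\E\chi(X)\le r$, as required.

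The main obstacle I anticipate lies in the passage through the non-compact, non-locally-compact space $\M$: one must verify that $\M$ is Polish for the convergence in measure so that Prokhorov's theorem applies, that the level sets $\{\chi\le M\}$ are genuinely relatively compact (exactly the defining property of a regularity modulus), and that the pushforward action is weakly continuous. The conceptual point — and the place where the domination replaces the monotonicity of~\cite{LacMol} — is the automatic propagation of $\int_\M\chi\,d\mu\le r$ to every moment-matching law, which is what reconciles the constraint with invariantisation even though $\chi$ is only sub-invariant.
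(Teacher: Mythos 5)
Your proof is correct and follows the same overall architecture as the paper's (produce one realizing law via Theorem~\ref{thm_lacmol_realisability}, then apply Markov--Kakutani to the commuting family of pushforwards acting on a non-empty convex weakly compact set of realizing laws), but the mechanism by which you preserve the regularity constraint is genuinely different and worth contrasting. The paper defines its invariant set as the laws satisfying $\E\chi(\theta X)\le r$ for \emph{every} $\theta$, and verifies non-emptiness by the chain $\E\chi(\theta X)=\lim_n\E g_n(\theta X)=\lim_n\E g_n(X)=\E\chi(X)\le r$; the two dominated-convergence steps there require the a priori finiteness $\E\chi(\theta X)\le C_\theta\E\chi(X)<\infty$, which is exactly where the sub-invariance hypothesis~\eqref{eq:sub-stationarity} enters. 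You instead observe that testing~\eqref{eq:sup-inf-condition} with $g=g_n$ and using $\chi-g_n\ge 0$ gives $\Phi(g_n)\le r$, so Fatou (which needs only $g_n\ge 0$ and $g_n\to\chi$ pointwise) forces $\int_{\M}\chi\,d\mu\le r$ for \emph{every} moment-matching law $\mu$; since $\theta_*\mu$ is again moment-matching by the invariance of $\Phi$, nothing needs to be preserved. This buys two things: the constraint set can be taken without any $\chi$-conditions (so closedness is immediate from $G$ consisting of bounded continuous functions, and compactness follows from the uniform bound plus the regularity-modulus property via Prokhorov), and the hypothesis~\eqref{eq:sub-stationarity} becomes superfluous in your route --- your aside that it prevents ``mass escaping'' is not actually needed anywhere. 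The only points you share with the paper as implicit assumptions are the commutativity of $\{\theta_*\}$ (true for the translation group used in the application, but not stated for a general group $\Theta$) and the continuity of $A\mapsto\theta A$ for the local convergence in measure; neither is addressed in the paper's proof either.
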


\begin{proof}The proof is  the same as the one of Theorem 2.10 (ii) from \cite{LacMol}, itself based on Proposition 4.1 from Kuna \textit{et al.}~\cite{KunLebSpe11}. 
It consists in checking hypotheses of the Markov-Kakutani fixed point theorem. Let $M$ be the family of random elements $X$ that realise $\Phi $ on $G$, and satisfy 
 $
\E \chi (\theta X)\leq r$
  for every $\theta \in \mathbb{R}^{d}$. The family $M$ is easily seen to be convex with respect to addition of measures, it is compact by Theorem 2.8 from \cite{LacMol}, and invariant under the action of  $\Theta $ thanks to the $\Theta$-invariance of $\Phi$. It remains to prove that $M$ is not empty.
Since \eqref{eq:sup-inf-condition} is in order, Theorem~\ref{thm_lacmol_realisability} yields the existence of a RAMS $X$ realising $\Phi $ and such that $\E \chi (X)\leq r$.  For $\theta \in \Theta $, by Lebesgue theorem,  
\begin{equation*}
\E \chi (\theta X )=\E \lim_{n}g_{n}(\theta X )=\lim_{n}\E g_{n}(\theta X )=\lim_{n}\E g_{n}(X)=\E\lim_{n}g_{n}(X)=\E \chi (X)\leq r,
\end{equation*}where we have used the fact that 
$ \E \sup_{n}g_{n}(X)<\infty $
and 
\begin{equation*}
\E \sup_{n} g_{n}(\theta X )\leq \E \chi (\theta X )\leq C_{\theta }\E \chi (X)<\infty.
\end{equation*}
It follows that $M\ni X$ is non-empty, whence by the Markov-Kakutani theorem the mappings $X\mapsto \theta X$, $\theta \in \Theta $, admit a common fixed point $X$ (considered here as a probability measure), which is therefore invariant under $\Theta $. 
\end{proof}
 
\begin{proof}[Proof of Theorem~\ref{thm:stationary-realisability_over_Rd_perB}]
\textsl{Necessity:}
Assume $S_{2}$ is the specific covariogram of a stationary RAMS $X$ with $\PerB^{s}(X)\leq r$. 
Then $S_{2}$ is admissible, and by Proposition~\ref{prop:specific_covariogram_directional_lipschitz_constant}, 
$$
\sum_{j=1}^d \Lip_j( S_2,0)
= \sum_{j=1}^d \frac{1}{2} V^s_{e_j}(X)
= \frac{1}{2} \PerB^s(X)
\leq \frac{r}{2}.
$$

\noindent\textsl{Sufficiency:}
Define $\gamma (y;W) = \leb(W) S_2(y)$.
Then, for all $W\in\W$ and $j\in\{1,\dots,d\}$,
$$
\begin{aligned}
& L_j(\gamma, W)\\
& = \sup_{\eps \in \mathbb{R}}\frac{1}{ | \eps  | }\left[\L^d(W\ominus[-\eps e_j,0])(S_2(0)-S_2(\eps e_j)) + \L^d(W\ominus[0,\eps e_j])(S_2(0) - S_2(-\eps e_j) \right]
\\
& \leq 2 \L^d(W) \Lip_{ {j}}(S_{2},0).
\end{aligned}
$$
Hence, since $\sum_{j=1}^{d}\Lip_{j}(S_{2},0)\leq \frac{r}{2}$, 
$$
\sum_{n\geq 1}\beta_{n}\left(\sum_{j=1}^d L_j (\gamma, U_{n})\right)
\leq \sum_{n\geq 1}\beta_{n}  2 \L^d(U_n)\sum_{j=1}^{d}\Lip_{j}(S_{2}) \leq r.
$$
Hence, according to Theorem~\ref{thm:realisability_over_Rd_perBbeta},
$\gamma$ is the local covariogram of a RAMS $X$, 
and consequently, according to Theorem~\ref{thm_lacmol_realisability}, 
$\gamma$ satisfies \eqref{eq:sup-inf-condition} with $\chi = \PerB^\beta$ and $ G$ the space of all functionals of the form $g=\sum_{i=1}^{q}a_{i}\delta _{y_{i};W_{i}}$.
For $t\in \mathbb{R}^{d}, y\in \mathbb{R}^{d}, W\in \W, A\in \M$, we have
$$
\begin{aligned}
\theta_{t} \delta_{y;W} (A) = \delta _{y;W}(\theta_{t}A)=\leb((t+A)\cap (t+y+A)\cap W) & = \leb(A\cap (A+y)\cap (-t+W))\\
& =\delta _{y;-t+W}(A),
\end{aligned}
$$
whence the space $G$ generated by constant functions and functions $\delta _{y;W},y\in \mathbb{R}^{d},W\in \W$ is invariant under the action of the group $\Theta = \{\theta_t:~t\in\R^d\}$ of  translations. The linear functional defined by 
\begin{equation*}
\Phi (\delta _{y;W})=\leb(W)\gamma^{s}(y)
\end{equation*}
is invariant under the action of translations $\theta _{t}, t\in \mathbb{R}^{d}$.
For $t\in \mathbb{R}^{d}$,
let $\lceil t \rceil_\infty  = \lceil \|t\|_\infty \rceil$ be the smallest integer larger than $\|t\|_\infty$. 
Then, recalling that $U_n$ denotes the hypercube $(-n,n)^d$,  
$-t + U_{n} \subset (-n-\|t\|_\infty,n+\|t\|_\infty)^d \subset U_{n+\lceil t \rceil_\infty}$.
Hence, for $A\in \M$, 
$$
\PerB^\beta(t+A)
= \sum_{n=1}^{+\infty}\beta_{n}\PerB(t+A;U_{n}) 
= \sum_{n=1}^{+\infty}\beta_{n} \PerB(A; -t + U_{n})
\leq \sum_{n=1}^{+\infty} \beta_{n} \PerB(A; U_{n+\lceil t \rceil_\infty}).
$$
Since $\beta_n = 2^{-n}(2n)^{-d} = 2^{\lceil t \rceil_\infty}\left( \frac{n+\lceil t \rceil_\infty}{n} \right)^{d}\beta _{n+\lceil t \rceil_\infty} \leq 2^{\lceil t \rceil_\infty}\lceil t \rceil_\infty^{d} \beta _{n+\lceil t \rceil_\infty}$, we have
$$
\PerB^\beta(t+A) \leq 2^{\lceil t \rceil_\infty}\lceil t \rceil_\infty^{d} \sum_{n=1}^{+\infty} \beta _{n+\lceil t \rceil_\infty} \PerB(A; U_{n+\lceil t \rceil_\infty})
\leq 2^{\lceil t \rceil_\infty}\lceil t \rceil_\infty^{d} \PerB^\beta(A),\quad A\in\M,
$$
whence~\eqref{eq:sub-stationarity} is in order for $\chi = \PerB^\beta$.
To apply Theorem~\ref{thm:markov-kakutani} it only remains to check that $\PerB^\beta$ can be pointwise approximated from below by functions from $G$.
According to Proposition~\ref{prop:per-approx}, for $A\in \M$, and $U$ a bounded  open set   of $\mathbb{R}^{d}$,  
\begin{equation*}
\PerB(A;U)=\lim_{n}g^{U}_{n }(A) 
\end{equation*}
for some function $g_{n}^{U}\in G$ mentioned in Proposition~\ref{prop:per-approx}
that satisfy 
\begin{equation}
\label{eq:intermed}
0\leq g_{n}^{U}\leq \PerB(\cdot ;U).
\end{equation}
Define 
\begin{equation*}
g_{n}(A)=\sum_{m=1}^{n}\beta _{m}g_{n}^{U_{m}}(A).
\end{equation*}
Let $A\in \M$ with $\PerB^{\beta }(A)<\infty $.
Since for every $m\geq 1$, $g_{n}^{U_{m}}(A)\to \Per(A;U_{m})$ as $n\to \infty $, the Lebesgue theorem with 
$0\leq g_{n}(A)\leq \PerB^{\beta }(A)<\infty$
ensures that $g_{n}(A)\to \PerB^{\beta }(A)$ as $n\to \infty $.

If $A\in \M$ is such that $\PerB^{\beta }(A)=\infty $, let $M>0$, and $n_{0}$ be such that $\sum_{m=1}^{n_{0}}\beta _{m}\PerB(A;U_{m})\geq M+1$. Let $n_{1}\geq n_{0}$ be such that for $n\geq n_{1}$, $g_{n}^{U_{m}}(A)\geq \PerB(A;U_{m})-1$ for $1\leq m\leq n_{0}$. Then, for $n\geq n_{1}$, 
$$
\begin{aligned}
g_{n}(A)\geq \sum_{m=1}^{n_{0}}\beta _{m}\PerB(A;U_{m})-\sum_{m\geq 1}\beta _{m}\geq M+1-1\geq M.
\end{aligned}
$$It follows that $g_{n}(A)\to \infty =\PerB^{\beta }(A)$.

Hence, according to Theorem~\ref{thm:markov-kakutani}, there exists a stationary RAMS $X$ realising $\gamma$, which implies that $\gamma^s_X = S_2$.
Then, according to Proposition~\ref{prop:specific_covariogram_directional_lipschitz_constant}, 
$$
\PerB^{s}(X) = \sum_{j=1}^d V_{e_j}^s (X) = 2 \sum_{j=1}^d \Lip_j(S_2,0) \leq r.
$$
\end{proof}

\subsection{Covariogram realisability problem for RACS of $\mathbb{R}$}

The goal of this section is to establish a result similar to Theorem~\ref{thm:stationary-realisability_over_Rd_perB} for the specific covariogram of one-dimensional stationary RACS.

First let us discuss the definition of local covariogram admissibility of functions in arbitrary dimension $d\geq 1$.
By analogy with the definition of $\M$-local covariogram admissible functions (see Definition~\ref{def:covariogram_admissible_measurable}), 
when considering RACS of $\R^d$,  
one says that a function $\gamma:\R^d\times \W \rightarrow \R$ is \emph{$\F$-local covariogram admissible} if
for all 5-tuples $(q\geq 1, (a_i)\in \mathbb{R}^{q}  , (y_i)\in (\mathbb{R}^{d})^{q} , (W_i)\in \W^q, c\in  \mathbb{R})$,
$$
\left[\forall F\in \F,\quad c+\sum_{i=1}^{q}a_{i}\delta_{y_{i};W_i}(F)\geq 0\right]
\Rightarrow
c+\sum_{i=1}^{q}a_{i}\gamma(y_i;W_i)\geq 0.
$$
Besides, one says that $S_2:\R\rightarrow\R$ is \emph{$\F$-specific covariogram admissible} if $(y,W)\mapsto S_2(y) \L^d(W)$ is 
$\F$-local covariogram admissible.
However, this distinction is superfluous since these two notions of admissibility are strictly equivalent.

\begin{proposition}
\label{prop:equivalence_positiveness_closed_measurable}
A function $\gamma:\R^d\times \W \rightarrow \R$ is $\F$-local covariogram admissible if and only if it is $\M$-local covariogram admissible.
\end{proposition}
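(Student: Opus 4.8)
The plan is to exploit the inclusion $\F\subset\M$ together with the continuity of local covariograms proved in Proposition~\ref{prop:continuity_local_covariogram}. One implication is essentially immediate: since every closed set is, in particular, a (Lebesgue class of) measurable set, the premise ``$c+\sum_{i}a_{i}\delta_{y_{i};W_{i}}(A)\geq 0$ for all $A\in\M$'' entails the same inequality for all $F\in\F$. Consequently, if $\gamma$ is $\F$-admissible, then it is $\M$-admissible: for any $5$-tuple whose associated affine combination of covariograms is nonnegative on $\M$, it is \emph{a fortiori} nonnegative on $\F$, and $\F$-admissibility then yields $c+\sum_{i}a_{i}\gamma(y_{i};W_{i})\geq 0$.

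The substantial direction is $\M$-admissible $\Rightarrow$ $\F$-admissible, which I would reduce to the density of $\F$ in $\M$ for the local convergence in measure. Granting this, fix a $5$-tuple and assume the affine functional $A\mapsto c+\sum_{i}a_{i}\delta_{y_{i};W_{i}}(A)$ is nonnegative on $\F$. By part (iii) of Proposition~\ref{prop:continuity_local_covariogram}, each $\delta_{y_{i};W_{i}}$ is continuous for the local convergence in measure, hence so is the finite combination; approximating an arbitrary $A\in\M$ by closed sets and passing to the limit shows that the combination is nonnegative on all of $\M$. The $\M$-admissibility of $\gamma$ then delivers the conclusion $c+\sum_{i}a_{i}\gamma(y_{i};W_{i})\geq 0$, which is exactly $\F$-admissibility.

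It remains to construct the approximating closed sets, which is where the only genuine work lies. I would invoke the inner regularity of the Lebesgue measure: for each $n$, applying inner regularity to $A\cap B(0,n)$ produces a compact set $K_{n}\subset A\cap B(0,n)$ with $\L^{d}\bigl((A\cap B(0,n))\setminus K_{n}\bigr)<1/n$. Since $K_{n}\subset A$ one has $K_{n}\Delta A=A\setminus K_{n}$, and for any fixed bounded open set $U$ (so that $U\subset B(0,n)$ for $n$ large) the set $(A\Delta K_{n})\cap U$ is contained in $(A\cap B(0,n))\setminus K_{n}$, of measure $<1/n$. Therefore $\L^{d}\bigl((A\Delta K_{n})\cap U\bigr)\to 0$ for every bounded open $U$, so the compact, hence closed, sets $K_{n}$ converge to $A$ in the sense of local convergence in measure. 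The main obstacle is exactly this simultaneous-in-$U$ approximation by a single sequence of closed sets; the expanding-ball device combined with inner regularity resolves it cleanly, and no finer geometric control is needed because local covariograms depend on $A$ only through its Lebesgue class on bounded windows.
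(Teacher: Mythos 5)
Your proposal is correct and follows essentially the same route as the paper: one direction from the inclusion $\F\subset\M$, and the other by approximating an arbitrary $A\in\M$ by compact subsets and passing to the limit using the continuity of $\delta_{y;W}$ for the local convergence in measure (Proposition~\ref{prop:continuity_local_covariogram}(iii)). The only cosmetic difference is that the paper first truncates $A$ to a bounded set via the locality property~\eqref{eq:locality_local_covariogram} and then invokes the Lusin theorem, whereas you keep $A$ unbounded and use inner regularity on the expanding balls $B(0,n)$; both devices yield the same approximating sequence of closed sets.
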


\begin{proof}[Proof of Proposition~\ref{prop:equivalence_positiveness_closed_measurable}]
The proof of this equivalence relies on the continuity of local covariograms for the convergence in measure and the density of compact sets due to the Lusin theorem.
It consists in showing that for all 5-tuples
$(q\geq 1, (a_i)\in \mathbb{R}^{q}  , (y_i)\in (\mathbb{R}^{d})^{q} , (W_i)\in \W^q, c\in  \mathbb{R})$,
$$
\left[\forall F\in \F,\quad c+\sum_{i=1}^{q}a_{i}\delta_{y_{i};W_i}(F)\geq 0\right]
\Leftrightarrow
\left[\forall A\in \M,\quad c+\sum_{i=1}^{q}a_{i}\delta_{y_{i}; W_i}(A)\geq 0\right].
$$
Since $\F\subset \M$, the implication $\Leftarrow$ is clear.
To show the converse, let $(q\geq 1, (a_i)\in \mathbb{R}^{q}  , (y_i)\in (\mathbb{R}^{d})^{q} , (W_i)\in \W^q, c\in  \mathbb{R})$ be such that $\forall F\in \F,\quad c+\sum_{i=1}^{q}a_{i}\delta_{y_{i}}(F)\geq 0$, and let us show that this inequality is valid for any $A\in \M$.
One can suppose that $A$ is bounded since according to~\eqref{eq:locality_local_covariogram},
one can replace $A$ by 
$A \cap \bigcup_{i=1}^q W_i\cup(-y_i+W_i)$
without changing the value of $c+\sum_{i=1}^{q}a_{i}\delta_{y_{i}; W_i}(A)$.
Then, by the Lusin theorem (see e.g.~\cite{Evans_Gariepy_measure_theory_and_fine_properties_of_functions_1992}), there exists a sequence of compact sets $K_n\subset A$ that converges in measure towards $A$ ,\emph{i.e.} $\L^d(A\Delta K_n)\rightarrow 0$.
Since each $K_n$ is closed, for all $n$, $c+\sum_{i=1}^{q}a_{i}\delta_{y_{i}}(K_n)\geq 0$.
Since the sequence $(K_n)_{n\in\N}$ converges in measure towards $A$, 
thanks to the fact that the local covariogram $B\mapsto \delta_{y;W}(B)$ is continuous for the local convergence in measure (see Proposition~\ref{prop:continuity_local_covariogram})
for all $(y_i;W_i)$, $\delta_{y_i;W_i}(K_n)$ tends to $\delta_{y_i;W_i}(A)$, and thus letting $n$ tend to $+\infty$ gives the inequality $c+\sum_{i=1}^{q}a_{i}\delta_{y_{i};W_i}(A)\geq 0$.
\end{proof}

Now that this technical point has been clarified we are in position to formulate our result for the realisability of specific covariogram of stationary RACS of $\R$.

\begin{theorem}
\label{thm:realisability_for_1dRACS}
Suppose that the probability space $(\Omega, \mathcal{A},\P)$ is complete.
Let $S_2 : \R\rightarrow\R$ be a given function and let $r>0$.
Then $S_2$ is the covariogram of a stationary RACS $Z\subset\R$ such that
$$
\E \left(\H^0(\partial Z)\cap (0,1)\right) \leq r
$$
if and only if $S_2$ is $\F$-specific covariogram admissible and Lipschitz with Lipschitz constant $L \leq \frac{r}{2}$.
\end{theorem}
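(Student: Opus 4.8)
The plan is to deduce the statement from the one-dimensional case of the RAMS result, Theorem~\ref{thm:stationary-realisability_over_Rd_perB}, transported to closed sets via the correspondence of Proposition~\ref{prop_1d_racs_representative}. I would first record three identifications that make the two statements line up. In dimension $d=1$ there is a single canonical direction, so $\PerB = \Per = V_{e_1}$ and $\PerB^{s} = \Per^{s}$. By Proposition~\ref{prop:specific_covariogram_directional_lipschitz_constant}, the specific covariogram of a stationary RAMS is even, is Lipschitz on $\R$ if and only if it is Lipschitz at $0$, and its global Lipschitz constant $L$ equals $\Lip_1(\cdot,0)$; hence the condition $\sum_{j}\Lip_j(S_2,0)\leq r/2$ of Theorem~\ref{thm:stationary-realisability_over_Rd_perB} reads exactly $L\leq r/2$. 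Finally, by Proposition~\ref{prop:equivalence_positiveness_closed_measurable}, $\F$- and $\M$-specific covariogram admissibility coincide. With these in hand, only the perimeter term has to be tracked in each direction.

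For necessity I would start from a stationary RACS $Z$ with specific covariogram $S_2$ and $\E(\H^0(\partial Z\cap(0,1)))\leq r$, and pass to its Lebesgue class $X$ (a RAMS by Proposition~\ref{prop_a_closed_random_measurable_set_is_a_racs}). This $X$ is stationary because passing to the Lebesgue class commutes with translations, and it has the same specific covariogram $S_2$, since the covariogram depends only on the Lebesgue class. For each $\omega$ the variational perimeter is unchanged, $\Per(X(\omega);(0,1))=\Per(Z(\omega);(0,1))$, and since $Z(\omega)$ is closed one has $\Per(Z(\omega);(0,1))\leq \H^0(\partial Z(\omega)\cap(0,1))$; this is the inequality recalled for one-dimensional sets, trivial when the right-hand side is infinite and, when it is finite, a consequence of $Z(\omega)\cap(0,1)$ being a finite union of intervals up to finitely many isolated points. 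Taking expectations gives $\PerB^{s}(X)=\Per^{s}(X)\leq r$, so the necessity half of Theorem~\ref{thm:stationary-realisability_over_Rd_perB} shows $S_2$ is $\M$-admissible with $\Lip_1(S_2,0)\leq r/2$, which is exactly $\F$-admissibility together with $L\leq r/2$.

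For sufficiency I would assume $S_2$ is $\F$-specific covariogram admissible (equivalently $\M$-admissible) and Lipschitz with $L\leq r/2$, so that $\Lip_1(S_2,0)\leq L\leq r/2$. Feeding this into the sufficiency half of Theorem~\ref{thm:stationary-realisability_over_Rd_perB} produces a stationary RAMS $X$ with $\gamma_X^{s}=S_2$ and $\PerB^{s}(X)=\Per^{s}(X)\leq r$; in particular $\Per(X;(0,1))$ is a.s.\ finite and $X$ has a.s.\ locally finite perimeter. After completing the probability space carrying $X$ if necessary (which changes neither its law nor its being a RAMS), I would apply Proposition~\ref{prop_1d_racs_representative} to obtain a RACS $Z$ with, $\P$-almost surely and for all $a<b$, $\L^1(X(\omega)\SymD Z(\omega))=0$ and $\Per(X(\omega);(a,b))=\H^0(\partial Z(\omega)\cap(a,b))$. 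Because $Z(\omega)$ and $X(\omega)$ share the same Lebesgue class almost surely, the specific covariogram of $Z$ equals $\gamma_X^{s}=S_2$, and the equality of perimeters yields $\E(\H^0(\partial Z\cap(0,1)))=\Per^{s}(X)\leq r$.

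The last thing to verify is that $Z$ is stationary. Here I would note that the one-dimensional closed representative is a translation-equivariant measurable map $\Psi:\M\to\F$ --- the closed representative of $t+X(\omega)$ is $t+Z(\omega)$, since the Lebesgue density obeys $D(x,t+A)=D(x-t,A)$ --- and $Z=\Psi(X)$, so translation invariance of the law of $X$ forces that of the law of $Z$, its pushforward under $\Psi$. I expect no single estimate to be the real difficulty: all the analytic weight already sits in Theorem~\ref{thm:stationary-realisability_over_Rd_perB} and in Proposition~\ref{prop_1d_racs_representative}. The one genuinely delicate point is the bookkeeping of the perimeter term across the two frameworks --- it must pass as the inequality $\Per\leq\H^0(\partial\,\cdot)$ for necessity (absorbing the isolated-point defect of an arbitrary closed representative) and as an exact equality for sufficiency (since Proposition~\ref{prop_1d_racs_representative} selects the canonical Lebesgue-density representative, which carries no spurious isolated points) --- together with the preservation of stationarity just described.
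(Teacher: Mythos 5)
Your proof is correct and follows essentially the same route as the paper: both directions reduce to Theorem~\ref{thm:stationary-realisability_over_Rd_perB} via Proposition~\ref{prop:equivalence_positiveness_closed_measurable}, with the inequality $\Per(Z;(0,1))\leq\H^0(\partial Z\cap(0,1))$ for necessity and the exact equality supplied by Proposition~\ref{prop_1d_racs_representative} for sufficiency. Your explicit verification that the closed representative $Z=\Psi(X)$ is stationary (via translation-equivariance of the Lebesgue-density representative) is a detail the paper leaves implicit, and is a welcome addition.
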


\begin{proof}
\textsl{Necessity:}
If there exists a stationary RACS $Z\subset \R$ such that $\E \left(\H^0(\partial Z)\cap (0,1)\right) \leq r$, then $S_2$ is necessarily $\F$-specific covariogram admissible and, according to Proposition~\ref{prop:specific_covariogram_directional_lipschitz_constant}, $S_2$ is Lipschitz with Lipschitz constant
$L = \frac{1}{2} \E\left(\Per(Z);(0,1)\right)$.
But  $\Per(Z;(0,1))\leq \H^0(\partial Z\cap (0,1))$ yields $L \leq \frac{1}{2} \E \left(\H^0(\partial Z\cap (0,1) )\right) \leq  \frac{r}{2}$.

\textsl{Sufficiency:}
Suppose that $S_2$ is $\F$-specific covariogram admissible and Lipschitz with Lipschitz constant $L \leq \frac{r}{2}$.
Then, by Proposition~\ref{prop:equivalence_positiveness_closed_measurable}, $\gamma$ is $\M$-specific covariogram admissible, and thus by Theorem~\ref{thm:stationary-realisability_over_Rd_perB}
there exists a RAMS $X\subset\R$ such that $S_2$ is the specific covariogram of $X$ and $\E(\Per(X);(0,1))\leq r$.
By Proposition~\ref{prop_1d_racs_representative}, there exists a RACS $Z\subset \R$, equivalent in measure to $X$, such that $\Per(X;(0,1)) = \H^0(\partial Z\cap (0,1))$ a.s.
Then the specific covariogram of $Z$ is also equal to $S_2$ and $\E \left(\H^0(\partial Z\cap (0,1))\right) = \E(\Per(X;(0,1)))\leq r$.
\end{proof}

Note that although the geometry of sets with finite perimeter on the line seems quite simplistic, a direct proof of the realisability result above is far from trivial.

\appendix

\section{The Radon-Nikodym theorem for random measures}

\begin{theorem}
\label{thm_radon_nikodym_random_measure_ac_lebesgue}
Let $U$ be an open subset of $\R^d$.
Let $\mu:\Omega \mapsto \mathbf{M}(U)$ be a random signed Radon measure on $U$ such that for all $\omega\in\Omega$,
the measure $\mu(\omega,\cdot)$ is absolutely continuous with respect to the Lebesgue measure $\leb$.
Then, there exists a jointly measurable map $f$ on
$
 \left(\Omega\times\R^d, \mathcal{A} \otimes \mathcal{B}\left(U\right)\right) 
 $ such that for all $\omega\in\Omega$, $f(\omega,\cdot)$ is a Radon-Nikodym derivative of $\mu(\omega,\cdot)$ with respect to the Lebesgue measure $\leb$.\end{theorem}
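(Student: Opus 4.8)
The plan is to produce $f$ explicitly as a $\limsup$ of normalised difference quotients of $\mu$, so that joint measurability comes for free, and then to identify each fibre $f(\omega,\cdot)$ with a density by a fibrewise application of the Lebesgue differentiation theorem. Fix once and for all a sequence $r_n\downarrow 0$, write $B(x,r)=\{y\in\R^d:\|y-x\|<r\}$ for the open Euclidean ball, and set
\[
f(\omega,x)=\limsup_{n\to\infty}\frac{\mu\big(\omega,B(x,r_n)\big)}{\leb\big(B(x,r_n)\big)},\qquad (\omega,x)\in\Omega\times U .
\]

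The heart of the proof is the joint measurability of the ball-evaluation map. I would first argue that, for each fixed $r>0$, the map $(\omega,x)\mapsto\mu(\omega,B(x,r))$ is $\mathcal{A}\otimes\B(U)$-measurable on the region where the balls sit compactly inside $U$, using a Carath\'eodory argument. For fixed $x$, the map $\omega\mapsto\mu(\omega,B(x,r))$ is measurable since $B(x,r)$ is a relatively compact Borel set and $\mu$ is a random Radon measure. For fixed $\omega$, the map $x\mapsto\mu(\omega,B(x,r))$ is continuous: if $x_k\to x$ then $\Ind_{B(x_k,r)}\to\Ind_{B(x,r)}$ pointwise off the sphere $\partial B(x,r)$, a $\leb$-negligible and hence $|\mu|(\omega,\cdot)$-negligible set because $\mu(\omega,\cdot)\ll\leb$; dominating by $\Ind_L$ for a fixed compact $L\subset U$ containing all the balls $B(x_k,r)$, on which $|\mu|(\omega,L)<\infty$ by the Radon property, the dominated convergence theorem for $|\mu|(\omega,\cdot)$ gives $\mu(\omega,B(x_k,r))\to\mu(\omega,B(x,r))$. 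A map measurable in one variable and continuous in the other is jointly measurable. To accommodate points near $\partial U$, where a fixed-radius ball may protrude and carry infinite mass, I would localise: on the increasing open sets $V_m=\{x\in U:\overline{B(x,r_m)}\subset U\}$, which exhaust $U$, one has for $n\ge m$ that $\overline{B(x,r_n)}\subset U$, so the above applies to each such radius and $f=\limsup_{n\ge m}$ is jointly measurable on $\Omega\times V_m$; since the $V_m$ cover $U$, $f$ is jointly measurable on $\Omega\times U$. The denominators $\leb(B(x,r_n))$ are deterministic constants, so they cause no difficulty.

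It then remains to check, for each fixed $\omega$, that $f(\omega,\cdot)$ is a Radon-Nikodym derivative of $\mu(\omega,\cdot)$. The classical Radon-Nikodym theorem furnishes a density $g_\omega\in L^1_{\loc}(U)$ with $\mu(\omega,\cdot)=g_\omega\,\leb$ (for a signed measure one splits into the Jordan components $\mu^{\pm}$, each absolutely continuous). For every $x\in U$ and all $n$ large enough $B(x,r_n)\subset U$, so the quotient equals $\leb(B(x,r_n))^{-1}\int_{B(x,r_n)}g_\omega\,d\leb$, which tends to $g_\omega(x)$ at every Lebesgue point of $g_\omega$, hence for $\leb$-almost every $x$. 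Therefore $f(\omega,x)=g_\omega(x)$ for $\leb$-almost every $x$, so $f(\omega,\cdot)$ differs from $g_\omega$ only on a null set and is itself a density of $\mu(\omega,\cdot)$.

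The main obstacle is precisely the joint measurability of the ball-evaluation map, and within it the spatial continuity coupled with the localisation needed to cope with the open domain $U$ and the possible unboundedness of the total mass near $\partial U$; everything downstream is a fibrewise invocation of a classical differentiation theorem and presents no genuine difficulty.
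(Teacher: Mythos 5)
Your proposal is correct and follows essentially the same route as the paper: both define $f$ as a $\limsup$ of normalised ball averages $\mu(\omega,B(x,r_n))/\leb(B(x,r_n))$, establish joint measurability by the Carath\'eodory argument (measurability in $\omega$ from the definition of a random Radon measure, continuity in $x$ via dominated convergence using that spheres are $\mu(\omega,\cdot)$-negligible by absolute continuity), and then identify $f(\omega,\cdot)$ with a density fibrewise by a differentiation theorem. The only cosmetic differences are that the paper invokes the Besicovitch derivation theorem where you use Radon--Nikodym plus Lebesgue differentiation, and that the paper reduces to $U=\R^d$ by extension by zero where you localise on the exhaustion $V_m$ (your localisation is, if anything, the more careful treatment of the boundary issue).
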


\begin{proof}
This proof follows the outline of~\cite[Exercise 6.10.72]{Bogachev_measure_theoryII_2007}.
It is enough to consider the case $U=\R^d$, since for $U\subset\R^d$ one can always extend the random measure by zero over $\R^d$ and take the restriction of $f$ to $U$ afterwards.
Denote by $B(x,r)$ the open ball of center $x$ and radius $r$, and by $\kappa_d$ the Lebesgue measure of the unit ball of $\R^d$, so that for all $x\in \R^d$ and $r>0$, $\leb(B(x,r)) = \kappa_d r^d$.
For any $\omega\in\Omega$,
according to the Besicovitch derivation theorem (see e.g.~\cite[Theorem 2.22]{Ambrosio_Fusco_Pallara_functions_of_bounded_variation_free_discontinuity_problems_2000}),
the derivative of the measure $\mu(\omega,\cdot)$ with respect to $\leb$, that is
$$
\lim_{r\rightarrow 0+} \frac{\mu(\omega,B(x,r))}{\kappa_d r^d},\quad x\in \R^d,
$$
exists for $\leb$-almost all $x\in \R^d$, is in $L^1(\R^d)$, and is a Radon-Nikodym derivative of the measure $\mu(\omega,\cdot)$.
Let $(r_n)_{n\in\N}$ be a positive sequence decreasing to $0$.
For all $\omega\in\Omega$, $x\in \R^d$, and $n\in\N$, define
$$
f_n(\omega,x) = \frac{\mu(\omega,B(x,r_n))}{\kappa_d r_n^d}.
$$
As a consequence of the Besicovitch derivation theorem, for all $\omega\in \Omega$,
the function
$$
f(x,\omega) = \limsup_{n\rightarrow +\infty}f_n(\omega,x) \Ind\left(\limsup_{n\rightarrow +\infty}f_n(\omega,x) = \liminf_{n\rightarrow +\infty}f_n(\omega,x)\right)
$$
is a Radon-Nikodym derivative of $\mu(\omega,\cdot)$ with respect to the Lebesgue measure $\leb$.
Let us show that this function $f$ is jointly measurable, \emph{i.e.} $\mathcal{A} \otimes \mathcal{B}\left(U\right)$-measurable.
Given the definition of $f$, and since the $\limsup$ and $\liminf$ of a countable sequence of measurable functions is a measurable function, it is enough to show that the functions $f_n$ are jointly measurable.
Let $n\in\N$.
For all $x\in\R^d$, by definition of a random Radon measure,
the map $\omega\mapsto f_n(\omega,x) = \frac{\mu(\omega,B(x,r_n))}{\kappa_d r_n^d}$ is $\mathcal{A}$-measurable.
Let us show that for all $\omega\in\Omega$, the map $x\mapsto f_n(\omega,x) = \frac{\mu(\omega,B(x,r_n))}{\kappa_d r_n^d}$
is continuous over $\R^d$.
Indeed, let $x\in\R^d$ and $(x_k)_{k\in\N}$ a sequence of points that tends to $x$.
Then, for all $y\in\R^d\setminus\partial B(x,r_n)$,
$\Ind(y\in B(x_k,r_n))$ tends to $\Ind(y\in B(x,r_n))$.
Since the sphere $\partial B(x,r_n)$ is Lebesgue negligible, by absolute continuity, $\partial B(x,r_n)$ is also $\mu(\omega,\cdot)$-negligible.
Hence, $\Ind(y\in B(x_k,r_n))$ tends to $\Ind(y\in B(x,r_n))$ for $\mu(\omega,\cdot)$-almost all $y\in\R^d$.
Besides, since the sequence $(x_k)$ tends to $x$, it is bounded, and thus there exists $R>0$ such that $x_k\in B(0,R)$ for all $k\in\N$.
Then, for all $k\in\N$,
$$
\left|\frac{\Ind(y\in B(x_k,r_n))}{\kappa_d r_n^d}\right|
\leq \frac{\Ind(y\in B(0,R+r_n))}{\kappa_d r_n^d} \in L^1(\mu(\omega,\cdot)).
$$
Hence, by dominated convergence,
$$
\lim_{k\rightarrow +\infty} \frac{\mu(\omega,B(x_k ,r_n))}{\kappa_d r_n^d} = \frac{\mu(\omega,B(x,r_n))}{\kappa_d r_n^d},
$$
that is $f_n(\omega,\cdot)$ is continuous at $x$.
In conclusion, $\omega\mapsto f_n(\omega,x)$ is measurable and $x\mapsto f_n(\omega,x)$ is continuous, \emph{i.e.} $f_n$ is a Carath\'eodory function.
Since $\R^d$ is a separable metric space, one can conclude that $f_n$ is jointly measurable~\cite[Section 4.10]{Aliprantis_Border_infinite_dimensional_analysis_2007}.
\end{proof}

%
%
%
%
\bibliographystyle{apt}

\begin{thebibliography}{}

\end{thebibliography}


\begin{thebibliography}{10}

\bibitem{Aliprantis_Border_infinite_dimensional_analysis_2007}
{\sc Aliprantis, C. and Border, K.} (2007).
\newblock {\em Infinite Dimensional Analysis: A Hitchhiker's Guide} third~ed.
\newblock Springer-Verlin, Berlin.

\bibitem{Ambrosio_Fusco_Pallara_functions_of_bounded_variation_free_discontinuity_problems_2000}
{\sc Ambrosio, L., Fusco, N. and Pallara, D.} (2000).
\newblock {\em {F}unctions of {B}ounded {V}ariation and {F}ree {D}iscontinuity
  {P}roblems}.
\newblock Oxford mathematical monographs. Oxford {U}niversity {P}ress.

\bibitem{Bogachev_measure_theoryII_2007}
{\sc Bogachev, V.~I.} (2007).
\newblock {\em Measure {T}heory} vol.~II.
\newblock Springer-Verlag, Berlin.

\bibitem{Caselles_et_al_convex_calibrable_sets_anisotropic_norms_2008}
{\sc Caselles, V., Chambolle, A., Moll, S. and Novaga, M.} (2008).
\newblock A characterization of convex calibrable sets in $\mathbb{R}^{N}$ with
  respect to anisotropic norms.
\newblock {\em Ann. Inst. H. Poincar{\'e} Anal. Non Lin{\'e}aire\/} {\bf 25,}
  803 -- 832.

\bibitem{ChiDel99}
{\sc Chil{\`e}s, J. and Delfiner, P.} (1999).
\newblock {\em Modeling {S}patial {U}ncertainty}.
\newblock John Wiley \& sons, Hoboken, New Jersey.

\bibitem{Daley_Vere_Jones_introduction_theory_point_process_II_2008}
{\sc Daley, D.~J. and Vere-Jones, D.} (2008).
\newblock {\em An Introduction to the {T}heory of {P}oint {P}rocesses, {V}olume
  I: {G}eneral {T}heory and {S}tructure} second~ed.
\newblock Probability and its applications. Springer.

\bibitem{DezLau97}
{\sc Deza, M. and Laurent, M.} (1997).
\newblock {\em Geometry of {C}uts and {M}etrics}.
\newblock Springer-Verlag, Berlin.

\bibitem{Eme10}
{\sc Emery, X.} (2010).
\newblock On the {E}xistence of {M}osaic and {I}ndicator {R}andom {F}ields with
  {S}pherical, {C}ircular, and {T}riangular {V}ariograms.
\newblock {\em Math. Geosci.\/} {\bf 42,} 969--984.

\bibitem{Evans_Gariepy_measure_theory_and_fine_properties_of_functions_1992}
{\sc Evans, L.~C. and Gariepy, R.~F.} (1992).
\newblock {\em Measure {T}heory and {F}ine {P}roperties of {F}unctions}.
\newblock Studies in advanced mathematics. CRC Press, LLC.

\bibitem{FriCha13}
{\sc Fritz, T. and Chaves, R.} (2013).
\newblock Entropic {I}nequalities and {M}arginal {P}roblems.
\newblock {\em IEEE Trans. Inform. Theory\/} {\bf 59,} 803--817.

\bibitem{Galerne_computation_perimeter_covariogram_2011}
{\sc Galerne, B.} (2011).
\newblock Computation of the {P}erimeter of {M}easurable sets via their
  {C}ovariogram. {A}pplications to {R}andom {S}ets.
\newblock {\em Image Anal. Stereol.\/} {\bf 30,} 39--51.

\bibitem{Galerne2014_rfbv}
{\sc Galerne, B.} (2014).
\newblock Random {F}ields of {B}ounded {V}ariation and {C}omputation of their
  {V}ariation {I}ntensity.
\newblock {\em Technical report} 2014-25.
\newblock MAP5.

\bibitem{Himmelberg_measurable_relations_1975}
{\sc Himmelberg, C.} (1975).
\newblock Measurable relations.
\newblock {\em Fund. math\/} {\bf 87,} 53--72.

\bibitem{Hirsch_Lacombe_elements_functional_analysis_1999}
{\sc Hirsch, F. and Lacombe, G.} (1999).
\newblock {\em Elements of {F}unctional {A}nalysis} vol.~192 of {\em Graduate
  texts in mathematics}.
\newblock Springer, New-York.

\bibitem{JiaStiTor07}
{\sc Jiao, Y., Stillinger, F.~H. and Torquato, S.} (2007).
\newblock Modeling heterogeneous materials via two-point correlation functions:
  Basic principles.
\newblock {\em Phys. Rev. E\/} {\bf 76,} 031110.

\bibitem{Kallenberg_random_measures_1986}
{\sc Kallenberg, O.} (1986).
\newblock {\em Random {M}easures} fourth~ed.
\newblock Akademie-Verlag, Berlin.

\bibitem{KunLebSpe11}
{\sc Kuna, T., Lebowitz, J. and Speer, E.~R.} (2011).
\newblock Necessary and sufficient conditions for realizability of point
  processes.
\newblock {\em Ann. Appl. Probab.\/} {\bf 21,} 1253--1281.

\bibitem{Lac13b}
{\sc Lachi{{\`e}}ze-Rey, R.} (2013).
\newblock Realisability conditions for second order marginals of biphased
  media.
\newblock {\em Random Struct. Algor.\/} DOI: 10.1002/rsa.20546.

\bibitem{LacMol}
{\sc Lachi\`eze-Rey, R. and Molchanov, I.} (2015).
\newblock Regularity conditions in the realisability problem in applications to
  point processes and random closed sets.
\newblock {\em Ann. Appl. Probab.\/} {\bf 25,} 116--149.

\bibitem{Lantuejoul_geostatistical_simulation_2002}
{\sc Lantu{\'e}joul, C.} (2002).
\newblock {\em Geostatistical {S}imulation: {M}odels and {A}lgorithms}.
\newblock Springer, Berlin.

\bibitem{Mas72}
{\sc Masry, E.} (1972).
\newblock On {C}ovariance {F}unctions of {U}nit {P}rocesses.
\newblock {\em SIAM J. Appl. Math.\/} {\bf 23,} 28--33.

\bibitem{Matheron_random_sets_and_integral_geometry_1975}
{\sc Matheron, G.} (1975).
\newblock {\em Random {S}ets and {I}ntegral {G}eometry}.
\newblock Wiley series in probability and mathematical statistics. John Wiley
  $\&$ Sons.

\bibitem{Mat93}
{\sc Matheron, G.} (1993).
\newblock Une conjecture sur la covariance d'un ensemble al\'eatoire.
\newblock {\em Cahiers de g\'eostatistiques, Fascicule 3, Compte-rendu des
  journ{\'e}es de g\'eostatistique, 25-26 mai 1993, Fontainebleau\/} 107--113.

\bibitem{McM55}
{\sc McMillan, B.} (1955).
\newblock History of a problem.
\newblock {\em J. Soc. Ind. Appl. Math.\/} {\bf 3,} 119--128.

\bibitem{Molchanov_theory_random_sets_2005}
{\sc Molchanov, I.} (2005).
\newblock {\em Theory of {R}andom {S}ets}.
\newblock Probability and Its Applications. Springer, London.

\bibitem{Qui08}
{\sc Quintanilla, J.~A.} (2008).
\newblock Necessary and sufficient conditions for the two-point probability
  function of two-phase random media.
\newblock {\em Proc. Roy. Soc. A\/} {\bf 464,} 1761--1779.

\bibitem{Rataj_random_sets_of_finite_perimeter_2014}
{\sc Rataj, J.} (2014).
\newblock Random sets of finite perimeter.
\newblock {\em Math. Nachr.\/} {\bf 1-10,} DOI 10.002/mana.201300341.

\bibitem{Schneider_Weil_stochastic_and_integral_geometry_2008}
{\sc Schneider, R. and Weil, W.} (2008).
\newblock {\em Stochastic and {I}ntegral {G}eometry}.
\newblock Probability and Its Applications. Springer-Verlag, Berlin.

\bibitem{She63}
{\sc Shepp, L.~A.} (1963).
\newblock On positive definite functions associated with certain stochastic
  processes.
\newblock {\em Technical report}.
\newblock Bell Laboratories, Murray Hill.

\bibitem{Straka_Stepan_random_sets_in_01_1987}
{\sc Straka, F. and {\v{S}}t\v{e}p\'{a}n, J.} (1987).
\newblock Random {S}ets in [0,1].
\newblock In {\em Transactions of the Tenth Prague Conference on Information
  Theory, Statistical Decision Functions, Random Processes}. ed. Academia.
\newblock vol.~10A-B. Springer, Netherlands pp.~349--356.

\bibitem{Tor02}
{\sc Torquato, S.} (2001).
\newblock {\em Random {H}eterogeneous {M}aterials: Microstructure and
  {M}acroscopic {P}roperties}.
\newblock Springer, New York.

\end{thebibliography}

 \end{document}